\documentclass[11pt]{amsart} \usepackage[latin1]{inputenc}
\usepackage{amsmath,amsthm, amscd, amssymb, amsfonts}
\usepackage{enumerate}
\usepackage{hyperref}

\newtheorem{teor}{Theorem}[section] \newtheorem{corol}[teor]{Corollary} \newtheorem{prop}[teor]{Proposition} \newtheorem{lem}[teor]{Lemma}

\theoremstyle{definition} \newtheorem{defin}[teor]{Definition}  \newtheorem{ejem}[teor]{Example}

\theoremstyle{remark} \newtheorem{obs}[teor]{Remark}\newtheorem{nota}[teor]{Remark}

\DeclareMathOperator*{\sedi}{\rtimes}
\DeclareMathOperator*{\sset}{\subseteq}
\DeclareMathOperator*{\accion}{\rightharpoonup}
\DeclareMathOperator*{\acciond}{\leftharpoonup}

\newcommand\M{\mathcal{M}}

\newcommand\g{\gamma}
\newcommand\s{\sigma}\newcommand \End{\operatorname{End}}
  
\newcommand\co{\operatorname{co}}  
  
 \newcommand\Hom{\operatorname{Hom}} 
 \newcommand\Ind{\operatorname{Ind}}\newcommand\Aut{\operatorname{Aut}}
\newcommand\Gal{\operatorname{Gal}}

\newcommand\rad{\operatorname{rad}}
\newcommand\can{\operatorname{can}}

\begin{document}

\title[On the classification of Galois objects for finite groups]{On the classification of Galois objects for finite groups} \author{C\'esar Galindo and Manuel Medina} \address{Departamento de Matem\'aticas, Pontificia Universidad Javeriana, Bogot\'a, Colombia} \email{cesar.galindo@javeriana.edu.co, mjmedinal@bt.unal.edu.co} \thanks{This work was
partially supported by the Project 003568 of the Department of Mathematics, Pontificia Universidad Javeriana} \subjclass{16W30} \date{June 18, 2010}

\begin{abstract}
We classify Galois objects for the dual of a group algebra of a finite group over an arbitrary field.
\end{abstract}

\maketitle \section{Introduction}

Let $H$ be a Hopf algebra. A right $H$-comodule algebra $A$ is called a right $H$-Galois object if the map can:$A\otimes A\to A\otimes H, a\otimes b\mapsto ab_{(0)}\otimes b_{(1)}$ is bijective. When the Hopf algebra $H$ is the dual of a group algebra of a finite group, then Galois objects are the $G$-Galois extensions of non-commutative rings introduced by Chase, Harrison and Rosenberg \cite{chaseSwiddler}.
\medbreak
The Galois objects are very interesting because have a categorial interpretation, and they can be used for the construction of new Hopf algebras. Let $^H\M$ be the category of finite dimensional left $H$-comodules. A fiber functor $F: ^H\M\to $Vec$_k$ is an exact and faithful monoidal functor that commutes with arbitrary colimits. Ulbrich defined in \cite{Ulbrich} a fiber functor $F_A$ associated with each $H$-Galois object $A$, in the form $F_A(V ) = A\square_HV$, where $A\square_HV$ is the cotensor product over $H$ of the right $H$-comodule $A$ and the left $H$-comodule $V$. In \emph{loc. cit.} was defined a bijective correspondence between isomorphism classes of $H$-Galois objects and isomorphism classes of fiber functors of $^H\M$.
\medbreak
Given a Hopf algebra $H$ and a left $H$-Galois object $A$, there is a new Hopf algebra associated $L(A,H)$, see \cite{Scha96}. The Hopf algebra $L(A,H)$ is the Tannakian-Krein reconstruction from the fiber functor associated to $A$, \cite[Theorem 5.5]{Scha96}.
\medbreak

It is well known that the classification of $H^*$-Galois objects for a finite dimensional Hopf algebra $H$, is equivalent to the classification of twist in $H$, see
\cite[Section 4]{Twisting}. In \cite{Mov}, Movshev classified the twist for complex group algebras, so this implies the classification of Galois objects for the dual of a complex group algebra of a finite group.
\medbreak
Throughout this article we work over an arbitrary field $k$, and we will denote by $k^G$ the dual of the group algebra for a finite group $G$.
\medbreak

The aim of this paper is to classify $k^G$-Galois objects, generalizing the classification of Movshev \cite{Mov}, and Davydov \cite{Davy}.
\medbreak

To formulate our main result we first describe a Galois datum associated to a finite group $G$ and a field $k$.

\begin{defin}\label{defini intro}
A\emph{Galois datum associated to $k^G$} is a collection $(S,K,N,\s,\gamma)$ such that
\begin{enumerate}[$i)$]
\item $S$ is a subgroup of $G$ and $N$ is a normal subgroup of $S$.
\item $K\supseteq k$ is a Galois extension with Galois group $S/N$.
\item char$(k)\nshortmid |N|$.
\item $\s:N\times N\to K^*$ is a non-degenerate 2-cocycle.
\item $\gamma:S\times N\to K^*$  satisfies the equations \eqref{condicion 1 gamma}, \eqref{condicion 2 gamma}, and \eqref{condicion 3 gamma}.
\end{enumerate}
\end{defin}

A 2-cocycle $\sigma\in Z^2(N,K^*)$ is called non-degenerate if $\s(s, t) = \s(t, s)$ for all $t \in C_N(s)$ implies $s=1$. Equivalently,  $\s$ is non-degenerate if the center of the  twisted group algebra  $K_\sigma N$ coincides with $K$. Also, by Theorem of Maschke for twisted group algebras \cite[Theorem 2.10, pag 85]{karpi}, if char$(k)\nshortmid |N|$ and $\sigma$ is non-degenerate then $K_\sigma N$ is a central simple algebra over $K$.
\medbreak
The function $\gamma$ is univocally determined by  a Hochschild 1-cocycle $\gamma\in HZ^1(S, C^1(N,K^*))$, where $C^1(N,K^*)$ is an $S$-bimodule with actions given by the equations \eqref{ecuacion accion de C^1 a derecha}, and \eqref{ecuacion accion de C^1 a izquierda}, see Section \ref{seccion obstruccion}. In the Section  \ref{seccion obstruccion}  we define some cohomological obstructions which establish necessary conditions for the existence of $\gamma$.
\medbreak
Let $(S,K,N,\s,\gamma)$ be a Galois datum associated to $k^G$. The group $S$ acts over $K$ through the canonical projection to $\text{Gal}(K|k)=S/N$. We will denote by $A(K_\sigma N,\gamma)$ the twisted group algebra $K_\sigma N$ with $S$-action defined by
\begin{equation*}
g\accion \alpha u_x= \bar{g}(\alpha)\gamma(g,x)u_{\-^gx},
\end{equation*}
for $g\in S$, $x\in N$, and $\alpha\in K$.
\medbreak
In the Subsection \ref{subsection algebra inducida}, we study the induction functor from the category of $S$-algebra to the category of $G$-algebras. Using the induction functor we define the $G$-algebra $\Ind_S^G(A(K_\sigma N,\gamma))$.
\medbreak
Now we can formulate our main result.

\begin{teor}\label{main result}
Let $G$ be a finite group and let $k$ be a field.

\begin{enumerate}[i)]
  \item Let $(S,K,N,\s,\gamma)$ be a Galois datum associated to $k^G$. Then the $G$-algebra $\Ind_S^G(A(K_\sigma N,\gamma))$ is a $k^G$-Galois object.
  \item Let $A$ be a $k^G$-Galois object. Then $A\simeq \Ind_S^G(A(K_\sigma N,\gamma))$ for a Galois datum $(S,K,N,\s,\gamma)$.
\end{enumerate}
\end{teor}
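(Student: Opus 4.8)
The plan is to prove both parts by reducing along the induction functor of Subsection~\ref{subsection algebra inducida} to the ``connected'' case and then invoking the structure theory of central simple algebras that are Galois over their center. I would use freely that a $k^G$-Galois object is the same as a $G$-Galois extension $A\supseteq k$ in the sense of Chase--Harrison--Rosenberg, and in particular that such an $A$ is a separable --- hence semisimple --- $k$-algebra with $\dim_k A=|G|$. The bridge I would first isolate as a lemma is: for $S\le G$ and an $S$-algebra $B$, the $G$-algebra $\Ind_S^G(B)$ is a $k^G$-Galois object \emph{if and only if} $B$ is a $k^S$-Galois object; and, conversely, every $k^G$-Galois object $A$ has the form $\Ind_S^G(eA)$ with $e$ a primitive central idempotent of $A$ and $S=\mathrm{Stab}_G(e)$ --- the last point because $A$ semisimple together with $A^G=k$ forces $G$ to permute the primitive central idempotents transitively.

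For part (i), by the bridge lemma it is enough to show that $B:=A(K_\sigma N,\gamma)$ is an $S$-Galois extension of $k$. First, $\dim_k B=|N|\,[K:k]=|N|\,|S/N|=|S|$. Next, condition~$iii)$ with the non-degeneracy of $\sigma$ makes $K_\sigma N$ a central simple $K$-algebra (Maschke for twisted group algebras), and condition~\eqref{condicion 3 gamma} says that the $N$-part of the $S$-action is the inner action by the units $u_x$; hence $B$, over $K$ with its $N$-action, is $K_\sigma N$ with conjugation action, which is an $N$-Galois extension of its center --- the canonical map is assembled from the $u_x$ and is invertible exactly because $\sigma$ is non-degenerate, equivalently $Z(K_\sigma N)=K$, which also yields $B^N=K$. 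Since $K\supseteq k$ is $(S/N)$-Galois by $ii)$ and conditions~\eqref{condicion 1 gamma}--\eqref{condicion 2 gamma} are precisely what make $g\accion\alpha u_x=\bar g(\alpha)\gamma(g,x)u_{{}^gx}$ an action of $S$ by $k$-algebra automorphisms lying over the Galois action on $K$, a ``tower of Galois extensions'' argument gives that $B$ is $S$-Galois over $k$ (so $B^S=K^{S/N}=k$), and applying $\Ind_S^G$ concludes (i).

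For part (ii), start with a $k^G$-Galois object $A$; it is semisimple and $G$ acts transitively on its primitive central idempotents, so fixing $e$ with stabilizer $S$ we get $A\cong\Ind_S^G(B)$ with $B:=eA$ a $k^S$-Galois object and $K:=Z(B)$ a field. Set $N:=\ker\bigl(S\to\Aut_k(K)\bigr)$. By the fundamental theorem of Galois theory for noncommutative rings, $B/B^N$ is $N$-Galois and $B^N/k$ is $(S/N)$-Galois; since $K\subseteq B^N$ and $\dim_k B^N=|S/N|=[K:k]$ we conclude $B^N=K$. Thus $K\supseteq k$ is Galois with group $S/N$, and $B$ is an $N$-Galois extension of its center $K$; as $B$ is separable over $k$ and $K/k$ is separable, $B$ is a central simple $K$-algebra. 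The structure theory of Galois central simple algebras then lets me realize the $N$-action by inner units $u_n\in B^\times$ with $u_nu_m=\sigma(n,m)u_{nm}$, giving $\sigma\in Z^2(N,K^\times)$ and an isomorphism $B\cong K_\sigma N$ under which the $N$-action becomes conjugation; $Z(B)=K$ forces $\sigma$ non-degenerate, and $B=K_\sigma N$ being central simple forces $\mathrm{char}(k)\nmid|N|$. Finally I transport the $S$-action through $B\cong K_\sigma N$: since $S$ acts $k$-linearly over the Galois action on $K$ and normalizes $N$, it sends $\alpha u_x$ to $\bar g(\alpha)\gamma(g,x)u_{{}^gx}$ for a unique $\gamma\colon S\times N\to K^\times$, and expressing that this is an action by algebra automorphisms extending conjugation by the $u_n$ yields exactly \eqref{condicion 1 gamma}, \eqref{condicion 2 gamma} and \eqref{condicion 3 gamma}. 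Hence $(S,K,N,\sigma,\gamma)$ is a Galois datum and $A\cong\Ind_S^G(A(K_\sigma N,\gamma))$.

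I expect the main obstacle to lie in part (ii): choosing the inner units $u_n$ in a sufficiently canonical way (they are determined only up to $K^\times$) so that $\sigma$ and $\gamma$ can be extracted, and then checking that the combined requirements ``$S$-action'', ``by algebra maps'', ``extending the inner $N$-action'' are equivalent to the three cocycle-type identities for $\gamma$; pinning down $\mathrm{char}(k)\nmid|N|$ from the central simple structure belongs here too. On the side of part (i), should the ``tower of Galois extensions'' lemma not be available in exactly the required generality, one would instead verify bijectivity of the canonical map of $A(K_\sigma N,\gamma)$ by hand --- a longer but routine computation driven by the defining identities of $\gamma$ and the non-degeneracy of $\sigma$.
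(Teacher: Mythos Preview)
Your architecture matches the paper's: both parts reduce via the induction bridge (the paper's Proposition~\ref{prop inducida objeto de g} and Theorem~\ref{teor principal cap 2}) to the simple case, and in direction~(i) your ``tower of Galois extensions'' argument is exactly Corollary~\ref{prop teorema takeuchi importante}, an instance of the Schauenburg--Schneider exact-sequence theorem (Theorem~\ref{teor reduccion sucesion}). The genuine difference is how part~(ii) identifies the simple $k^S$-Galois object $B$ with a twisted group algebra. The paper uses the Miyashita--Ulbrich action (Section~\ref{seccion 4}): it produces a \emph{canonical} $S$-grading $B=\bigoplus_s B_s$ with $B_e=K$, and Lemma~\ref{lema equivalencias h en N} (via Takeuchi's bimodule equivalence over central simple algebras) shows that $B_x\neq 0$ precisely for $x\in N$, each such $B_x$ being a one-dimensional $K$-space containing a unit --- so $B\cong K_\sigma N$ with the $u_x$ sitting in canonically defined lines. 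Your Skolem--Noether route (pick units implementing each $n$-action, read off $\sigma$, then dimension-count) also works, but the $K$-linear independence of the chosen $u_n$ is not automatic from Skolem--Noether alone; to secure it you would need either the crossed-product form of $K^N$-Galois objects (Proposition~\ref{semisimple galois} applied over $K$) or precisely the Miyashita--Ulbrich grading --- this is the ``sufficiently canonical choice'' issue you correctly anticipate, and the paper's approach dissolves it. Two minor mislabelings: the condition expressing that the restricted $N$-action is inner is~\eqref{condicion 1 gamma}, not~\eqref{condicion 3 gamma}; and the pair ensuring that the formula defines a group action by algebra automorphisms is~\eqref{condicion 2 gamma}--\eqref{condicion 3 gamma}, not~\eqref{condicion 1 gamma}--\eqref{condicion 2 gamma}.
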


In the Subsection \ref{subseccion datos equivalentes} we establish an equivalence among the Galois data, this equivalent data define isomorphic Galois objects.

\medbreak
The paper is organized as follows: in Section \ref{prels} we give a brief account of
results on Hopf-Galois extensions that will be needed in the sequel. In Section \ref{groups} we show that every $k^G$-Galois object is the induced of a simple Galois object. For the proof we define the concept imprimitive system for $G$-algebras. In Section \ref{seccion 4} we show that every simple $k^G$-Galois object is a twisted group algebra. For this proof we use as mean tool the Miyashita-Ulbrich action. In Section \ref{seccion 5} we define $k^G$-Galois objects associates to some group-theoretical data. In Section \ref{seccion 6} we proof our main result, see Theorem \ref{main result}. Also, we give some non-trivial examples of Galois objects. In Section \ref{seccion obstruccion} we present some cohomological obstruction for the existence of a function $\gamma$, see Definition \ref{defini intro}.


\section{Preliminaries on Hopf Galois extensions}\label{prels}

In this section we review some results on Hopf Galois extensions that we will need later. We refer the reader to \cite{galoissurvey} for a
detailed exposition on the subject.

\begin{defin}\label{defin hopf galois} Let $H$ be a Hopf algebra. Let also $A$ be a right $H$-comodule algebra with structure map $\rho: A\to
A\otimes H$, $\rho(a) = a_{(0)}\otimes a_{(1)}$, and let $B= A^{\co H}$.  The extension $A\supseteq B$ is called a right \emph{Hopf Galois}
extension, or a right $H$-\emph{Galois}  extension if the canonical map $$\can: A\otimes_BA\to A\otimes H, \  \  a\otimes b\mapsto
ab_{(0)}\otimes b_{(1)},$$is bijective.

A right $H$-Galois extension of the base field $k$ will be called a right $H$-\emph{Galois object}. Left $H$-Galois extensions and left
$H$-Galois objects are defined similarly. \end{defin}

\begin{ejem}\label{ejem2}Let the Hopf algebra $H$ coact on itself through the comultiplication $\Delta$. Then the canonical map $H\otimes H \to
H\otimes H$, $x\otimes y\mapsto  xy_{(1)}\otimes y_{(2)}$ is bijective with inverse $x\otimes  y \mapsto  xS(y_{(1)})\otimes y_{(2)}.$
\end{ejem}

\emph{Assume from now on that $H$ is finite dimensional Hopf algebra.} \medbreak

We have a characterization of  $H^*$-Galois objects $A$ in terms of the natural
$H$-action over $A$:

\begin{prop}\label{prop equivalencia galois}
$A$ is a right $H^*$-Galois object if and only if $A$ is finite dimensional and the map $\theta: A\#H\to \End A$, $\theta(a \# h)(b) = a(h\cdot b)$ is an isomorphism.
\end{prop}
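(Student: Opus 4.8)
The plan is to prove both implications using the structure theory of finite-dimensional comodule algebras together with the $H$-module algebra reformulation (recall that a right $H^*$-comodule structure on $A$ is the same as a left $H$-module structure, via $h\cdot a = a_{(0)}\langle h, a_{(1)}\rangle$). First I would set up the smash product $A\# H$ and check that $\theta\colon A\# H\to \End A$, $\theta(a\# h)(b)=a(h\cdot b)$, is always an algebra homomorphism; this is a direct computation with the smash product multiplication $(a\# h)(b\# g) = a(h_{(1)}\cdot b)\# h_{(2)}g$, and I would present it only schematically. Since $\dim A\# H = \dim A\cdot \dim H$ and $\dim \End A = (\dim A)^2$, the map $\theta$ can be an isomorphism only when $\dim A = \dim H$, so finite-dimensionality of $A$ with $\dim A = \dim H$ is built into the hypothesis; I would note this at the outset.

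For the forward direction, assume $A$ is a right $H^*$-Galois object. Then by definition $A$ is finite dimensional (a Galois object over $k$ is finite dimensional whenever $H$ is, since $\can\colon A\otimes A\to A\otimes H^*$ is a bijection of vector spaces), and $\can$ is bijective. The key step is to translate bijectivity of $\can$ into bijectivity of $\theta$. I would do this by identifying $A\otimes A$ with $A\otimes A$ and $A\otimes H^*$ with $A\otimes H^* \cong \Hom(H, A) \cong A\# H$ as vector spaces, and then checking that under these identifications $\can$ becomes (a twist of) $\theta$ followed by the canonical iso $\End A \cong A\otimes A^*$; more precisely, I would exhibit an explicit diagram relating $\can$ to $\theta$ using the left $H$-action $h\cdot b = b_{(0)}\langle h, b_{(1)}\rangle$, so that the matrix coefficients of $\theta(a\# h)$ read off the components of $\can(a\otimes b)$. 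Once the square commutes with vertical arrows isomorphisms, bijectivity of $\can$ forces bijectivity of $\theta$.

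For the converse, assume $A$ is finite dimensional and $\theta$ is an isomorphism; then from $\dim A\# H = (\dim A)^2$ we get $\dim A = \dim H$, and running the same commutative diagram backwards shows $\can$ is bijective, hence $A$ is an $H^*$-Galois object. The main obstacle is the bookkeeping in the commutative diagram: one must be careful about which copy of $A$ carries the comodule structure in the definition of $\can$ versus which carries the module structure used to define $\theta$, and about the natural pairing $A\otimes H^* \cong A\# H$ (this uses finite-dimensionality of $H$ essentially, since $(A\otimes H^*)^* \cong A^*\otimes H$). I expect everything else — that $\theta$ is multiplicative, the dimension count, the identification $\Hom(H,A)\cong A\# H$ — to be routine, and I would state those steps and leave the verifications to the reader or relegate them to a short computation.
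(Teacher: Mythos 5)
The paper does not actually prove this proposition---it simply cites \cite[Theorem 8.3.3]{Mont}---so any self-contained argument necessarily goes beyond what is written. Your attempt, however, has a genuine gap at its central step: the claimed commutative square relating $\can$ to $\theta$ ``with vertical arrows isomorphisms'' does not exist. Both maps are curryings of the \emph{same} trilinear map $T\colon A\otimes A\otimes H\to A$, $T(a,b,h)=a(h\cdot b)=ab_{(0)}\langle h,b_{(1)}\rangle$: the map $\can$ groups the slots $(a,b)$ against $(h,\mathrm{output})$, giving $A\otimes A\to \Hom(H,A)\cong A\otimes H^*$, while $\theta$ groups $(a,h)$ against $(b,\mathrm{output})$, giving $A\otimes H\to\Hom(A,A)$. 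These are two different flattenings of one $4$-tensor, and for a general $4$-tensor invertibility of one flattening says nothing about invertibility of another; there is no natural identification $A\otimes A\cong A\# H$ or $A\otimes H^*\cong\End A$ making a square commute, precisely because the two maps re-bracket the variables differently. So ``running the diagram'' in either direction is not available. The actual equivalence requires real Hopf-algebraic input: the standard proofs use a nonzero integral $t\in H$ to set up the Morita context between $A\# H$ and $A^{H}$ (with connecting map $a\otimes b\mapsto (a\# t)(b\#1)$), or equivalently the Kreimer--Takeuchi theorem; it is the integral and the antipode that let one pass between the two curryings.

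Two further points. Your justification that a Galois object over $k$ is finite dimensional (``since $\can$ is a bijection of vector spaces'') is not an argument: if $\dim_kA$ were infinite, $A\otimes A$ and $A\otimes H^*$ would still have equal dimension as abstract vector spaces. Finite dimensionality again comes from Kreimer--Takeuchi (surjectivity of $\can$ forces $A$ to be finitely generated projective over $A^{\co H^*}=k$), not from a cardinality count. And in the converse direction you must also verify $A^{\co H^*}=A^{H}=k$, since being a Galois \emph{object} means being a Galois extension of the base field itself; this does follow when $\theta$ is an isomorphism, via $A^{H}\cong\End_{A\# H}(A)\cong\End_{\End_k(A)}(A)=k$, but it is absent from your argument.
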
 \begin{proof}
See \cite[8.3.3 Theorem]{Mont}.
\end{proof}

\begin{prop}\label{semisimple galois}
Every right $H$-Galois object is isomorphic to a crossed product $H^{\sigma} = k\#_{\sigma}H$, where
$\sigma:H\otimes H\to k$ is an invertible 2-cocycle. Moreover, if $H$ is semisimple then  $H^{\sigma}$ is semisimple.
\end{prop}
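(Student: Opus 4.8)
The plan is to show that $A$ is a \emph{cleft} $H$-comodule algebra and then to invoke the Doi--Takeuchi identification of cleft extensions with crossed products. First one needs that $A$ is finite dimensional: since $k$ is a field, $A$ is faithfully flat over $B=A^{\co H}=k$, so the Kreimer--Takeuchi theorem (applicable because $H$ is finite dimensional) gives that $A$ is a finitely generated projective $k$-module, and comparing dimensions in the isomorphism $\can\colon A\otimes A\to A\otimes H$ yields $\dim_k A=\dim_k H<\infty$. (Once finite dimensionality is available one may also read this off Proposition~\ref{prop equivalencia galois}.)

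The core step is to establish the \emph{normal basis property}, i.e. $A\cong H$ as right $H$-comodules; together with $A^{\co H}=k$ this is exactly the hypothesis needed in the Doi--Takeuchi theorem. Being a Galois object, $A$ is an injective right $H$-comodule (see \cite{galoissurvey}). Under the isomorphism of categories $\M^H\cong {}_{H^*}\M$, injectivity becomes projectivity because $H^*$ is a Frobenius algebra, and $H$ itself is an injective cogenerator there. Writing the regular module as $H^*\cong\bigoplus_i P_i^{\,d_i}$ and $A\cong\bigoplus_i P_i^{\,c_i}$ with $\{P_i\}$ the indecomposable projectives (so $\sum_i c_i\dim_k P_i=\dim_k A=\dim_k H=\sum_i d_i\dim_k P_i$), one compares $\dim\Hom_{H^*}(A,S_i)$ and $\dim\Hom_{H^*}(S_i,A)$ for the simple modules $S_i$; using that $A$ is a comodule \emph{algebra} with $A^{\co H}=k$ (so the trivial comodule occurs in $A$ with the expected multiplicity) together with the Nakayama permutation of the Frobenius algebra $H^*$, this forces $c_i=d_i$ for all $i$, hence $A\cong H$ in $\M^H$.

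Granting Galois plus normal basis, the Doi--Takeuchi description of cleft extensions (see \cite{galoissurvey}, \cite{Mont}) produces an isomorphism of $H$-comodule algebras $A\cong k\#_\sigma H=H^{\sigma}$ for an invertible 2-cocycle $\sigma\colon H\otimes H\to k$. For the last assertion, if $H$ is semisimple then the crossed product $k\#_\sigma H$ of the semisimple algebra $k$ by the semisimple Hopf algebra $H$ is again semisimple, by the Maschke-type theorem for crossed products.

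The delicate point is the normal-basis step: upgrading ``$A$ is a direct summand of a power of $H$'' to ``$A\cong H$'', equivalently constructing a cleaving map. An alternative to the module-theoretic bookkeeping above is to argue directly with injectivity of $A$ in $\M^H$: the comodule monomorphism $A\hookrightarrow A\otimes H$ coming from $\can$ splits in $\M^H$, and from an $H$-comodule retraction one extracts an $H$-comodule map $j\colon H\to A$; that $j$ is convolution invertible is then forced by $\dim_k A=\dim_k H<\infty$ and $A^{\co H}=k$, which is the source of the cocycle $\sigma$.
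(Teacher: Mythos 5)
Your overall strategy (a finite-dimensional Galois object of the base field is cleft, hence a crossed product by Doi--Takeuchi, and Maschke for crossed products gives semisimplicity) is a correct and standard route, and it is genuinely different from the one the paper takes. However, as written the central step --- the normal basis property $A\cong H$ as right $H$-comodules --- is not actually proved. From your setup you know that $A$ and $H^*$ are projective $H^*$-modules with $\sum_i c_i\dim_k P_i=\sum_i d_i\dim_k P_i$, and the extra input you invoke ($A^{\co H}=k$, i.e.\ the trivial comodule occurs once in the socle of $A$, just as in $H$) supplies essentially one more scalar equation; together with the Nakayama permutation this is nowhere near enough to force $c_i=d_i$ for every $i$, and you do not say how the bookkeeping closes. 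The ``alternative'' in your last paragraph has the same problem: extracting a comodule map $j\colon H\to A$ from a splitting is easy, but its convolution invertibility is exactly the hard point and does not follow from $\dim_k A=\dim_k H$ and $A^{\co H}=k$ alone.

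The gap is easy to close, and the cleanest way uses the canonical map itself: $\can\colon A\otimes A\to A\otimes H$ is an isomorphism of right $H$-comodules when the source carries the coaction on the second tensor factor and the target carries $\id\otimes\Delta$; hence $A^{\oplus n}\cong H^{\oplus n}$ as $H$-comodules with $n=\dim_k A$, and Krull--Schmidt for finite-dimensional $H^*$-modules gives $A\cong H$. With normal basis in hand, Galois plus normal basis is equivalent to cleft, which yields $A\cong k\#_\sigma H$ by the Doi--Takeuchi theorem in \cite{Mont}, and the Maschke-type theorem \cite[Theorem 7.4.2]{Mont} gives the semisimplicity claim exactly as you say. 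For comparison, the paper's proof is two lines: it observes that $A\#H^*\simeq\End A$ (Proposition \ref{prop equivalencia galois}) is simple artinian and quotes \cite[Proposition 8.3.6]{Mont}, which converts this directly into the crossed-product presentation; your argument in effect reproves the general cleftness theorem that this citation encapsulates, which buys self-containedness at the cost of the delicate step you flagged.
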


On the other hand, $H^{\sigma}$ is a right $H$-Galois object for all such $\sigma$.

\begin{proof}
Let $A$ be a right $H$-Galois object. Since $A \# H^* \simeq \End A$ is simple artinian, thus by  \cite[Proposition 8.3.6]{Mont} there is $\sigma: H\otimes H\to k$ an invertible 2-cocycle such that $$A\simeq k\#_{\sigma}H.$$
 Now by \cite[Theorem 7.4.2]{Mont}, if $H$ is semisimple and
finite dimensional then $A\simeq k\#_{\sigma}H$ is semisimple.
\end{proof}

\begin{obs}
It follows by the Proposition \ref{semisimple galois}, that for an $H$-Galois object $A$, $\dim_k(A)=\dim_k H$.
\end{obs}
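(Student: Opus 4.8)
The plan is to read off the equality directly from the crossed-product normal form supplied by Proposition~\ref{semisimple galois}. That proposition asserts that every right $H$-Galois object $A$ is isomorphic to a crossed product $H^{\sigma}=k\#_{\sigma}H$ for some invertible $2$-cocycle $\sigma\colon H\otimes H\to k$. Crucially, this isomorphism is an isomorphism of $H$-comodule algebras, and in particular it is an isomorphism of $k$-vector spaces, so it preserves $k$-dimension. Hence the first step is simply to invoke Proposition~\ref{semisimple galois} to replace $A$ by $k\#_{\sigma}H$ without loss of generality for the purpose of computing $\dim_k A$.

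The second step is to recall the underlying vector-space structure of the crossed product. By definition, a crossed product $B\#_{\sigma}H$ has underlying $k$-vector space $B\otimes_k H$, the cocycle $\sigma$ entering only through the (twisted, possibly non-associative before the cocycle conditions are imposed) multiplication and not through the module structure. Specializing to the base ring $B=k$, the crossed product $k\#_{\sigma}H$ has underlying vector space $k\otimes_k H\cong H$. Therefore
\begin{equation*}
\dim_k A=\dim_k\bigl(k\#_{\sigma}H\bigr)=\dim_k\bigl(k\otimes_k H\bigr)=\dim_k H,
\end{equation*}
which is exactly the claimed identity.

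There is essentially no hard step here; the only point deserving care is the observation that the normal form of Proposition~\ref{semisimple galois} is a $k$-linear isomorphism, so that the dimension is genuinely an isomorphism invariant, together with the bookkeeping that the twisting cocycle leaves the underlying module $k\otimes_k H$ unchanged. As a consistency check (and an alternative route that does not pass through the crossed-product description), one may instead use Proposition~\ref{prop equivalencia galois}: for a right $H^{*}$-Galois object the map $\theta\colon A\#H\to\End A$ is an isomorphism, so $\dim_k A\cdot\dim_k H=\dim_k(A\#H)=\dim_k(\End A)=(\dim_k A)^{2}$, and cancelling the nonzero finite factor $\dim_k A$ again yields $\dim_k A=\dim_k H$. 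Either argument closes the remark.
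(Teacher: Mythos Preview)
Your proposal is correct and matches the paper's approach exactly: the remark in the paper has no separate proof body, merely asserting that the dimension equality follows from Proposition~\ref{semisimple galois}, and your argument spells out the implicit reasoning (that $A\simeq k\#_\sigma H$ has underlying vector space $k\otimes_k H\cong H$). The alternative route you sketch via Proposition~\ref{prop equivalencia galois} is a nice consistency check but is not needed.
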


\subsection{Miyashita-Ulbrich action}\label{miy-ul}
Let $A \supseteq B$ be an $H$-Galois extension. The Miyashita-Ulbrich action of $H$ on the centralizer $A^B$ of $B$ in $A$, makes $A^B$ into a commutative
algebra in $\mathcal{YD}_H^H$ the category of right Yetter-Drinfeld modules.

\begin{defin} Let $H$ be a Hopf algebra, and $A$ an $H$-Galois extension of $B$. The Miyashita-Ulbrich action of $H$ on $A^B$ is defined by
$a\triangleleft h = h^{[1]}ah^{[2]}$, $a\in A^B$, $h\in H$, where $\can(h^{[1]}\otimes h^{[2]}) = 1\otimes h$. \end{defin}

The Miyashita-Ulbrich action of $H$ on the $H$-Galois object $A$ is characterized as the unique map $A\otimes H\to A$, $a\otimes h\mapsto
a\triangleleft h$, such that $ab=b_{(0)}(a\triangleleft b_{(1)})$, for all $a, b \in A$.

\begin{ejem}
Consider the $H$-Galois object $A=H$ as in Example \ref{ejem2}. In this case, the Miyashita-Ulbrich action coincides with the right adjoint
action of $H$ on itself: $a\triangleleft h = \mathcal S(h_{(1)})ah_{(2)}$, $a, h \in H$.

Therefore a Hopf subalgebra $H'\subseteq  H$ is a normal Hopf subalgebra if and only if it is stable
under the Miyashita-Ulbrich action.
\end{ejem}

The next theorem will be useful to reduce the classification of $H$-Galois object  in the case that we have an exact succession $k\to K\to H\to Q\to k$ of Hopf algebras.

\begin{teor}\label{teor reduccion sucesion}
Let $H$ be a finite dimensional Hopf algebra, $K\subseteq H$ a normal Hopf subalgebra, and $Q=H/K^+H$ the quotient Hopf algebra. Let $\delta:A\to A\otimes H$ be an $H$-comodule algebra. Then the following are equivalent:
\begin{enumerate}[i)]
  \item $A$ is an $H$-Galois objet.
  \item $A^{co Q}$ is a $K$-Galois object, and $A^{co Q}\subseteq A$ is a faithfully flat $Q$-Galois extension.
\end{enumerate}
\end{teor}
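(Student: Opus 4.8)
The plan is to prove the equivalence by exploiting the faithfully flat descent built into the exact sequence $k\to K\to H\to Q\to k$, treating $A^{\co Q}$ as the "Galois object for the normal part" $K$ and $A$ as a relative $Q$-Galois extension over it. First I would fix notation: write $B=A^{\co Q}$, note that since $K\subseteq H$ is a normal Hopf subalgebra, $Q=H/K^+H$ is a Hopf algebra and $A$ is canonically a right $Q$-comodule algebra via the composite $A\xrightarrow{\delta}A\otimes H\to A\otimes Q$; by the standard coinvariants computation $B$ is a right $K$-comodule algebra (the $H$-coaction on elements of $B$ lands in $A\otimes K$, using normality of $K$), and $B^{\co K}=A^{\co H}=k$, so $B$ is at least a candidate $K$-Galois object over $k$. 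The real content is to relate the single canonical map $\can_{H}\colon A\otimes A\to A\otimes H$ to the two canonical maps $\can_{K}\colon B\otimes B\to B\otimes K$ and $\can_{Q}\colon A\otimes_{B}A\to A\otimes Q$.

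The direction $(ii)\Rightarrow(i)$ I would do first, as it is the "gluing" direction. Assume $B$ is a $K$-Galois object and $B\subseteq A$ is a faithfully flat $Q$-Galois extension. One builds a two-step inverse (equivalently a translation map) for $\can_H$: given $h\in H$, lift its image $\bar h\in Q$ through $\can_Q^{-1}$ to get elements $h^{[1]}\otimes_B h^{[2]}\in A\otimes_B A$ with $h^{[1]}h^{[2]}_{(0)}\otimes \overline{h^{[2]}_{(1)}}=1\otimes\bar h$; the "error" $h^{[1]}h^{[2]}_{(0)}\otimes h^{[2]}_{(1)}$ then differs from $1\otimes h$ by an element of $A\otimes K$-type data over $B$, which is corrected using $\can_K^{-1}$ since $B$ is $K$-Galois. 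Filling this in amounts to the standard fact that Hopf-Galois extensions compose along faithfully flat towers: if $B/k$ is $K$-Galois and $A/B$ is faithfully flat $Q$-Galois, then $A/k$ is $H$-Galois. I would either cite this from \cite{galoissurvey} or spell out the exact sequence of canonical maps
\begin{equation*}
B\otimes B\otimes_{B}A \longrightarrow A\otimes_{B}A \longrightarrow A\otimes A,
\end{equation*}
showing $\can_H$ is obtained by composing/base-changing $\can_K$ and $\can_Q$, each of which is bijective, with faithful flatness of $A/B$ ensuring bijectivity passes through the tensor products. Conversely, for $(i)\Rightarrow(ii)$, assume $A$ is $H$-Galois. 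Restricting $\can_H$ and using that $K$ is a normal Hopf subalgebra one checks $\can_H$ carries $B\otimes B$ into $A\otimes K$ and in fact into $B\otimes K$ after identifying coinvariants; a diagram chase (or a faithful-flatness argument over $B$) shows the restricted map $B\otimes B\to B\otimes K$ is bijective, so $B$ is $K$-Galois over $B^{\co K}=k$. For the $Q$-Galois extension statement one observes that surjectivity of $\can_H$ forces surjectivity of $\can_Q\colon A\otimes_B A\to A\otimes Q$, and then injectivity plus a counting/faithful-flatness argument (here finite-dimensionality of $H$ from the standing assumption is convenient, as is Proposition \ref{semisimple galois} giving $\dim_k A=\dim_k H$, $\dim_k B=\dim_k K$) gives bijectivity of $\can_Q$; faithful flatness of $B\subseteq A$ then follows from general structure theory of Hopf-Galois extensions with $\can$ bijective, since over a field such extensions are automatically faithfully flat once the comodule algebra is, e.g., a crossed product.

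The main obstacle I expect is the bookkeeping around $A\otimes_B A$ versus $A\otimes_k A$ and ensuring that the $B$-balanced tensor product interacts correctly with the coactions — specifically, verifying that the translation map for $H$ really is the composite of the translation maps for $K$ and $Q$, and that faithful flatness of $A$ over $B$ (which one gets for free in one direction but must establish in the other) is enough to descend bijectivity. A secondary subtlety is checking that $B=A^{\co Q}$ genuinely carries a $K$-comodule-algebra structure with trivial coinvariants; this uses normality of $K$ in an essential way (so that the $H$-coaction restricted to $B$ factors through $A\otimes K$ and lands in $B\otimes K$), and I would isolate this as a preliminary lemma before the main chase.
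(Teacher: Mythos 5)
The paper does not actually prove this theorem: its entire proof is the citation ``See \cite[Theorem 4.5.1]{galoissurvey}'', so there is no in-paper argument to measure yours against. Judged on its own, your sketch correctly identifies the architecture of the Schauenburg--Schneider proof (relate $\can_H$ to $\can_K$ on $B=A^{\co Q}$ and to $\can_Q$ on $A\otimes_B A$), but the steps you defer are exactly the ones carrying the mathematical weight, and several of them are genuine gaps as written. In $(ii)\Rightarrow(i)$, the statement you propose to cite --- ``if $B/k$ is $K$-Galois and $A/B$ is a faithfully flat $Q$-Galois extension then $A/k$ is $H$-Galois'' --- \emph{is} direction $(ii)\Rightarrow(i)$ of the theorem, so invoking it is a restatement, not a proof. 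Moreover the displayed sequence $B\otimes B\otimes_{B}A \to A\otimes_{B}A \to A\otimes A$ does not make sense as a factorization of canonical maps: the natural comparison map runs $A\otimes A\to A\otimes_B A$, and the actual argument must identify the kernel of $A\otimes H\to A\otimes Q$ with the image under $\can_H$ of the kernel of $A\otimes A\to A\otimes_B A$; that identification is the crux and is absent from your outline.

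In $(i)\Rightarrow(ii)$ you treat as bookkeeping three facts that are theorems. First, that $\delta(A^{\co Q})\subseteq A^{\co Q}\otimes K$ (not merely $A\otimes K$, and with $H^{\co Q}=K$) requires faithful flatness of $H$ over $K$ --- Nichols--Zoeller in the finite-dimensional case --- and is not a routine chase, as you partly acknowledge. Second, faithful flatness of $A$ over $B=A^{\co Q}$ is Schneider's theorem on faithfully flat Galois extensions; your justification ``over a field such extensions are automatically faithfully flat once the comodule algebra is, e.g., a crossed product'' begs the question, since a crossed-product decomposition of $A$ \emph{over $B$} is not among your hypotheses. Third, the dimension count $\dim_k B=\dim_k K$ extracted from Proposition \ref{semisimple galois} is circular: that proposition applies to Galois objects, and whether $B$ is a $K$-Galois object is precisely what you are trying to prove at that point. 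To make the argument self-contained you would need either to reproduce Schneider's descent arguments, or, exploiting finite-dimensionality throughout, to convert both Galois conditions into smash-product statements via Proposition \ref{prop equivalencia galois} together with Nichols--Zoeller freeness of $H$ over $K$; otherwise the honest move is the one the paper makes, namely to cite \cite[Theorem 4.5.1]{galoissurvey}.
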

\begin{proof}
See \cite[Theorem 4.5.1]{galoissurvey}
\end{proof}
In the case that $H=k^G$ for a finite group $G$, and $A$ is a simple algebra we have:

\begin{corol}\label{prop teorema takeuchi importante}
Let $A$ be a $k^G$-comodule algebra, where $A$ is a simple algebra with center $K$. Then $A$ is a $k^G$-Galois object if and only if $A$ is a $K^N$-Galois object  $(\text{where} \ N=\{x\in G| x\cdot \alpha=\alpha, \forall \alpha\in K\})$ and $K$ is a $k^{G/N}$-Galois object.
\end{corol}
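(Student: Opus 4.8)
The strategy is to obtain this as a direct application of Theorem \ref{teor reduccion sucesion} with $H = k^G$. First I would set up the exact sequence of Hopf algebras associated to the subgroup $N$. Since $A$ is a simple algebra with center $K$, and $A$ carries a $k^G$-comodule algebra structure, $G$ acts on $A$ by algebra automorphisms; this action restricts to an action on the center $K$, and $N = \{x \in G \mid x\cdot\alpha = \alpha \ \forall\alpha\in K\}$ is precisely the kernel of the induced map $G \to \Aut_k(K)$. The key structural observation is that $N$ is a normal subgroup of $G$: for $x \in N$, $g \in G$, and $\alpha \in K$ one checks $gxg^{-1}\cdot\alpha = g\cdot(x\cdot(g^{-1}\cdot\alpha)) = g\cdot(g^{-1}\cdot\alpha) = \alpha$, using that $g^{-1}\cdot\alpha$ again lies in the $G$-stable subalgebra $K$. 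Dually, $k^{G/N} \subseteq k^G$ is a normal Hopf subalgebra with quotient Hopf algebra $k^N$; so I have the exact sequence $k \to k^{G/N} \to k^G \to k^N \to k$.

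Next I would identify the two ingredients appearing in Theorem \ref{teor reduccion sucesion}ii) in concrete terms. Writing $H = k^G$, $K_{\mathrm{Hopf}} = k^{G/N}$ (normal Hopf subalgebra) and $Q = k^N$, the theorem says $A$ is $k^G$-Galois iff $A^{\co k^N}$ is a $k^{G/N}$-Galois object and $A^{\co k^N} \subseteq A$ is a faithfully flat $k^N$-Galois extension. The main computational point is to show $A^{\co k^N} = K$, the center of $A$. The $k^N$-comodule structure on $A$ corresponds to the $N$-action (restricted from the $G$-action via $G \twoheadrightarrow N$, using the coalgebra grading $A = \bigoplus_{g\in G} A_g$), and $A^{\co k^N}$ is the subalgebra of elements fixed by $N$. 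Since $N$ acts on $A$ with $K$ (the center) as its fixed subalgebra — this is where non-degeneracy/simplicity enters: an inner-by-$K_\sigma N$ type argument shows the $N$-fixed points of a simple $G$-graded algebra with these properties are exactly the center — I get $A^{\co k^N} = K$. Then $A^{\co k^N} \subseteq A$ being a faithfully flat $k^N$-Galois extension becomes the statement that $K \subseteq A$ is a $K^N$-Galois extension (one must be slightly careful: $k^N$-Galois over $K$ versus $K^N = (k^N)^* = $ the relevant base; since everything is finite-dimensional, faithful flatness is automatic).

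I then translate "$A^{\co k^N}$ is a $k^{G/N}$-Galois object" into "$K$ is a $k^{G/N}$-Galois object", and "$A^{\co k^N} \subseteq A$ is a faithfully flat $k^N$-Galois extension" into "$A$ is a $K^N$-Galois object" — here I am using that $A^{\co Q} = K$ so a $Q$-Galois extension of $K$ with $Q = k^N$ is the same as $A$ being Galois over its center for the relevant structure, and the phrase $K^N$-Galois object in the statement refers to $A$ as an algebra over $K$ being Galois for the dual of the group ring of $N$. Putting the two equivalences together with Theorem \ref{teor reduccion sucesion} yields the claim in both directions.

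The main obstacle I anticipate is the identification $A^{\co k^N} = K$. One inclusion, $K \subseteq A^{\co k^N}$, is immediate since $N$ fixes the center pointwise and the center of a $G$-graded algebra that happens to be simple will sit in degree... actually even that needs care — I must check $K \subseteq A_e$, i.e. the center is homogeneous of trivial degree, which follows because for central $\alpha$ the relation $\alpha b = b\alpha$ together with the grading forces $\alpha$ into $A_e$ when $A$ is simple (or one invokes that the center of a simple comodule algebra is a subcomodule on which the comodule structure is trivial). The reverse inclusion $A^{\co k^N} \subseteq K$ is the substantive part: I would argue that $A^{\co k^N}$ is a simple subalgebra (being the fixed ring of a "Galois-type" action) whose centralizer relations force it into the center, or more directly invoke the double-centralizer / crossed-product description of $A$ that makes $A \cong (A^{\co k^N})\#_\sigma k^N$-like and read off that $N$ acts with fixed points exactly the center. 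This is the step where the hypotheses "$A$ simple" and the interplay with $\car(k) \nmid |N|$ (needed so the extension is separable/faithfully flat in the right way) genuinely get used.
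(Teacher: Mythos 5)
Your overall strategy coincides with the paper's: both apply Theorem \ref{teor reduccion sucesion} to the exact sequence $k\to k^{G/N}\to k^G\to k^N\to k$ and reduce everything to the identification $A^{\co\, k^N}=A^N=K$. Two corrections before the main point. The inclusion $K\subseteq A^N$ needs none of the care you give it: the relevant structure is the $G$-\emph{action} (the $k^G$-coaction is $\rho(a)=\sum_g g\accion a\otimes t_g$, not a grading), and $N$ is \emph{defined} as the subgroup acting trivially on $K$, so $K\subseteq A^N$ is tautological; the Miyashita--Ulbrich grading and the question whether $K\subseteq A_e$ are irrelevant here. Also, $\car(k)\nmid |N|$ is not a hypothesis of this corollary and cannot be invoked; in the paper it is \emph{deduced} later (Lemma \ref{lema K_sigma N galois entoces char no divide orden}).

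The genuine gap is the reverse inclusion $A^N\subseteq K$, which you correctly identify as the substantive point but do not prove: you offer two alternative gestures (a centralizer argument, or a crossed-product description of $A$ with inner $N$-action), neither carried out, and the second would in any case lean on the structure theory of Section \ref{seccion 4}, which the paper develops only after this corollary. The paper's own argument is short and elementary, and you should supply it or an equivalent: since $A$ is $k^G$-Galois, $A^G=k$, hence $K^{G/N}=k$; the action of $G/N$ on $K$ is faithful by the definition of $N$, so Artin's theorem gives $[K:k]\geq |G/N|$. On the other hand, Theorem \ref{teor reduccion sucesion} already tells you that $A^N$ is a $k^{G/N}$-Galois object, so $\dim_k A^N=|G/N|$. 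Combined with $K\subseteq A^N$, this forces $A^N=K$. Without this step (or a completed substitute), the translation of condition (ii) of Theorem \ref{teor reduccion sucesion} into the statement of the corollary is not justified.
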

\begin{proof}
Suppose that $A$ is a $k^G$-Galois object. If we show that $A^N=K$, the necessity follows by Theorem \ref{teor reduccion sucesion}.

Since $G$ acts over $A$ and $K=\mathcal{Z}(A)$, then we have a homomorphism $\pi: G\to \Aut_k(K)$, such that $\ker\pi= N$. Thus $\pi$ induces an injective homomorphism  $\widetilde{\pi}: G/N\to \Aut_k(K)$. Now, $A^G=k$ implies $K^{G/N}=k$. Hence by \cite[Theorem 13]{Artin} we have $[K:k]\geq |G/N|$.

By Theorem \ref{teor reduccion sucesion} $A^N$ is a $k^{G/N}$-Galois object, so $\text{dim}_kA^N= |G/N|$. Since $K\subseteq A^N$, and $[K:k]\geq |G/N|$, thus $A^N=K$.

The Sufficiency follows by Theorem \ref{teor reduccion sucesion}.

\end{proof}
\section{Galois extensions of finite groups}\label{groups}

Let $G$ be a finite group, and let $A$ be a $G$-algebra with $G$-action $kG \otimes A \to A$, $g \otimes a \mapsto g\accion a$.
Equivalently, $A$ is a $k^G$-comodule algebra with respect to the coaction $\rho: A \to A \otimes k^G$ defined by $$\rho(a) = \sum_{g \in G} g\accion a
\otimes t_g,$$ where $(t_g)_{g \in G}$ denotes the basis of $k^G$ consisting of the canonical idempotents $t_g(h) =
\delta_{g, h}$ (Kronecker's delta).

The next proposition is a restatement of \cite[Proposition 3.1]{Davy} when $k$ is not an algebraically closed field.
\begin{prop}\label{caracterizacion A.Ga=ideales G-inv}
Let $A$ be a $G$-algebra. Then $A$ is a $k^G$-Galois object if and only if $A$ satisfies the following conditions:
\begin{enumerate}[i)]
  \item $\dim(A)=|G|$. 
  \item $A$ has no no-trivial $G$-invariant left ideals.
  \item $A^G=k$.
\end{enumerate}
\end{prop}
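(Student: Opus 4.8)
The plan is to use Proposition~\ref{prop equivalencia galois}, which says that $A$ is a $k^G$-Galois object if and only if $\dim_k A = |G|$ and the map $\theta\colon A\#kG\to\End_k A$, $\theta(a\#g)(b)=a(g\accion b)$, is an isomorphism. So the task reduces to translating the bijectivity of $\theta$ (under the dimension hypothesis, both sides have dimension $|G|^2$, so it suffices to check that $\theta$ is injective or that it is surjective) into conditions ii) and iii).

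First I would establish the ``only if'' direction. Assume $A$ is a $k^G$-Galois object. Condition i) is the remark after Proposition~\ref{semisimple galois}. For iii): $A^G=A^{\co k^G}=k$ because a Galois object is by definition a Galois extension of the base field $k$. For ii): suppose $I\subsetneq A$ is a proper $G$-invariant left ideal with $I\neq 0$. Then $A\#kG$ acts on the quotient module $A/I$ (the $G$-invariance of $I$ makes $A/I$ an $A\#kG$-module), and this action is nonzero but not faithful, since any $a\#g$ with $a\in I$... more carefully: I would argue that $\theta$ cannot be injective, because the left ideal $I$ gives a proper nonzero submodule of the regular-type module $A$, contradicting that $A\#kG\cong\End_k A$ is simple and $A$ would be its unique (up to iso) simple module of dimension $|G|$; a proper nonzero $G$-invariant left ideal exhibits $A$ as a non-simple $A\#kG$-module, impossible if $A\#kG\cong\End_k(A)$ since then the only modules are powers of the simple module of dimension $\dim A$. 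Hence ii) holds.

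For the ``if'' direction, assume i), ii), iii). I want to show $\theta$ is an isomorphism; since $\dim_k(A\#kG)=|G|^2=\dim_k\End_k(A)$ by i), it is enough to show $\theta$ is injective, equivalently that $A$ is a faithful $A\#kG$-module, or surjective. The natural route: view $A$ as a module over $B:=A\#kG$. Condition ii) says $A$ is a simple $B$-module (a $B$-submodule of $A$ is exactly a $G$-invariant left ideal of $A$). So $\theta$ factors through the simple quotient $B/\mathrm{ann}(A)$, and $B/\mathrm{ann}(A)$ embeds in $\End_D(A)$ where $D=\End_B(A)$ is a division algebra by Schur; one shows $D$ acts as scalars using iii). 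Indeed $\End_B(A)$ consists of the $A$-linear, $G$-equivariant endomorphisms of $A$; an $A$-linear endomorphism of the left $A$-module $A$ is right multiplication by an element $c\in A$, and $G$-equivariance forces $c\in A^G=k$ by iii). Hence $D=k$, $B/\mathrm{ann}(A)\hookrightarrow\End_k(A)$, and comparing dimensions with i) forces $\mathrm{ann}(A)=0$ and $\theta$ an isomorphism. Then Proposition~\ref{prop equivalencia galois} gives that $A$ is a $k^G$-Galois object.

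The main obstacle I anticipate is the bookkeeping in identifying $\mathrm{End}_B(A)$ with $A^G$: one must be careful that $\End_B(A)$ means endomorphisms commuting with both the left $A$-multiplication and the $G$-action, check that the left-$A$-linear endomorphisms of $A$ are precisely right multiplications $r_c$, and then verify that $r_c$ commutes with the $G$-action if and only if $g\accion c = c$ for all $g$ (using that $G$ acts by algebra automorphisms), so that $\End_B(A)\cong A^G = k$. Everything else is either the cited Proposition~\ref{prop equivalencia galois}, the elementary observation that $B$-submodules of $A$ are $G$-stable left ideals, or a dimension count; the only genuinely non-formal point is the Schur-type argument, which needs $k=A^G$ to conclude the commutant is trivial.
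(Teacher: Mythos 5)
Your proof is correct, and in the substantive direction (conditions i)--iii) imply Galois) it is essentially the paper's own argument: condition ii) makes $A$ a simple $A\# kG$-module, condition iii) identifies the commutant $D=\End_{A\# kG}(A)$ with $A^G=k$ (via right multiplications, exactly the point the paper leaves implicit when it writes $\End_D(A)=\End_{A^G}(A)=\End_k(A)$), Jacobson density gives surjectivity of $\theta$, and the dimension count from i) gives bijectivity, so Proposition \ref{prop equivalencia galois} applies. The only genuine divergence is in the forward direction for ii): the paper invokes the descent theorem \cite[Theorem 2.3.9]{galoissurvey}, by which $A\otimes(A/J)^G\to A/J$ is an isomorphism for any quotient by a $G$-invariant ideal $J$, forcing $A/J=0$ when $J\neq 0$; you instead note that $A\# kG\simeq\End_k(A)$ is a simple algebra whose only modules are sums of the standard module of dimension $\dim A=|G|$, so $A$ itself must be that simple module and admits no proper nonzero $G$-stable left ideal. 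Both are valid; yours is the more elementary and self-contained route once Proposition \ref{prop equivalencia galois} is in hand, while the paper's citation works for general Hopf--Galois extensions rather than just the finite-dimensional matrix-algebra situation.
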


\begin{proof}
Let $J$ be a $G$-invariant left ideal of $A$. By \cite[Theorem 2.3.9]{galoissurvey} the map $A\otimes (A/J)^G\to A/J,\ a\otimes m\mapsto a\cdot m$ is an isomorphism. Now,  if $J\neq 0$, then $A/J=0$, so $A=J$.\\
Conversely, we can considerer $A$ as a left $A\sedi G$-module with structure map $\theta$. By $(ii)$, $A$ is a simple $A\sedi G$-module. Let $D=\End_{A\sedi G}(A)$. Applying the Jacobson Density Theorem (see \cite[F20, pag 139]{Lorenz}), we have that natural homomorphism
\[
\theta:A\sedi G\to \End_{D}(A)=\End_{A^G}(A)=\End_k(A)
\]
is surjective, then by $(i)$ $\theta$ is bijective.
\end{proof}

\begin{nota}\label{1 diferencia con Davydov}
The condition $A^G=k$ in Proposition \ref{caracterizacion A.Ga=ideales G-inv} is not necessary if $k$ is algebraically closed.
\end{nota}
The following corollary is an alternative proof of the Proposition \ref{semisimple galois} for the case $H=k^G$.
\begin{corol}\label{algebra de galois semisimple}
Every $k^G$-Galois object is a semisimple algebra.
\end{corol}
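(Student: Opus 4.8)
The plan is to combine the characterization of $k^G$-Galois objects in Proposition~\ref{caracterizacion A.Ga=ideales G-inv} with the elementary fact that the Jacobson radical is preserved by algebra automorphisms. So let $A$ be a $k^G$-Galois object, viewed as a $G$-algebra with action $g\otimes a\mapsto g\accion a$, and consider $\rad(A)$.

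First I would note that $\rad(A)$ is a two-sided ideal of $A$, hence in particular a left ideal. Since $\dim_k A=|G|<\infty$ by Proposition~\ref{caracterizacion A.Ga=ideales G-inv}(i), $\rad(A)$ admits an intrinsic description (the intersection of the maximal left ideals, or the largest nilpotent ideal), and therefore every $k$-algebra automorphism of $A$ maps $\rad(A)$ onto itself. As $G$ acts on $A$ by $k$-algebra automorphisms, this gives $g\accion \rad(A)=\rad(A)$ for all $g\in G$, i.e. $\rad(A)$ is a $G$-invariant left ideal.

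Then I would invoke condition (ii) of Proposition~\ref{caracterizacion A.Ga=ideales G-inv}: $A$ has no non-trivial $G$-invariant left ideals, so $\rad(A)$ is either $0$ or $A$. But $A$ is a nonzero unital finite dimensional algebra, and its radical is nilpotent while $1$ is not, so $\rad(A)\neq A$; hence $\rad(A)=0$ and $A$ is semisimple. There is essentially no obstacle here — the only point that needs a word of justification is that the $G$-action genuinely stabilizes $\rad(A)$, which is immediate from the action being by automorphisms. It is worth remarking that this is strictly stronger than what Proposition~\ref{semisimple galois} yields, since the argument makes no hypothesis on $\operatorname{char}(k)$ and applies even when $\operatorname{char}(k)$ divides $|G|$, a case in which $k^G$ itself fails to be semisimple.
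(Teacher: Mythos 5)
Your argument is exactly the paper's proof: $\rad(A)$ is a $G$-invariant left ideal, so by Proposition~\ref{caracterizacion A.Ga=ideales G-inv}(ii) it is $0$ or $A$, and it cannot be $A$; the extra justification you give (automorphisms preserve the radical, the radical is nilpotent) is correct and just spells out what the paper leaves implicit. One small correction to your closing aside: $k^G$ is a product of copies of $k$ and hence is semisimple as an algebra in every characteristic --- it is the group algebra $kG$ whose semisimplicity fails when $\operatorname{char}(k)$ divides $|G|$ --- though your broader point that the argument needs no hypothesis on $\operatorname{char}(k)$ stands.
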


\begin{proof}
Let $A$ be a $k^G$-Galois object. Since the Jacobson radical $\rad(A)$ of the algebra $A$ is a left $G$-invariant ideal, we conclude that $\rad(A)=0.$
\end{proof}

\subsection{Imprimitive algebras and induced algebras}\label{subsection algebra inducida}

In this subsection we introduce the notion of imprimitive algebra.

\begin{defin}
Let $A$ be a $G$-algebra and $\{e_1,\ldots, e_n\}$ a set of orthogonal central idempotents in $A$, such that $1_A= e_1+\cdots + e_n$. We will say that $\{e_1,\ldots, e_n\}$ is an \emph{imprimitive system} if  $g\accion e_i\in \{e_1,\ldots, e_n\}$ for all $g\in G, i=1,\ldots,n$.

A $G$-algebra is called \emph{imprimitive} if there is a non-trivial imprimitive system in $A$.
\end{defin}

Let $S$ be a subgroup of $G$ and $(B,\cdot)$ an $S$-algebra. The vector space $kG\otimes_{kS}B$ with $G$-action and multiplication defined by:
\begin{align*}
    g\accion (h\otimes x)&= gh\otimes x\\
    (g\otimes x)(h\otimes y)&=\begin{cases}
    h\otimes (h^{-1}g\cdot x)y &\text{ if } h^{-1}g\in S\\
    0  & \text{ if } h^{-1}g\notin S,
    \end{cases}
\end{align*}
is a $G$-algebra for $g,h\in G$ and $x,y\in B$

Also, considerer the algebra of functions
\[
A_S(G,B)=\{r:G\to B|r(sg)=s\cdot r(g) \quad \forall s\in S,g\in G\},
\]
which is a $G$-algebra with action defined by $(g\accion r)(x)=r(xg)$.

\begin{lem}
$A_S(G,B)\simeq kG\otimes_{kS}B$ as $G$-algebras.
\end{lem}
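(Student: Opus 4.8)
The plan is to exhibit an explicit isomorphism $\Phi\colon kG\otimes_{kS}B\to A_S(G,B)$ and to check it respects both the $G$-action and the multiplication. First I would fix a set $\{g_1,\dots,g_n\}$ of representatives of the right cosets $S\backslash G$, so that every element of $kG\otimes_{kS}B$ has a unique expression $\sum_i g_i\otimes x_i$ with $x_i\in B$; this is what makes the source a free $B$-module of rank $[G:S]$, matching $\dim A_S(G,B)$. The natural map to write down is the one sending a simple tensor $g\otimes x$ to the function $r_{g,x}$ supported on the coset $Sg^{-1}$, namely $r_{g,x}(h)= (hg\cdot x)$ if $hg\in S$ and $0$ otherwise. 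One checks first that $r_{g,x}$ genuinely lies in $A_S(G,B)$, i.e. $r_{g,x}(sh)=s\cdot r_{g,x}(h)$, which is immediate from $shg\in S\iff hg\in S$ and from $B$ being an $S$-algebra; and that $r_{sg,x}=r_{g,s^{-1}\cdot x}$, so the assignment factors through the tensor product over $kS$ and defines $\Phi$.

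Next I would verify $\Phi$ is $G$-equivariant: $(g'\accion r_{g,x})(h)=r_{g,x}(hg')=r_{g'g,x}(h)$ after unwinding the definitions, and $g'\accion(g\otimes x)=g'g\otimes x$, so the two sides agree. Then I would check multiplicativity. Evaluating $\Phi(g\otimes x)\Phi(h\otimes y)=r_{g,x}\,r_{h,y}$ pointwise: $(r_{g,x}r_{h,y})(u)=r_{g,x}(u)r_{h,y}(u)$, which is nonzero only when both $ug\in S$ and $uh\in S$, equivalently $u\in Sg^{-1}\cap Sh^{-1}$, which is nonempty exactly when $Sg^{-1}=Sh^{-1}$, i.e. $h^{-1}g\in S$. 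In that case $(r_{g,x}r_{h,y})(u)=(ug\cdot x)(uh\cdot y)=(uh\cdot((h^{-1}g)\cdot x))(uh\cdot y)=uh\cdot\big(((h^{-1}g)\cdot x)\,y\big)$, which is precisely $r_{h,\,(h^{-1}g\cdot x)y}(u)$; and this matches $\Phi$ applied to the product $(g\otimes x)(h\otimes y)=h\otimes(h^{-1}g\cdot x)y$ when $h^{-1}g\in S$, and to $0$ otherwise. I would also note $\Phi$ sends $1=\sum_i g_i\otimes 1_B$ (a decomposition of the unit, using that $\sum_i r_{g_i,1_B}$ is the constant function $1_B$) to the unit of $A_S(G,B)$.

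Finally, bijectivity: injectivity follows because if $\Phi(\sum_i g_i\otimes x_i)=0$ then, evaluating at each $g_i^{-1}$, only the $i$-th term survives and gives $x_i=0$; surjectivity follows by a dimension count, both sides having dimension $[G:S]\dim B$, or directly by reconstructing a function $r$ as $\sum_i r_{g_i,\,r(g_i^{-1})}$. The main obstacle, such as it is, is purely bookkeeping: keeping the coset conventions consistent between the "$kG\otimes_{kS}B$" side (where the subgroup sits on the left via $sg\otimes x=g\otimes s\cdot x$) and the function side (where the constraint $r(sg)=s\cdot r(g)$ makes functions determined by their values on right-coset representatives $g_i^{-1}$), and making sure the twist by $h^{-1}g$ lands on the correct side of the product in $B$. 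No genuinely hard step is involved once the correct map $\Phi(g\otimes x)=r_{g,x}$ is guessed.
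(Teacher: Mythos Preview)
Your strategy---write down the explicit map $\Phi(g\otimes x)=r_{g,x}$ with $r_{g,x}(h)=(hg)\cdot x$ for $hg\in S$ and $0$ otherwise, then verify $S$-balancedness, $G$-equivariance, multiplicativity, unitality, and bijectivity---is exactly the standard one and works. The paper itself gives no argument here; it simply cites \cite[Proposition~3.3]{GN2}. So your write-up is strictly more informative than the paper's.

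That said, there is one genuine bookkeeping slip, of precisely the kind you anticipated. In $kG\otimes_{kS}B$ the relation is $gs\otimes x=g\otimes(s\cdot x)$ (right $kS$-action on $kG$), not $sg\otimes x=g\otimes(s\cdot x)$ as you wrote at the end. Accordingly the identity needed for well-definedness is $r_{gs,x}=r_{g,\,s\cdot x}$, which is immediate since $hgs\in S\iff hg\in S$ and then $(hgs)\cdot x=(hg)\cdot(s\cdot x)$. The relation you actually checked, $r_{sg,x}=r_{g,\,s^{-1}\cdot x}$, is false in general: the supports $Sg^{-1}s^{-1}$ and $Sg^{-1}$ need not coincide when $S$ is not normal. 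For the same reason, the representatives $g_1,\dots,g_n$ giving the unique decomposition $\sum_i g_i\otimes x_i$ must be representatives of the \emph{left} cosets $G/S$, not of $S\backslash G$. Once you swap these conventions, every subsequent computation you wrote (equivariance, the product formula via $ug=(uh)(h^{-1}g)$, the unit $\sum_i g_i\otimes 1_B$, and the injectivity/surjectivity checks by evaluating at $g_j^{-1}$) goes through verbatim.
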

\begin{proof}
  See \cite[Proposition 3.3]{GN2}.
\end{proof}

\begin{defin}\label{defin algebra inducida}
 Let us denote by $\Ind_S^G(B)$ the $G$-algebra $A_S(G,B)\simeq kG\otimes_{kS}B$ and we will call the \emph{induced algebra} from the $S$-algebra $B$.
\end{defin}

\begin{nota}\label{nota ind is a functor}
 Induction is a covariant functor: where to each homomorphism
of $S$-algebras $f:A\to B$ is send to the homomorphism of $G$-algebras $\Ind_S^G(f):\Ind_S^G(A)\to \Ind_S^G(B)$, $\Ind_S^G(f)(r)=f\circ r$.
\end{nota}

\begin{prop}\label{caracterizacion algebras imprimitivas}
Let $A$ be an imprimitive algebra, such that $G$ acts transitively on the set $\{e_1,\ldots, e_n\}$. If $S$ is the stabilizer of $e_1$ then $A\simeq \Ind_S^G(e_1A)$ as $G$-algebras.
\end{prop}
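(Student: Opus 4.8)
The plan is to establish the $G$-algebra isomorphism $A\simeq \Ind_S^G(e_1A)$ by exhibiting an explicit map and checking it is a bijective $G$-algebra homomorphism. First I would recall that, by transitivity, we may choose coset representatives $g_1=1,g_2,\dots,g_n$ of $G/S$ with $g_i\accion e_1=e_i$; these are well defined precisely because $S=\operatorname{Stab}(e_1)$, and $e_i$ does not depend on the choice of representative up to the $S$-action on $e_1A$. The key observation is that $e_1A$ is an $S$-algebra: $S$ stabilizes $e_1$ (a central idempotent), so $S$ acts on $e_1A=e_1Ae_1$ by restriction of the $G$-action, since $s\accion(e_1 a)=(s\accion e_1)(s\accion a)=e_1(s\accion a)\in e_1A$.

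Next I would define the candidate isomorphism using the function-algebra model $A_S(G,e_1A)$ rather than the tensor-product model, as it tends to be cleaner. Define $\Phi:A\to A_S(G,e_1A)$ by $\Phi(a)(g)=e_1(g\accion a)$ for $g\in G$. I would check: (1) $\Phi(a)$ lies in $A_S(G,e_1A)$, i.e. $\Phi(a)(sg)=s\cdot\Phi(a)(g)$ for $s\in S$, which follows from $e_1(sg\accion a)=(s\accion e_1)(sg\accion a)=s\accion(e_1(g\accion a))$; (2) $\Phi$ is $G$-equivariant, i.e. $\Phi(h\accion a)(g)=e_1(gh\accion a)=\Phi(a)(gh)=(h\accion\Phi(a))(g)$; (3) $\Phi$ is an algebra homomorphism — here one uses that $e_1(g\accion(ab))=e_1(g\accion a)(g\accion b)=e_1(g\accion a)e_1(g\accion b)$, the last equality because $e_1$ is central, so it inserts freely. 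One must also verify $\Phi$ is unital, using $\sum_i e_i=1$ together with the fact that $\{e_i\}$ is a full set of orthogonal central idempotents.

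The main work — and the step I expect to be the real obstacle — is proving bijectivity. For injectivity, suppose $\Phi(a)=0$; then $e_1(g\accion a)=0$ for all $g$, hence $(g^{-1}\accion e_1)a=g^{-1}\accion(e_1(g\accion a))=0$ for all $g$, so $e_i a=0$ for every $i$, and summing gives $a=(\sum_i e_i)a=0$. For surjectivity, the cleanest route is a dimension count: by the Lemma, $A_S(G,e_1A)\cong kG\otimes_{kS}e_1A$ has dimension $[G:S]\dim_k(e_1A)$, and since $A=\bigoplus_i e_iA$ with each $e_iA\cong e_1A$ as vector spaces (the map $a\mapsto g_i\accion a$ restricts to an isomorphism $e_1A\to e_iA$, as $g_i\accion e_1=e_i$), we get $\dim_k A=n\dim_k(e_1A)=[G:S]\dim_k(e_1A)$; an injective linear map between equidimensional finite-dimensional spaces is an isomorphism. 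Alternatively one can write down the inverse directly: given $r\in A_S(G,e_1A)$, set $\Psi(r)=\sum_i g_i\accion r(g_i^{-1})$ — wait, one must be careful with the variance, so more precisely $\Psi(r)=\sum_{i} g_i\accion\bigl(r(g_i)\bigr)$ with $r(g_i)\in e_1A$ reinterpreted, checking this is independent of the choice of representatives using the defining relation of $A_S(G,e_1A)$, and then verifying $\Phi\circ\Psi=\id$ and $\Psi\circ\Phi=\id$; the dimension argument sidesteps these bookkeeping checks, so I would present that.
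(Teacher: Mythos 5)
Your construction is essentially the paper's isomorphism read in the opposite direction: the paper defines $\psi:kG\otimes_{kS}e_1A\to A$, $\sum_i g_i\otimes e_1a_i\mapsto\sum_i e_i(g_i\accion a_i)$, and declares it a well-defined $G$-algebra isomorphism, whereas you work in the function model $A_S(G,e_1A)$ and define the inverse map $\Phi(a)(g)=e_1(g\accion a)$. Your checks (1)--(3), the unitality, and the injectivity argument are all correct, and the function model buys you something real: you avoid the well-definedness check over $\otimes_{kS}$ that the paper's $\psi$ silently requires. The one place where your route is strictly weaker is surjectivity: the dimension count $\dim_k A=n\dim_k(e_1A)=[G:S]\dim_k(e_1A)$ assumes $A$ is finite-dimensional, a hypothesis not present in the statement of the proposition (though it holds in every application in the paper, where $A$ is a Galois object of dimension $|G|$). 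If you want the proposition as stated, you should carry out the explicit inverse you sketch -- which, after sorting out the variance, is exactly the paper's $\psi$: surjectivity also follows directly without dimensions, since for $a\in e_iA$ one has $a=g_i\accion\bigl(g_i^{-1}\accion a\bigr)$ with $g_i^{-1}\accion a\in e_1A$, so $\Phi(a)$ is supported on the single coset corresponding to $e_i$ and every $r\in A_S(G,e_1A)$ is hit by $\sum_i g_i^{-1}\accion r(g_i)$ up to the usual indexing conventions. Either repair is routine; the argument as given is otherwise sound.
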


\begin{proof}
Let $\{g_1,\ldots,g_n\}$ be a set of representatives for the right cosets $G/S$ such that $g_i\accion e_1=e_i$. Then $\Ind_S^G(e_1A)=kG\otimes_{kS}e_1A = \bigoplus_{i=1}^n g_i\otimes e_1A.$

Considerer the map
\begin{align*}
    \psi:kG\otimes_{kS}e_1A&\to A\\
    \sum_{i=1}^ng_i\otimes e_1a_i&\mapsto \sum_{i=1}^ne_i(g_i\accion a_i).
\end{align*}
It is straightforward to check that the map $\psi$ is a well defined  $G$-algebra isomorphism.
\end{proof}

\begin{lem}\label{lema inducida=B y algebra invariantes}
Let $S$ be a subgroup of $G$, and $(B,\cdot)$ be an $S$-algebra. Then there is an algebra isomorphism between the induced algebra $\Ind_S^G(B)$ and $\underbrace{B\times\cdots\times B}_{[G:S]}$. Moreover,
$
(\Ind_S^G(B))^G\simeq B^S.
$
\end{lem}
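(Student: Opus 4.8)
The statement has two parts. For the first, $\Ind_S^G(B) \cong B^{\times [G:S]}$ as algebras, I would use the function-algebra model $A_S(G,B) = \{r : G \to B \mid r(sg) = s \cdot r(g)\}$. Pick a set of representatives $g_1, \ldots, g_m$ for the right cosets $S \backslash G$ (with $m = [G:S]$). Then the evaluation map $r \mapsto (r(g_1), \ldots, r(g_m))$ is well-defined, and I claim it is an algebra isomorphism onto $B \times \cdots \times B$. Injectivity is clear: if $r(g_i) = 0$ for all $i$, then for any $g \in G$ write $g = s g_i$ and $r(g) = s \cdot r(g_i) = 0$. Surjectivity: given $(b_1, \ldots, b_m)$, the formula $r(s g_i) := s \cdot b_i$ is unambiguous once we fix the representatives, hence defines an equivariant function. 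The main thing to check is that this map respects multiplication; but multiplication in $A_S(G,B)$ is \emph{pointwise} (one checks $(r \cdot r')(g) = r(g) r'(g)$ lies in the space, since $s \cdot (r(g) r'(g)) = (s \cdot r(g))(s \cdot r'(g))$ by the $S$-algebra axiom), so evaluation at the $g_i$ is clearly multiplicative. This is the routine part.

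\textbf{The invariants.} For the second part, $(\Ind_S^G(B))^G \cong B^S$, recall the $G$-action on $A_S(G,B)$ is $(g \accion r)(x) = r(xg)$. So $r$ is $G$-invariant iff $r(xg) = r(x)$ for all $x, g \in G$, i.e. iff $r$ is constant on $G$. If $r \equiv b$ is constant, the defining condition $r(sg) = s \cdot r(g)$ forces $b = s \cdot b$ for all $s \in S$, i.e. $b \in B^S$. Conversely any $b \in B^S$ gives a well-defined constant equivariant function. Thus $r \mapsto r(1)$ (equivalently, the first component of the evaluation isomorphism above, restricted to invariants) identifies $(\Ind_S^G(B))^G$ with $B^S$, clearly as algebras since multiplication is pointwise.

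\textbf{Main obstacle.} There is no serious obstacle here — the only thing requiring a moment's care is verifying that the multiplication on $A_S(G,B)$ is genuinely pointwise (equivalently, transporting the convolution-type product on $kG \otimes_{kS} B$ through the isomorphism of the previous lemma gives the componentwise product on $B^{\times m}$), because the product formula on $kG \otimes_{kS} B$ looks twisted at first glance. Once one unwinds that $A_S(G,B)$ carries the pointwise product, both isomorphisms are immediate. I would therefore state the evaluation map $\Phi : A_S(G,B) \to B^{\times m}$, $\Phi(r) = (r(g_1), \ldots, r(g_m))$, verify it is a $G$-equivariant algebra isomorphism (with the $G$-action on the target permuting and twisting coordinates according to the action of $G$ on the cosets $S g_i$), and then read off the $G$-invariants of the target as exactly the diagonal copies of $B^S$.
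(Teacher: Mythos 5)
Your proof is correct and is essentially the paper's argument in different clothing: the paper decomposes $A_S(G,B)$ via the canonical imprimitive system of idempotents $e_i$ (the indicator functions of the cosets $Sg_i$), which is exactly your evaluation map $r\mapsto (r(g_1),\dots,r(g_m))$ read off factor by factor, and the invariants are identified with $B^S$ via the same constant-function correspondence $b\mapsto [x\mapsto b]$. No substantive difference.
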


\begin{proof}
  Let  $n=[G:S]$ and $\{1_G,g_2,\ldots,g_n\}$ be a set of representatives for the right cosets $G/S$. Considerer the functions $e_{i}\in\Ind_S^G(B)$ defined by
\begin{equation}\label{ecua sistema imprimitivo canonico}
e_{i}(x)=\begin{cases}
1_B, &\text{ si } x\in Sg_i,\\
0, &\text{ si } x\notin Sg_i.
\end{cases}
\end{equation}
The set $\{e_1,\ldots,e_n\}$ is an imprimitive system in $\Ind_S^G(B)$, thus
$$
\Ind_S^G(B)\simeq e_{1}\Ind_S^G(B)\times \cdots \times e_{n}\Ind_S^G(B).
$$
It is clear that $B\simeq e_i\Ind_S^G(B)$, so $\Ind_S^G(B)\simeq \underbrace{B\times\cdots\times B}_{[G:S]}$.

Now, for the last claim, the map
\begin{align*}\label{trans. B encaja Inducida}
    B^S&\to (\Ind_S^G(B))^G\\
    b&\mapsto [x\mapsto b],\notag
\end{align*}
is an algebra isomorphism.
\end{proof}

\begin{nota}\label{nota sistema imprimitivo canonico}
  We will say that the system $\{e_i\}$, defined in \eqref{ecua sistema imprimitivo canonico}, is the \emph{canonical imprimitive system associated to $\Ind_S^G(B)$}.
\end{nota}
\subsection{Galois Objects as Induced Algebras}\label{algebra de Galois asociada a un algebra inducida}

In this section we show that every Galois object of a finite group $G$ is isomorphic to the induced algebra of a simple Galois object of a subgroup of $G$ (see Theorem \ref{teor principal cap 2}).

\begin{lem}\label{lema E_i sistema imprimitivo}
Let $A$ be a semisimple $G$-algebra such that $A^G=k$. Then there is an imprimitive system $\{e_1,\ldots,e_n\}$ in $A$, such that $G$ acts transitively on this system, and $e_iA$ is a simple algebra, for all $i$.
\end{lem}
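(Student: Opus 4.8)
The plan is to use the decomposition of the semisimple algebra $A$ into simple two-sided ideals and to group them into $G$-orbits. Since $A$ is semisimple, we can write $A = A_1 \times \cdots \times A_m$ where the $A_j$ are the minimal two-sided ideals (the "blocks"), and let $f_1, \ldots, f_m$ be the corresponding primitive central idempotents, so that $1_A = f_1 + \cdots + f_m$. The group $G$ acts on $A$ by algebra automorphisms, hence permutes the minimal two-sided ideals, and therefore permutes the set $\{f_1, \ldots, f_m\}$ of primitive central idempotents. This already shows $\{f_1,\ldots,f_m\}$ is an imprimitive system in the sense of the definition; the issue is that $G$ need not act transitively on it.

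Next I would partition $\{f_1, \ldots, f_m\}$ into its $G$-orbits $O_1, \ldots, O_n$, and set $e_i = \sum_{f \in O_i} f$ for $i = 1, \ldots, n$. Each $e_i$ is a central idempotent (sum of orthogonal central idempotents), the $e_i$ are mutually orthogonal, $e_1 + \cdots + e_n = 1_A$, and each $e_i$ is $G$-invariant by construction, so each $g\accion e_i = e_i$. But the key point is that $G$ acts \emph{transitively} on the set $\{e_1, \ldots, e_n\}$: indeed $G$ fixes each $e_i$, so the orbits of this new set are singletons — which is transitivity \emph{only if} $n = 1$. This forces me to invoke the hypothesis $A^G = k$. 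The element $e_1 + \cdots + e_n = 1_A$ together with the fact that each $e_i$ is $G$-invariant means each $e_i \in A^G = k$; since the $e_i$ are nonzero orthogonal idempotents summing to $1$ and $A^G = k$ is a field, there can be only one of them, i.e. $n = 1$. Hence there is a \emph{single} $G$-orbit $O_1 = \{f_1, \ldots, f_m\}$ of primitive central idempotents, and the sought imprimitive system is precisely $\{f_1, \ldots, f_m\}$ itself, with $G$ acting transitively because they form one orbit.

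Finally, $e_i A = f_i A = A_i$ is a minimal two-sided ideal of the semisimple algebra $A$, hence a simple algebra, which gives the last assertion. The main obstacle — really the only subtle point — is recognizing that $A^G = k$ is exactly what collapses the central idempotents into a single orbit; without it one only gets an imprimitive system with possibly non-transitive action. I would also remark that $G$ acting transitively on $\{f_1,\dots,f_m\}$ is consistent with the $f_i$ being non-central-fixed only when $m > 1$, and that the case $m = 1$ (where $A$ is already simple and the system is the trivial one $\{1_A\}$) is included, the transitivity being vacuous. Everything else is the standard block decomposition of a semisimple algebra together with the elementary fact that algebra automorphisms permute minimal two-sided ideals, so no further computation is needed.
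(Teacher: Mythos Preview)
Your proof is correct and follows essentially the same approach as the paper: both take the primitive central idempotents from the Wedderburn decomposition, observe that $G$ permutes them, and use $A^G=k$ to force a single $G$-orbit (the paper argues by contradiction, exhibiting two linearly independent orbit sums in $A^G$, while you argue directly that each orbit sum lies in the field $k$ and hence there is only one). The content is the same; only the packaging of the transitivity step differs.
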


\begin{proof}
Applying Wedderburn's Theorem for semisimple algebras (see \cite[Theorem 4, pag 157]{Lorenz}) we conclude that there is a set $\{e_1,\cdots,e_n\}$ of primitive orthogonal central idempotents in $A$ such that $1_A= e_1+\cdots + e_n$, and  $e_iA$ is a simple algebra. It is immediate that $\{e_i\}$ is an imprimitive system in $A$. Suppose that $G$ does not act transitivity on $\{e_i\}$. Renumbering if is necessary, we can suppose that the orbit of $e_1$ is $\{e_1,\ldots, e_t\}$ and the orbit of $e_{t+1}$ is $\{e_{t+1},\ldots, e_r\}$, where $1\leq t<r\leq n$. Then $x_1=e_1+\cdots+e_t$ and $x_2=e_{t+1}+\cdots+e_{r}$ are linearly independent elements in $A^G$. However, this contradicts the fact that $\dim_kA^G=1$.
\end{proof}

\begin{obs}\label{obs galois complejo}
Following with the same hypothesis of the Lemma \ref{lema E_i sistema imprimitivo}, and supposing  $k=\mathbb C$, we have that $e_1A=\End_k(V)$ for some complex vector space $V$. Using the Skolem-Noether theorem, it is easy to see that every $S$-action over $\End_k(V)$, is defined by a projective representation $\rho:S\to \text{PLG}(V)$, $$(g\cdot f)(v) =\rho(g)( f (\rho(g)^{-1}(v))),$$ for $f\in \End_k(V), g\in G, v\in V$.
Note that  $\End_k(V)^S=k$ if and only if  $V$ is an irreducible projective representation.
\end{obs}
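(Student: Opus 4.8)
The plan is to unpack the claim in two stages: first, to identify the structure of the simple component $e_1A$ over $\mathbb{C}$; second, to pin down exactly what an $S$-action on it looks like. For the first stage, recall from Lemma~\ref{lema E_i sistema imprimitivo} that $e_1A$ is a simple $\mathbb{C}$-algebra, and from the remark following Proposition~\ref{semisimple galois} (together with finite-dimensionality of a $k^G$-Galois object) that $e_1A$ is finite dimensional over $\mathbb{C}$. A finite-dimensional simple algebra over an algebraically closed field is, by the Artin--Wedderburn theorem, a full matrix algebra, so $e_1A \simeq \End_k(V)$ for a finite-dimensional complex vector space $V$; I would just cite Artin--Wedderburn here rather than reprove it.

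For the second stage, I would fix such an isomorphism $e_1A\simeq\End_k(V)$ and consider the $S$-action: each $g\in S$ acts as a $\mathbb{C}$-algebra automorphism of $\End_k(V)$ (the action is by algebra automorphisms because $S$ acts on $A$ by algebra automorphisms and stabilizes $e_1$). By the Skolem--Noether theorem, every $\mathbb{C}$-algebra automorphism of $\End_k(V)$ is inner, i.e.\ conjugation by some invertible operator, which is well defined up to a nonzero scalar; hence for each $g$ we may choose $\rho(g)\in \GL(V)$ with $g\cdot f = \rho(g)\, f\, \rho(g)^{-1}$, and the assignment $g\mapsto \overline{\rho(g)}\in \mathrm{PGL}(V)$ is forced to be a group homomorphism since the composite action is. Writing out $(g\cdot f)(v)=\rho(g)\big(f(\rho(g)^{-1}(v))\big)$ is then just the definition of conjugation unwound. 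Conversely, any such projective representation evidently defines an $S$-action by algebra automorphisms, so this describes all of them.

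Finally, for the statement $\End_k(V)^S=k$ iff $V$ is an irreducible projective representation: an element $f\in\End_k(V)$ is $S$-invariant precisely when $\rho(g)f\rho(g)^{-1}=f$ for all $g$, i.e.\ $f$ commutes with every $\rho(g)$, i.e.\ $f$ is an endomorphism of $V$ as a representation of the group $S$ lifted along a central extension (a projective representation). By Schur's lemma over the algebraically closed field $\mathbb{C}$, this commutant is $k$ exactly when the projective representation is irreducible. The only genuinely delicate point is making the scalar ambiguity in the Skolem--Noether lifts harmless: one must check that the resulting cocycle $\rho(g)\rho(h)=c(g,h)\rho(gh)$, $c(g,h)\in k^*$, does not obstruct anything here, which it does not, since the action itself — being conjugation — only sees $\rho$ modulo scalars. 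I expect that bookkeeping to be the main (and only real) obstacle; the rest is a direct application of standard structure theorems.
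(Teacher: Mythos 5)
Your argument is correct and follows exactly the route the paper intends for this remark (which it leaves as ``easy to see''): Artin--Wedderburn to get $e_1A\simeq\End_{\mathbb C}(V)$, Skolem--Noether to realize each automorphism as conjugation and hence obtain the projective representation $\rho:S\to\mathrm{PGL}(V)$, and Schur's lemma to identify $\End_{\mathbb C}(V)^S=\mathbb C$ with irreducibility of $V$. Your attention to the scalar ambiguity in the Skolem--Noether lifts (the resulting $2$-cocycle) is exactly the right bookkeeping and causes no problem, as you note.
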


The following proposition is the Proposition 3.2 in \cite{Davy}, and we give an alternative proof.

\begin{prop}\label{prop inducida objeto de g}
Let $S$ be a subgroup of $G$, and $(B,\cdot)$ be an $S$-algebra. Then $B$ is a $k^S$-Galois object if and only if the induced algebra $\Ind_S^G(B)$ is a $k^G$-Galois object.
\end{prop}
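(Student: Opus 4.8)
The plan is to use the characterization of $k^G$-Galois objects from Proposition \ref{caracterizacion A.Ga=ideales G-inv}, checking its three conditions ($\dim = |G|$, no nontrivial $G$-invariant left ideals, and invariants equal $k$) for $A = \Ind_S^G(B)$ and relating each to the corresponding condition for $B$ as a $k^S$-comodule algebra. The dictionary between $S$-algebra structure and $k^S$-comodule algebra structure (and likewise for $G$) is exactly the one recalled at the start of Section \ref{groups}, so throughout I work with the $G$-action picture.

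First I would dispose of the dimension: by Lemma \ref{lema inducida=B y algebra invariantes}, $\Ind_S^G(B) \cong B^{\times [G:S]}$ as an algebra, so $\dim_k \Ind_S^G(B) = [G:S]\dim_k B$; hence $\dim_k \Ind_S^G(B) = |G|$ iff $\dim_k B = |S|$. Next, the invariants: the same lemma gives $(\Ind_S^G(B))^G \cong B^S$, so $(\Ind_S^G(B))^G = k$ iff $B^S = k$. The substantive point is the middle condition. I would show that $\Ind_S^G(B)$ has a nontrivial $G$-invariant left ideal if and only if $B$ has a nontrivial $S$-invariant left ideal. Using the function-algebra model $A_S(G,B)$, a $G$-invariant left ideal $J$ is determined by its "value at $1_G$" (or more precisely by evaluation/restriction along the decomposition indexed by cosets): concretely, given an $S$-invariant left ideal $J' \sset B$, the set of $r \in A_S(G,B)$ with $r(1_G) \in J'$ is a $G$-invariant left ideal, and conversely, restricting a $G$-invariant left ideal of $\Ind_S^G(B)$ to the factor $e_1\Ind_S^G(B) \cong B$ yields an $S$-invariant left ideal of $B$; one checks these constructions are mutually inverse on nontrivial ideals and that triviality is preserved in both directions. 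Assembling these three equivalences and invoking Proposition \ref{caracterizacion A.Ga=ideales G-inv} (and its $k^S$-analogue, which is the same statement with $G$ replaced by $S$) finishes both implications at once.

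The main obstacle I expect is the careful bookkeeping for the ideal correspondence: one must verify that the $e_i\Ind_S^G(B)$ are permuted by $G$ with $e_1$ stabilized exactly by $S$ (this is built into the canonical imprimitive system of Notation \ref{nota sistema imprimitivo canonico}), that a $G$-invariant ideal need not respect the idempotent decomposition a priori but that its "first component" $e_1 J$ is automatically $S$-invariant because $S = \mathrm{Stab}_G(e_1)$, and — crucially for the converse direction — that a nontrivial $S$-invariant ideal of $B$ really produces a $G$-invariant ideal of $\Ind_S^G(B)$ that is neither $0$ nor everything. Since $B$ is semisimple in the relevant case (Corollary \ref{algebra de galois semisimple}), but one should not assume that in the "if" direction before knowing $B$ is a Galois object, I would phrase the ideal argument purely in terms of the module structure over $A_S(G,B) \sedi G$ versus $B \sedi S$, which avoids any semisimplicity input. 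Everything else is routine once the coset/idempotent indexing is fixed.
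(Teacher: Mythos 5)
Your proposal follows essentially the same route as the paper: reduce to the three conditions of Proposition \ref{caracterizacion A.Ga=ideales G-inv}, dispose of the dimension and invariants via Lemma \ref{lema inducida=B y algebra invariantes}, and set up a correspondence between $G$-invariant left ideals of $\Ind_S^G(B)$ and $S$-invariant left ideals of $B$ through the canonical imprimitive system ($e_1J$ in one direction, an induced ideal in the other). One concrete slip: the set $\{r\in A_S(G,B)\mid r(1_G)\in J'\}$ is \emph{not} $G$-invariant, since $(g\accion r)(1_G)=r(g)$ and $r(g)$ is unconstrained for $g\notin S$; the correct induced ideal is $\{r\mid r(g)\in J'\ \text{for all}\ g\in G\}$, i.e.\ $\bigoplus_i g_i\otimes J'$, which is the paper's $\tilde{J}=g_1J\oplus\cdots\oplus g_nJ$, and with it nontriviality in both directions is immediate because a $G$-invariant $J$ automatically decomposes as $\bigoplus_i e_iJ$ with $e_iJ=g_i\accion(e_1J)$. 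With that correction the argument is complete and matches the paper's proof; your observation that no semisimplicity of $B$ is needed is consistent with what the paper actually does.
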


\begin{proof}
We will use the Proposition \ref{caracterizacion A.Ga=ideales G-inv} for the proof.

For the Lemma \ref{lema inducida=B y algebra invariantes} and Proposition \ref{caracterizacion A.Ga=ideales G-inv}, it is clear that $B^S=k$ if and only if $\Ind_S^G(B)^G=k$. Moreover,  $\dim(B)=|S|$ if and only if $\dim(\Ind_S^G(B))=|G|$.

Let $\{e_1,\ldots,e_n\}$ be the canonical imprimitive system in $\Ind_S^G(B)$, see Remark \ref{nota sistema imprimitivo canonico}.

Suppose that $B$ is a $k^S$-Galois object. Let $J$ be a $G$-invariant left ideal in $\Ind_S^G(B)$. Then $e_1J$ is an $S$-invariant left ideal of $e_1\Ind_S^G(B)\simeq B$. Since,  $g_i\accion (e_1J)=e_iJ$ then $J=\bigoplus_{i=1}^ne_iJ$ is a trivial ideal.

Conversely, suppose that $\Ind_S^G(B)$ is a $k^G$-Galois object. Let $J$ be an $S$-invariant left ideal in $B\simeq e_1\Ind_S^G(B)$, since $\tilde{J}=g_1J\oplus\cdots \oplus g_nJ$ is a $G$-invariant left ideal in $\Ind_S^G(B)$. Then $J$ is a trivial ideal.
\end{proof}

\begin{teor}\label{teor principal cap 2}
Every $k^G$-Galois object is isomorphic as a $G$-algebra to the induced algebra $\Ind_S^G(B)$, where $B$ is a simple $k^S$-Galois object and $S$ is a subgroup of $G$.
\end{teor}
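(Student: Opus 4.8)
The plan is to assemble the result from the tools developed in Section~3. Let $A$ be a $k^G$-Galois object. By Corollary~\ref{algebra de galois semisimple}, $A$ is a semisimple $G$-algebra, and by Proposition~\ref{caracterizacion A.Ga=ideales G-inv}(iii) we have $A^G = k$. Hence Lemma~\ref{lema E_i sistema imprimitivo} applies: there is an imprimitive system $\{e_1,\dots,e_n\}$ of primitive orthogonal central idempotents in $A$ on which $G$ acts transitively, with each $e_iA$ a simple algebra. Let $S$ be the stabilizer of $e_1$ in $G$, and set $B := e_1A$, which is a simple algebra carrying the natural $S$-action (the $S$-action is well-defined on $e_1A$ precisely because $S$ fixes $e_1$).

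Next I would invoke Proposition~\ref{caracterizacion algebras imprimitivas}, which gives an isomorphism of $G$-algebras $A \simeq \Ind_S^G(e_1A) = \Ind_S^G(B)$. It remains to verify that $B$ is a $k^S$-Galois object. For this, apply Proposition~\ref{prop inducida objeto de g} in the reverse direction: since $\Ind_S^G(B) \simeq A$ is a $k^G$-Galois object, the proposition yields that $B$ is a $k^S$-Galois object. Finally, $B = e_1A$ is simple by construction in Lemma~\ref{lema E_i sistema imprimitivo}. This completes the proof.

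The only point requiring a little care is confirming that the $S$-algebra structure on $e_1A$ used when applying Proposition~\ref{caracterizacion algebras imprimitivas} coincides with the one used in Proposition~\ref{prop inducida objeto de g}; both are simply the restriction of the $G$-action, which preserves $e_1A$ since $S = \operatorname{Stab}_G(e_1)$, so there is no real obstacle here. Strictly speaking one should also note $n = [G:S]$ is finite because $G$ is finite. I do not expect any genuine difficulty: the theorem is essentially a repackaging of Lemma~\ref{lema E_i sistema imprimitivo}, Proposition~\ref{caracterizacion algebras imprimitivas}, and Proposition~\ref{prop inducida objeto de g}.
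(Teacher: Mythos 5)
Your proposal is correct and follows essentially the same route as the paper's own proof: semisimplicity (Corollary \ref{algebra de galois semisimple}), the transitive imprimitive system of Lemma \ref{lema E_i sistema imprimitivo}, the identification $A\simeq \Ind_S^G(e_1A)$ from Proposition \ref{caracterizacion algebras imprimitivas}, and then Proposition \ref{prop inducida objeto de g} to conclude that $e_1A$ is a $k^S$-Galois object. You merely spell out the intermediate checks (such as $A^G=k$) that the paper leaves implicit.
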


\begin{proof}
Suppose that $A$ is a $k^G$-Galois object. By Corollary  \ref{algebra de galois semisimple} the algebra $A$ is semisimple, so  by Lemma \ref{lema E_i sistema imprimitivo} and Proposition \ref{caracterizacion algebras imprimitivas} the $k^G$-Galois object $A$ is the induced of a simple Galois object. Thus the theorem follows by Proposition \ref{prop inducida objeto de g}.
\end{proof}

\begin{obs}
Following the Remark \ref{obs galois complejo}, by Proposition \ref{caracterizacion A.Ga=ideales G-inv} if $V$ is an irreducible  projective representation of the group $S$, the $S$-algebra $\End_{\mathbb C}(V)$, is a $\mathbb C^S$-Galois object if and only $\dim_{\mathbb C}(V)=|S|^{1/2}$. The condition $\dim_{\mathbb C}(V)=|S|^{1/2}$ implies that the 2-cocycle of $S$, associated to the projective representation is non-degenerated, see Definition \ref{defin cociclo non-dege}. Hence, in the complex case, the Galois objects of a group $G$ are classified by pairs $(S,\alpha)$, where $S\subseteq G$ is a subgroup, and $\alpha\in Z^2(S,\mathbb C^*)$ is a non-degenerated 2-cocycle, it is the main result of \cite{Mov}.
\end{obs}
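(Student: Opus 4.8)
The plan is to apply Proposition~\ref{caracterizacion A.Ga=ideales G-inv} to the simple $\mathbb{C}$-algebra $A=\End_{\mathbb C}(V)$, whose $S$-action, by Remark~\ref{obs galois complejo}, is given by a projective representation $\rho\colon S\to \mathrm{PGL}(V)$ through $(g\accion f)=\rho(g)f\rho(g)^{-1}$. That proposition demands three conditions: (i) $\dim_{\mathbb C}A=|S|$, (ii) no non-trivial $S$-invariant left ideals, and (iii) $A^S=\mathbb C$. I would argue that irreducibility of $V$ forces (ii) and (iii) to hold automatically, so that the sole remaining constraint is (i), which reads $(\dim_{\mathbb C}V)^2=|S|$, i.e. $\dim_{\mathbb C}V=|S|^{1/2}$.

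Condition (iii) is Schur's lemma, since $A^S$ is exactly the commutant of $\rho$ and hence equals $\mathbb C$ precisely when $V$ is irreducible (this is already recorded in Remark~\ref{obs galois complejo}). For condition (ii) I would use the identification $\End_{\mathbb C}(V)\cong V\otimes V^*$, under which the left ideals of $\End_{\mathbb C}(V)$ correspond bijectively and order-preservingly to the subspaces $U\subseteq V^*$, the ideal attached to $U$ being $V\otimes U$; indeed left multiplication acts only on the $V$ factor, so the submodules of $V\otimes V^*$ are parametrized by $\Hom_{\End(V)}(V,V\otimes V^*)=V^*$. Since conjugation by $\rho(g)$ sends $v\otimes\phi$ to $\rho(g)(v)\otimes(\phi\circ\rho(g)^{-1})$, it acts on the $V^*$ factor by the contragredient projective action, so an $S$-invariant left ideal corresponds to a projectively invariant subspace of $V^*$. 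As $V^*$ is irreducible whenever $V$ is, the only such subspaces are $0$ and $V^*$, giving (ii). Feeding (ii) and (iii) into Proposition~\ref{caracterizacion A.Ga=ideales G-inv} reduces the Galois condition to (i) and proves the first assertion.

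For the second assertion I would fix a set-theoretic lift $\tilde\rho\colon S\to\mathrm{GL}(V)$ of $\rho$, producing a $2$-cocycle $\alpha\in Z^2(S,\mathbb C^*)$ with $\tilde\rho(g)\tilde\rho(h)=\alpha(g,h)\tilde\rho(gh)$, so that $V$ becomes a simple module over the twisted group algebra $\mathbb C_\alpha S$; recall that $\alpha$ is non-degenerate exactly when the center of $\mathbb C_\alpha S$ is $\mathbb C$, equivalently when $\mathbb C_\alpha S$ is simple. Since $\car\mathbb C=0\nmid|S|$, the algebra $\mathbb C_\alpha S$ is semisimple, so Wedderburn gives $\mathbb C_\alpha S\cong\prod_i M_{d_i}(\mathbb C)$ with $\sum_i d_i^2=|S|$ and simple modules of dimensions $d_i$. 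If $V$ is the simple module with $\dim_{\mathbb C}V=d_j=|S|^{1/2}$, then $d_j^2=\sum_i d_i^2$ forces $\sum_{i\neq j}d_i^2=0$, i.e. a single Wedderburn block; thus $\mathbb C_\alpha S$ is simple and $\alpha$ is non-degenerate.

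Finally, for the classification I would combine the above with Theorem~\ref{teor principal cap 2}: over $\mathbb C$ every $\mathbb C^G$-Galois object is induced from a simple $\mathbb C^S$-Galois object $B$ for some subgroup $S\le G$, and since $\mathbb C$ is algebraically closed such a simple algebra is $\End_{\mathbb C}(V)$ with $S$ acting through a projective representation (Remark~\ref{obs galois complejo}). By the first two parts $B$ is a Galois object precisely when $V$ is irreducible with $\dim_{\mathbb C}V=|S|^{1/2}$, equivalently when the associated class $\alpha\in Z^2(S,\mathbb C^*)$ is non-degenerate; conversely a non-degenerate $\alpha$ yields the simple algebra $\mathbb C_\alpha S\cong\End_{\mathbb C}(V)$. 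Hence complex Galois objects are parametrized by pairs $(S,\alpha)$, recovering the main result of \cite{Mov}. I expect the main obstacle to be the careful translation of the ring-theoretic conditions (ii) and (iii) into representation-theoretic statements about $V$, in particular establishing the left-ideal/invariant-subspace dictionary and verifying that conjugation induces the contragredient action; the Wedderburn dimension count and the reduction to Movshev's pairs are then comparatively routine.
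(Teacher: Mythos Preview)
Your proposal is correct and follows exactly the route the paper indicates: the remark in the paper is not accompanied by a detailed proof but simply invokes Remark~\ref{obs galois complejo} (giving condition (iii) via Schur), Proposition~\ref{caracterizacion A.Ga=ideales G-inv} (reducing to the dimension count), Definition~\ref{defin cociclo non-dege}, and implicitly Theorem~\ref{teor principal cap 2} for the passage to Movshev's pairs. You have fleshed out precisely these steps; in particular your verification of condition~(ii) via the identification $\End_{\mathbb C}(V)\cong V\otimes V^*$ and the contragredient action, and your Wedderburn dimension argument $d_j^2=\sum_i d_i^2\Rightarrow$ single block, are the natural details the paper leaves implicit.
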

\section{Simple Galois objects}\label{seccion 4}

In this section we show a structure theorem for simple $k^G$-Galois objects. We use the Miyashita-Ulbrich action in order to prove that every simple Galois object is isomorphic as a graded algebra to a twisted group algebra of a normal subgroup of $G$.

\subsection{Simple Galois objects as twisted group algebras}
Recall that a $k^G$-module algebra $B$ is the same as a $G$-graded algebra $B=\bigoplus_{g\in G}B_g$, where $B_g= B\triangleleft e_g$. Thus, if $A$ is a $k^G$-Galois object, the Miyashita-Ulbrich action of $k^G$ over $A$ defines a structure of $G$-graded algebra $A=\bigoplus_{g\in G}A_g$, where \[
A_{g}= \{a\in A| ab= (g\accion b)a \ \ \forall b\in A\}.
\]

\begin{prop}\label{proposicion propiedades graduacion}
Let $A$ be a $k^G$-Galois object, then

\begin{enumerate}[i)]
  \item $A_gA=AA_g$ is bilateral ideal of $A$,
  \item $A_g$ is a $\mathcal{Z}(A)$-submodule of $A$,
  \item $A_e=\mathcal{Z}(A)$,
  \item $g\accion A_x= A_{gxg^{-1}}$.
\end{enumerate}
\end{prop}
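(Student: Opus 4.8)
The plan is to read everything off the description $A_g=\{a\in A\mid ab=(g\accion b)a\ \text{ for all }b\in A\}$ recalled just above the statement, together with the fact, also recalled there, that $A=\bigoplus_{g\in G}A_g$ is a $G$-graded algebra, i.e. $A_gA_h\subseteq A_{gh}$. No deep input is needed beyond this; the argument is essentially formal.

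First I would dispose of $(iii)$ and $(ii)$. Since $e\accion b=b$ for every $b$, the condition defining $A_e$ reads $ab=ba$ for all $b\in A$, so $A_e=\mathcal{Z}(A)$, which is $(iii)$. Then $(ii)$ is immediate from the grading together with $(iii)$: $\mathcal{Z}(A)A_g=A_eA_g\subseteq A_{eg}=A_g$ and likewise $A_gA_e\subseteq A_{ge}=A_g$, and since $\mathcal{Z}(A)$ is commutative this says exactly that $A_g$ is a $\mathcal{Z}(A)$-submodule of $A$.

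For $(i)$, let $a\in A_g$ and $b\in A$. By definition $ab=(g\accion b)a\in AA_g$, so $A_gA\subseteq AA_g$; replacing $b$ by $g^{-1}\accion b$ and using that $\accion$ is a group action gives $ba=a(g^{-1}\accion b)\in A_gA$, so $AA_g\subseteq A_gA$. Hence $A_gA=AA_g$. This common subset is a right ideal since $(A_gA)A=A_gA$, and it is a left ideal because, using the identity just proved, $A(A_gA)=(AA_g)A=(A_gA)A=A_gA$; therefore it is a bilateral ideal.

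Finally, for $(iv)$ I would use that $G$ acts on $A$ by algebra automorphisms, so $(g\accion a)(g\accion c)=g\accion(ac)$ for all $a,c$. Given $a\in A_x$ and $b\in A$, set $c=g^{-1}\accion b$, so $b=g\accion c$; then $(g\accion a)\,b=(g\accion a)(g\accion c)=g\accion(ac)=g\accion\bigl((x\accion c)a\bigr)=\bigl((gx)\accion c\bigr)(g\accion a)=\bigl((gxg^{-1})\accion b\bigr)(g\accion a)$, which says $g\accion a\in A_{gxg^{-1}}$. Thus $g\accion A_x\subseteq A_{gxg^{-1}}$, and applying this inclusion with $g$ replaced by $g^{-1}$ and $x$ by $gxg^{-1}$ yields the reverse inclusion, hence $g\accion A_x=A_{gxg^{-1}}$. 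I do not anticipate any real obstacle: the only points requiring a little care are the group-action identities $g\accion(g^{-1}\accion b)=b$ and $(gx)\accion(g^{-1}\accion b)=(gxg^{-1})\accion b$ used in $(i)$ and $(iv)$, and keeping straight which of the two commuting actions — the $G$-action $\accion$ versus the Miyashita--Ulbrich grading $\triangleleft$ — is in play at each step.
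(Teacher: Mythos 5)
Your proof is correct, and since the paper dismisses this proposition with ``Straightforward,'' your routine verification directly from the description $A_g=\{a\in A\mid ab=(g\accion b)a\ \forall b\in A\}$, the grading property $A_gA_h\subseteq A_{gh}$, and the fact that $G$ acts by algebra automorphisms is exactly the intended argument. All four items check out, including the two substitution tricks ($b\mapsto g^{-1}\accion b$ in $(i)$ and $(iv)$) that give the reverse inclusions.
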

\begin{proof}
Straightforward.
\end{proof}

\emph{From now on we will denote by $A$ a $k^G$-Galois object, where $A$ is a central simple algebra over $K$.}

Let considerer the normal subgroup of $G$, given by \begin{equation}\label{ecuacion definicion subgrupo N}
N=\{x\in G|x\accion\alpha=\alpha,\ \ \forall\alpha\in K\}.
\end{equation}

\begin{lem}\label{lema equivalencias h en N}
The following affirmations are equivalent:
\begin{enumerate}[$i)$]
  \item  $x\in N$.
  \item $A_x\neq 0$.
  \item $A_gA_x=A_{gx}$ for all $g\in G$.
  \item $A_xA_{x^{-1}}=A_{x^{-1}}A_x=K$.
  \item There is an invertible element in $A_x.$
\end{enumerate}
\end{lem}

\begin{proof}
$i)\Rightarrow ii)$ Since $A$ is simple, by \cite[Example 2.4]{Takeuchi87} we have a category equivalence $$A\otimes_K(-):\ _K\mathcal{M}\to\ _A\mathcal{M}_A,\ \ V\mapsto A\otimes V,$$with quasi-inverse functor given by $$(-)^A:\ _A\mathcal{M}_A\to\ _K\mathcal{M}, \  \  M\mapsto M^A=\{m\in M| am=ma, \forall a\in A\}.$$   For every $x\in N$, we define an $A$-bimodule $A^x$, where the right action is the multiplication and left action is given by $b\cdot a= (x\rightharpoonup b) a$ for all $b\in A, a\in A^x$. Thus, $(A^x)^A= A_x= \{a\in A| ab= (x\accion b)a, \ \ \forall b\in A\}$, and since $A^x\neq 0$, then $A_x\neq0$.\\

$ii)\Rightarrow iii)$  Recall that $A_xA=AA_x$ is a bilateral ideal. Since $A$ is simple, $0\varsubsetneq A_x\sset A_xA=A$. The equation
  $$\bigoplus_{g\in G}A_g=A=A_xA=\bigoplus_{g\in G}A_xA_g,$$ implies $A_xA_{g}=A_{xg}$.\\

$iii) \Rightarrow iv) \Rightarrow v)$ Straightforward.\\

$v)\Rightarrow i)$ Let $u_x\in A_x$ be a unit, and let $\alpha\in K$. By the characterization of Miyashita-Ulbrich, we have
$$[(x\accion \alpha)-\alpha]u_x=0,$$
hence,  $(x\accion \alpha)-\alpha=0$.
\end{proof}

\begin{obs}\label{obs base twisted}
Let $S$ be a finite group,  $K$ a field, and $\s\in Z^2(S, K^*)$ a 2-cocycle. For each $s\in S$, we will use the notation $u_s \in K_{\s}S$ to indicate the corresponding element in the \emph{twisted group algebra} $K_{\s}S$. Thus $(u_s)_{s\in S}$ is a $K$-basis of $K_{\s}S$, and in this basis $u_su_t = \s(s, t)u_{st}$.
\end{obs}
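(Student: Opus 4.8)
This final statement is a notational remark: it recalls the twisted group algebra $K_{\s}S$ and fixes the names $u_s$ for its structure basis, so the only mathematical content to verify is that the prescribed data really do assemble into an associative unital $K$-algebra with $(u_s)_{s\in S}$ a $K$-basis. The plan is therefore to \emph{define} $K_{\s}S$ to be the free $K$-vector space on symbols $u_s$, $s\in S$, which makes the basis assertion (and hence $\dim_K K_{\s}S=|S|$) true by construction, and then to check that the $K$-bilinear extension of $u_su_t=\s(s,t)u_{st}$ is associative and admits an identity element.

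The heart of the matter is associativity, and I would reduce it to the 2-cocycle condition by comparing the two bracketings of a triple product on basis elements:
\begin{align*}
(u_su_t)u_r &= \s(s,t)\,u_{st}u_r = \s(s,t)\s(st,r)\,u_{str},\\
u_s(u_tu_r) &= \s(t,r)\,u_su_{tr} = \s(t,r)\s(s,tr)\,u_{str}.
\end{align*}
Hence associativity on the basis is exactly the equality $\s(s,t)\s(st,r)=\s(t,r)\s(s,tr)$, which is the defining relation for $\s\in Z^2(S,K^*)$; since the two sides agree on all basis triples and the product is $K$-bilinear, associativity propagates to arbitrary elements. (If one later needs the semilinear version, one keeps a twist $s\cdot\s(t,r)$ on the right-hand side and the same computation returns the twisted cocycle identity; here $K$ is central, so no twist appears.)

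For the identity element I would evaluate the cocycle relation at the degenerate triples $(1,1,r)$ and $(s,1,1)$, which force $\s(1,s)=\s(1,1)=\s(s,1)$ for every $s$; a direct check then gives that $e:=\s(1,1)^{-1}u_1$ satisfies $e\,u_s=u_s=u_s\,e$, so $e$ is the two-sided unit (simply $u_1$ when $\s$ is normalized). I do not expect any genuine obstacle: the single line identifying associativity with the cocycle equation is the whole content, while the basis property and the unit are immediate from the construction. The role of the remark is merely to install, once and for all, the notation $u_s$ and the structure constants $\s(s,t)$ that will govern every twisted group algebra appearing in the rest of the paper.
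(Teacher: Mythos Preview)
Your proposal is correct, but it does considerably more than the paper does. In the paper this is literally a \emph{remark} introducing notation: it recalls the twisted group algebra $K_{\s}S$, fixes the symbols $u_s$ for the canonical basis, and records the multiplication rule $u_su_t=\s(s,t)u_{st}$. No proof is given or intended; the existence and associativity of $K_{\s}S$ are treated as standard background (the paper cites Karpilovsky's monograph for facts about twisted group algebras). Your verification that associativity is equivalent to the cocycle identity and your construction of the unit $\s(1,1)^{-1}u_1$ are entirely correct and constitute a self-contained justification that the paper simply assumes. So there is no discrepancy in content, only in level of detail: you supply a proof where the paper merely fixes notation.
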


Recall that an element $s\in S$ is called $\s$-\emph{regular} if $\s(s, t) = \s(t, s)$ for all $t \in C_S(s)$. This definition depends only on the class of $s$ under conjugation.
\begin{defin}\label{defin cociclo non-dege}
Let $\sigma\in Z^2(S,K^*)$ be a 2-cocycle. The 2-cocycle $\s$ is called \emph{non-degenerate} if and only if $\{1\}$ is the only $\s$-regular class in $S$.

\end{defin}

\begin{obs}\label{remark non-degenerate}
The 2-cocycle $\s$ is non-degenerate if and only if $\dim_K \mathcal{Z}(K_\s S)=1$  (see \cite[Theorem 9.3, pag 410]{karpi}).
\end{obs}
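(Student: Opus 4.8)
The plan is to connect the combinatorial non-degeneracy condition (that the trivial class is the only $\sigma$-regular class) with the dimension of the center of the twisted group algebra $K_\sigma S$, via an explicit description of a $K$-basis of $\mathcal{Z}(K_\sigma S)$. First I would recall the standard computation: for $\beta = \sum_{s\in S}\beta_s u_s$ with $\beta_s\in K$, the condition $\beta\in\mathcal{Z}(K_\sigma S)$ is equivalent to $u_t\beta = \beta u_t$ for all $t\in S$; expanding both sides using $u_t u_s = \sigma(t,s)u_{ts}$ and $u_s u_t = \sigma(s,t)u_{st}$, and comparing coefficients, one obtains that $\beta$ is central if and only if for every $t$ and every $s$ one has $\beta_{t s t^{-1}}\,\sigma(t,s) = \beta_s\,\sigma(tst^{-1},t)$ (after reindexing). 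The key consequences to extract from this are: (a) the support of any central element is a union of conjugacy classes; and (b) on a conjugacy class, once $\beta_s$ is chosen for one representative $s$, all other coefficients $\beta_{tst^{-1}}$ are forced by the formula above — provided the forcing is consistent, which requires exactly that the relations coming from $t\in C_S(s)$ be satisfiable, i.e. $\sigma(t,s)=\sigma(s,t)$ for all $t\in C_S(s)$, which is the statement that $s$ is $\sigma$-regular.

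The main technical point, and the step I expect to be the main obstacle, is verifying that when $s$ \emph{is} $\sigma$-regular the coefficients $\beta_{tst^{-1}}$ are in fact well-defined by the recursion — i.e. that if $t_1 s t_1^{-1} = t_2 s t_2^{-1}$ then the two prescriptions for $\beta_{t_1 s t_1^{-1}}$ agree. This amounts to a cocycle bookkeeping argument: $t_2^{-1}t_1 \in C_S(s)$, and one must chase the 2-cocycle identity for $\sigma$ together with the $\sigma$-regularity hypothesis $\sigma(c,s)=\sigma(s,c)$ for $c\in C_S(s)$ to see that the ratio of the two coefficient prescriptions is $1$. I would set this up by normalizing $\sigma$ (replace it by a cohomologous normalized cocycle, which changes neither the center nor $\sigma$-regularity of classes) so that $u_1 = 1$ and the formulas simplify. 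Granting well-definedness, each $\sigma$-regular class $C$ contributes exactly a one-dimensional space of central elements supported on $C$ (spanned by the element $z_C = \sum_{s\in C}\beta_s u_s$ with $\beta_s$ determined up to the single scalar $\beta_{s_0}$), and classes that are not $\sigma$-regular contribute nothing, since the constraint from some $t\in C_S(s)$ forces $\beta_s = 0$ and hence the whole class vanishes.

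Putting this together: $\dim_K \mathcal{Z}(K_\sigma S)$ equals the number of $\sigma$-regular conjugacy classes of $S$. Since $\{1\}$ is always a $\sigma$-regular class (for a normalized cocycle $\sigma(1,s)=\sigma(s,1)=1$), we get $\dim_K\mathcal{Z}(K_\sigma S)\ge 1$ always, with equality precisely when there are no other $\sigma$-regular classes — which is Definition \ref{defin cociclo non-dege}. This gives the claimed equivalence. As the excerpt suggests, one may alternatively just cite \cite[Theorem 9.3, pag 410]{karpi}, where this dimension-count of the center of a twisted group algebra in terms of $\sigma$-regular classes is carried out in detail; the argument above is the outline of that theorem specialized to what we need.
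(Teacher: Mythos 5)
Your proposal is correct and takes essentially the same route as the paper, which simply delegates this fact to \cite[Theorem 9.3, pag 410]{karpi}: the center of $K_\sigma S$ has a $K$-basis indexed by the $\sigma$-regular conjugacy classes, so its dimension is $1$ exactly when $\{1\}$ is the only such class. Your sketch of that basis count (support a union of classes, coefficients forced along a class, non-regular classes killed by the relation for $t\in C_S(s)$) is the standard argument behind the cited theorem and is sound, modulo the harmless convention ambiguity in which side of the relation each $\sigma$-value sits.
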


\begin{prop}\label{prop A simeq K_sigma N como algebras}
Let $A$ be a $k^G$-Galois object, where $A$ is a central simple algebra over $K$. Then $A$ is isomorphic to a twisted group algebra $K_\sigma N,$ where $N$ is the normal subgroup of  $G$ that stabilizes $K$, and $\sigma:N\times N\to K^*$ is a non-degenerated 2-cocycle. Moreover, if we suppose $A=K_\sigma N$, then the action of $x\in N$ over $a\in A$ is given by $$x\accion a= u_xau_x^{-1}.$$
\end{prop}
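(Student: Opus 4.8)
The plan is to use the $G$-grading $A = \bigoplus_{g \in G} A_g$ coming from the Miyashita-Ulbrich action, together with Lemma \ref{lema equivalencias h en N}, which tells us that $A_g \ne 0$ precisely when $g \in N$, and that in that case $A_g$ contains an invertible element. First I would fix, for each $x \in N$, an invertible element $u_x \in A_x$ (this is possible by $i) \Rightarrow v)$ of Lemma \ref{lema equivalencias h en N}, after first choosing $u_1 = 1_A$). The claim is that $\{u_x\}_{x \in N}$ is a $K$-basis of $A$. Indeed, by Proposition \ref{proposicion propiedades graduacion}, $A_e = \mathcal{Z}(A) = K$ and each $A_x$ is a $K$-submodule; since $A_xA_{x^{-1}} = K$ and $u_x$ is invertible, left multiplication by $u_x$ gives a $K$-linear isomorphism $K = A_e \to A_x$, so $\dim_K A_x = 1$ and $A_x = K u_x$. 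Summing over $x \in N$ (the only indices with $A_x \ne 0$) gives $A = \bigoplus_{x \in N} K u_x$, so $\{u_x\}$ is a $K$-basis.

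Next I would extract the cocycle. For $x, y \in N$ we have $u_x u_y \in A_x A_y = A_{xy} = K u_{xy}$ by Lemma \ref{lema equivalencias h en N} $iii)$, so there is a unique $\sigma(x,y) \in K$ with $u_x u_y = \sigma(x,y) u_{xy}$; since $u_x, u_y$ are invertible, $\sigma(x,y) \in K^*$. Associativity of $A$ forces the 2-cocycle identity on $\sigma$, so $\sigma \in Z^2(N, K^*)$, and the $K$-linear map $K_\sigma N \to A$ sending the basis element $u_x$ to $u_x$ is then an algebra isomorphism. One subtlety: to speak of $\sigma$ as a cocycle valued in $K^*$ rather than just some commutative ring, and to have the twisted group algebra be over $K$, one should check that the $u_x$ commute with $K$ in the right way — but that is handled in the next step and is compatible because $N$ acts trivially on $K$.

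For the action formula, take $x \in N$ and $\alpha \in K = A_e$. By the characterizing property of the Miyashita-Ulbrich action, $b a = (x \accion b) a$ for all $b \in A$ whenever $a \in A_x$; applying this with $a = u_x$ and $b = \alpha$ gives $\alpha u_x = (x \accion \alpha) u_x = \alpha u_x$, consistent with $x$ acting trivially on $K$ (which is how $N$ was defined in \eqref{ecuacion definicion subgrupo N}), so in particular $K$ lies in the degree-$e$ part and the twisted group algebra structure is genuinely $K$-linear. Now for general $a \in A$ and $x \in N$: since $u_x \in A_x$, the defining relation of $A_x$ reads $u_x b = (x \accion b) u_x$ for all $b \in A$, and rearranging (using that $u_x$ is invertible) yields $x \accion b = u_x b u_x^{-1}$. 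This is exactly the asserted formula $x \accion a = u_x a u_x^{-1}$.

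Finally, non-degeneracy of $\sigma$: by Remark \ref{remark non-degenerate}, $\sigma$ is non-degenerate iff $\dim_K \mathcal{Z}(K_\sigma N) = 1$, i.e. iff $\mathcal{Z}(K_\sigma N) = K$. Under the isomorphism $K_\sigma N \simeq A$ this says $\mathcal{Z}(A) = K$, which is precisely our standing hypothesis that $A$ is a central simple algebra over $K$. The main obstacle I anticipate is not any single hard step but rather being careful that the three pieces of structure fit together coherently: that $K$ really is the coefficient field sitting inside $A$ as $A_e$, that $N$ acting trivially on $K$ makes the conjugation action $u_x(-)u_x^{-1}$ restrict to the identity on $K$ (so $\sigma$ is honestly a $K^*$-valued cocycle for the trivial action, matching the definition of $K_\sigma N$ in Remark \ref{obs base twisted}), and that the resulting isomorphism is simultaneously an algebra isomorphism and compatible with the gradings. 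Everything else is the routine bookkeeping of choosing the $u_x$ and reading off $\sigma$.
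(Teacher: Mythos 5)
Your proposal is correct and follows essentially the same route as the paper: use Lemma \ref{lema equivalencias h en N} to see that $A$ is a crossed product of $K=A_e=\mathcal{Z}(A)$ by $N$ with invertible homogeneous elements $u_x$, identify it with $K_\sigma N$, deduce non-degeneracy of $\sigma$ from $\dim_K\mathcal{Z}(A)=1$ via Remark \ref{remark non-degenerate}, and read off $x\accion a=u_xau_x^{-1}$ from the defining relation of $A_x$. You merely fill in details the paper leaves implicit (one-dimensionality of each $A_x$ over $K$ and the extraction of the cocycle), and the small transcription slip $ba=(x\accion b)a$ versus $ab=(x\accion b)a$ is harmless since you only apply it with $b\in\mathcal{Z}(A)$.
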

\begin{proof}
By the Lemma \ref{lema equivalencias h en N}, $A$ is a crossed ring over $N$. Since $A_e=\mathcal{Z}(A)=K$, $A$ is isomorphic to a twisted group algebra $K_\s N$, thus by Remark \ref{remark non-degenerate} $\s$ is a non-degenerate 2-cocycle.

Finally, if $x\in N$, then
\begin{align*}
    y\in A_x=Ku_x \ \leftrightarrow \ \ ya=(x\accion a)y \ \ \forall a \in A.
\end{align*}
Taking  $y=u_x$, we have $x\accion a=u_xau_{x}^{-1}$ for all $a\in A$.
\end{proof}

\subsection{The function $\g$}

The group $G$ acts over $K$ through the natural projection,
\begin{equation}\label{ecuacion proyeccion natural}
G\to G/N\to \Gal(K|k),\ \ g\mapsto\overline{g}.
\end{equation}

By item \emph{(iv)} in Proposition \ref{proposicion propiedades graduacion}, the action of $G$ over $A$ defines a function $\gamma:G\times N\to K^*$ determinate by the equation
\begin{equation}\label{ecua definicion gamma}
  g\accion u_x= \gamma(g,x)u_{\-^gx} \quad\quad (\-^gx:=gxg^{-1}).
\end{equation}

\begin{prop}
For all $g, h\in G$ and $x, y\in N,$ the function $\gamma:G\times N\to K^*$ defined in \eqref{ecua definicion gamma} satisfies the following equations:
\begin{align}
\intertext{ Condition C1:}
\gamma(x,y)\s(x,x^{-1})&=\s(x,y)\s(xy,x^{-1}).\label{condicion 1 gamma} \tag{C1}\\
\intertext{ Condition C2:} \bar{g}(\sigma(x,y))\gamma(g,xy)&=\sigma(\-^gx,\-^gy)\gamma(g,x)\gamma(g,y).\label{condicion 2 gamma} \tag{C2}\\
\intertext{ Condition C3:}
\gamma(gh,x)&=\bar{g}(\gamma(h,x))\gamma(g,\- ^hx).\label{condicion 3 gamma}\tag{C3}
\end{align}
\end{prop}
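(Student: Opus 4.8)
The plan is to derive the three identities directly from the defining equations \eqref{ecua definicion gamma}, namely $g\accion u_x=\gamma(g,x)u_{\-^gx}$, together with the multiplication rule $u_xu_y=\s(x,y)u_{xy}$ in the twisted group algebra and the fact that $G$ acts on $A=K_\s N$ by $k$-algebra automorphisms compatibly with the $K$-action through $g\mapsto\bar g$. The key structural inputs are Proposition \ref{prop A simeq K_sigma N como algebras} (so $A_x=Ku_x$ for $x\in N$, and $x\accion a=u_xau_x^{-1}$) and item (iv) of Proposition \ref{proposicion propiedades graduacion} (so $g\accion A_x=A_{\-^gx}$, which is what makes $\gamma$ well defined as a map into $K^*$).

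\textbf{Condition C3.} First I would establish the cocycle-type identity \eqref{condicion 3 gamma}, which is the easiest: since $G$ acts on $A$, we have $(gh)\accion u_x=g\accion(h\accion u_x)$. Expanding the right-hand side using \eqref{ecua definicion gamma} twice gives
\[
g\accion\bigl(\gamma(h,x)u_{\-^hx}\bigr)=\bar g(\gamma(h,x))\,\gamma(g,\-^hx)\,u_{\-^g(\-^hx)},
\]
where I used that $g$ acts $K$-semilinearly via $\bar g$. Comparing with $(gh)\accion u_x=\gamma(gh,x)u_{\-^{gh}x}$ and noting $\-^g(\-^hx)=\-^{gh}x$ yields \eqref{condicion 3 gamma} after cancelling the common basis element $u_{\-^{gh}x}$.

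\textbf{Condition C2.} Next I would prove the twisted-multiplicativity identity \eqref{condicion 2 gamma} by applying $g\accion(-)$ to the product $u_xu_y=\s(x,y)u_{xy}$ for $x,y\in N$. Since $g$ is an algebra automorphism, $g\accion(u_xu_y)=(g\accion u_x)(g\accion u_y)$; expanding both sides with \eqref{ecua definicion gamma} and $u_au_b=\s(a,b)u_{ab}$ gives
\[
\gamma(g,x)\gamma(g,y)\,\s(\-^gx,\-^gy)\,u_{\-^g(xy)}
=\bar g(\s(x,y))\,\gamma(g,xy)\,u_{\-^g(xy)},
\]
using $(\-^gx)(\-^gy)=\-^g(xy)$ and $K$-semilinearity of $g$ on the scalar $\s(x,y)$. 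Cancelling $u_{\-^g(xy)}$ gives \eqref{condicion 2 gamma}.

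\textbf{Condition C1.} Finally, \eqref{condicion 1 gamma} is the one tying $\gamma$ to the inner-automorphism description of the action of $N$ on itself, and I expect it to require the most care in bookkeeping the cocycle terms. For $x,y\in N$ we have $x\accion u_y=u_xu_yu_x^{-1}$ by Proposition \ref{prop A simeq K_sigma N como algebras}, and the left side equals $\gamma(x,y)u_{\-^xy}$ by \eqref{ecua definicion gamma}. One computes $u_x^{-1}=\s(x,x^{-1})^{-1}u_{x^{-1}}$ (from $u_xu_{x^{-1}}=\s(x,x^{-1})u_e$ and $u_e=1$ after the usual normalization $\s(x,x^{-1})=\s(x^{-1},x)$ up to a unit), so
\[
u_xu_yu_x^{-1}=\s(x,x^{-1})^{-1}\,u_xu_yu_{x^{-1}}
=\s(x,x^{-1})^{-1}\,\s(x,y)\,\s(xy,x^{-1})\,u_{xyx^{-1}}.
\]
Comparing scalars in front of $u_{\-^xy}=u_{xyx^{-1}}$ yields $\gamma(x,y)=\s(x,x^{-1})^{-1}\s(x,y)\s(xy,x^{-1})$, which is exactly \eqref{condicion 1 gamma} after clearing the denominator. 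The main obstacle is purely a normalization issue: one must fix the convention for $u_e$ and the inverse $u_x^{-1}$ consistently (and note $\s(x,x^{-1})$ need not equal $1$), so that the associativity of the triple product $u_xu_yu_{x^{-1}}$ is applied without sign or scalar errors; once that is pinned down, all three identities are immediate consequences of functoriality of the $G$-action and the algebra structure.
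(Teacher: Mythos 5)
Your proof is correct and follows essentially the same route as the paper: the paper's own proof simply observes that \eqref{condicion 1 gamma}, \eqref{condicion 2 gamma}, and \eqref{condicion 3 gamma} are equivalent to $x\accion u_y=u_xu_yu_x^{-1}$, $g\accion(u_xu_y)=(g\accion u_x)(g\accion u_y)$, and $g\accion(h\accion u_x)=gh\accion u_x$ respectively, which is exactly the structure of your argument. You merely spell out the scalar bookkeeping (in particular $u_x^{-1}=\s(x,x^{-1})^{-1}u_{x^{-1}}$ for C1) that the paper leaves implicit.
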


\begin{proof}
The conditions \eqref{condicion 1 gamma}, \eqref{condicion 2 gamma}, and \eqref{condicion 3 gamma}  are equivalent to
\begin{align*}
    x\accion  u_y&=u_x u_yu_x^{-1},\\
    g\accion  u_xu_y&=(g\accion u_x)(g\accion  u_y),\\
    g\accion (h \accion u_x)&=gh\accion u_x,
\end{align*}respectively.
\end{proof}

\begin{nota}\label{Remark N abeliano}
  If $N\subseteq \mathcal{Z}(G)$, the condition \eqref{condicion 1 gamma} is equivalent to
\begin{equation*}\label{condicion C1 Abeliano}
\gamma(x,y)=\frac{\s(x,y)}{\s(y,x)}=:\mathrm{Alt}_\s(x,y).
\end{equation*}
Moreover, if $\sigma (N,N),\g(G,N)\subseteq k^*$, the conditions \eqref{condicion 2 gamma} and \eqref{condicion 3 gamma} are equivalent to
\begin{align}
\gamma(g,xy)&=\gamma(g,x)\gamma(g,y)\label{condicion C2 valores en k}\\
\gamma(gh,x)&=\gamma(g,x)\gamma(h,x)\label{condicion C3 valores en k}
\end{align}
for all $g,h\in G$ and $x,y\in N$. A function $\g$ that satisfies \eqref{condicion C2 valores en k} and \eqref{condicion C3 valores en k} is called a \emph{pairing}.
\end{nota}

\section{Construction of $k^G$-Galois objects from Group-Theoretical data}\label{seccion 5}

In this section we define group-theoretical data associated to a finite group $G$ and a field $k$. We define a $k^G$-Galois object for each associate datum  to $G$ and $k$, and we establish an equivalence among the data, such that equivalent data define isomorphic Galois objects.

\subsection{Data associated with Galois objects}\label{subseccion datos equivalentes}
Let $G$ be a finite group, and let $k$ be a field.

\begin{defin}
A\emph{Galois datum associated to $k^G$} is a collection $(S,K,N,\s,\gamma)$ such that
\begin{enumerate}[$i)$]
\item $S$ is a subgroup of $G$ and $N$ is a normal subgroup of $S$.
\item $K\supseteq k$ is a Galois extension with Galois group $S/N$.
\item char$(k)\nshortmid |N|$.
\item $\s:N\times N\to K^*$ is a non-degenerate 2-cocycle.
\item $\gamma:S\times N\to K^*$  satisfies the equations \eqref{condicion 1 gamma}, \eqref{condicion 2 gamma}, and \eqref{condicion 3 gamma}.
\end{enumerate}
\end{defin}

Let $(S, K,N,\s,\gamma)$ be a Galois datum associated to $k^G$. We will denote by $A(K_\sigma N,\gamma)$ the twisted group algebra $K_\sigma N$  with $S$-action defined by
\begin{equation*}
g\accion \alpha u_x=(g\accion \alpha)(g\accion u_x)=\bar{g}(\alpha)\gamma(g,x)u_{\-^gx},
\end{equation*}
for $g\in S$, $x\in N$, and $\alpha\in K$.

We will denote by $\Ind_S^G(A(K_\sigma N,\gamma))$ the induced $G$-algebra from the $S$-algebra $A(K_\sigma N,\gamma)$.

\begin{prop}\label{prop dato implica algebra de galois simple}
Let $(S,K,N,\s,\gamma)$ be a Galois datum associated to $k^G$. The $S$-algebra $A(K_\sigma N,\gamma)$ is a simple $k^S$-Galois object, with $\mathcal{Z}(A(K_\sigma N,\gamma))=K$.
\end{prop}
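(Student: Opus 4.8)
The plan is to verify the three conditions of Proposition~\ref{caracterizacion A.Ga=ideales G-inv} for the $S$-algebra $A=A(K_\sigma N,\gamma)$, which would show it is a $k^S$-Galois object; simplicity and the computation of its center are essentially built into the Galois datum. First I would check the dimension condition: as a $k$-vector space, $A=K_\sigma N$ has $k$-dimension $[K:k]\cdot|N|=|S/N|\cdot|N|=|S|$, using that $K|k$ is Galois with group $S/N$. Next, $A^S=k$: an element $\alpha u_x$ that is $S$-invariant forces (looking at the grading by $N$) $x=e$, so invariants live in $K$, and then invariance under the $S$-action on $K$, which factors through $S/N=\Gal(K|k)$, gives $A^S=K^{S/N}=k$. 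For simplicity and the center, since char$(k)\nmid|N|$ and $\sigma$ is non-degenerate, the remark after Definition~\ref{defini intro} (via the Maschke theorem for twisted group algebras and Remark~\ref{remark non-degenerate}) tells us $K_\sigma N$ is a central simple $K$-algebra, so $\mathcal{Z}(A)=K$ and $A$ is simple.

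The real content is the second condition: $A$ has no nontrivial $S$-invariant left ideals. Here I would argue as follows. Since $A=K_\sigma N$ is central simple over $K$, its only two-sided ideals are $0$ and $A$. Let $J$ be a nonzero $S$-invariant left ideal. The subgroup $N\le S$ acts on $A$, and by the last statement of Proposition~\ref{prop A simeq K_sigma N como algebras} (whose proof applies here once one checks $\gamma|_{N\times N}$ satisfies C1, i.e. that $x\accion u_y=u_xu_yu_x^{-1}$), the $N$-action is by inner automorphisms via the units $u_x$. Therefore $N$-invariance of $J$ is automatic for any left ideal? No --- rather, I would use it the other way: because conjugation by $u_x$ is inner, $u_x J u_x^{-1}=J$ translates into $J$ being stable under left multiplication by $u_x$ composed with right multiplication by $u_x^{-1}$; combined with $J$ being a left ideal this shows $J$ is also stable under right multiplication by each $u_x$, hence a right ideal, hence a two-sided ideal, hence $J=A$. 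Alternatively, and more cleanly, one can invoke Corollary~\ref{prop teorema takeuchi importante}: $A$ is a simple algebra with center $K$, the subgroup of $S$ fixing $K$ pointwise is exactly $N$, and $K$ is a $k^{S/N}$-Galois object (being a Galois field extension with group $S/N$); so it suffices to show $A$ is a $K^N$-Galois object, where now $N$ acts on $A=K_\sigma N$ by the inner action $x\accion a=u_xau_x^{-1}$. For that inner action one checks $A^N=\mathcal{Z}(A)=K$ directly, and the canonical map is an isomorphism because $A\#K^N\to\End_K(A)$, $a\# t_x\mapsto (b\mapsto a u_x b u_x^{-1})$ (for $x\in N$) is surjective by a density/dimension count and both sides have $K$-dimension $|N|^2$.

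I expect the main obstacle to be the bookkeeping that ties the abstract Galois datum back to the Miyashita--Ulbrich picture: one must verify that condition C1 on $\gamma$ guarantees $x\accion u_y=u_xu_yu_x^{-1}$ so that the restriction of the $S$-action to $N$ really is the inner action coming from the twisted group algebra, and that conditions C2 and C3 make $g\accion(\alpha u_x)=\bar g(\alpha)\gamma(g,x)u_{{}^gx}$ a well-defined algebra action of all of $S$. These are exactly the equivalences recorded in the Proposition preceding Notation~\ref{Remark N abeliano}, so the verification is routine but must be invoked carefully. Once the inner-action identification is in place, the choice between the "two-sided ideal" argument and the Corollary~\ref{prop teorema takeuchi importante} route is a matter of taste; I would present the latter since it directly yields the $k^S$-Galois conclusion and simultaneously records $\mathcal{Z}(A)=K$. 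Finally I would note that Proposition~\ref{caracterizacion A.Ga=ideales G-inv} then applies verbatim, and semisimplicity (Corollary~\ref{algebra de galois semisimple}) is consistent with $A$ being the central simple algebra $K_\sigma N$.
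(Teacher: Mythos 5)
Your preferred route --- reducing via Corollary \ref{prop teorema takeuchi importante} to showing that $A(K_\sigma N,\gamma)$ is a $K^N$-Galois object (with $K$ a $k^{S/N}$-Galois object by classical Galois theory), and establishing the former by proving surjectivity of the smash-product map into $\End_K(A)$ from the central-simple-algebra isomorphism $A\otimes_K A^{\op}\simeq\End_K(A)$ plus a dimension count --- is exactly the paper's proof; note only that the relevant smash product is $A\# KN$ with the group algebra, not $A\# K^N$, in the setting of Proposition \ref{prop equivalencia galois}. The alternative you sketch (verifying the three conditions of Proposition \ref{caracterizacion A.Ga=ideales G-inv} directly, with $S$-invariant left ideals shown to be two-sided because $N$ acts by inner automorphisms via the $u_x$) is also valid, but it is not the argument the paper gives.
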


\begin{proof}

Since char$(k)\nshortmid |N|$, by Maschke's Theorem for twisted group algebra (see \cite[2.10 Theorem]{karpi}), the algebra $K_\sigma N$ is a semisimple algebra. Moreover, since $\sigma$ is non-degenerate then $ \mathcal{Z}(K_\sigma N)=K$. Hence $K_\sigma N$ is a central simple algebra over $K$.

Let us denote  by $A$ the $S$-algebra $A(K_\sigma N,\gamma)$. If we see that $A$ is a $K^N$-Galois object, then $A$ is a $k^S$-Galois object by Corollary  \ref{prop teorema takeuchi importante}.

Recall, if $A$ is a central simple algebra over $K$, the map
\begin{align*}
    A\otimes_K A^{op}&\to \End_K(A)\\
    a\otimes b&\mapsto [t\mapsto atb]
\end{align*}
is an isomorphism (see \cite[pag. 32]{GS}). The map $\theta_N:A\# N\to \End_K(A)$,  $\theta_N(a\#x)(b)=a(x\accion b)$ is surjective, since that  $$\theta_N(u_xu_y\#y^{-1})(a) = u_xau_y.$$ Now, $\dim_K(A\# N)= \dim_K (\End_K(A))$, hence $\theta_N$ is bijective. Thus,  $A$ is an $K^N$-Galois object by Proposition \ref{prop equivalencia galois}.
\end{proof}

\subsection{Equivalence of Galois data}

\begin{defin}
We will say that $(S,K,N,\s,\gamma)$ and $(S',K',N',\s',\gamma')$ are \emph{equivalent Galois data associated to $k^G$} if there exists a $G$-algebra isomorphism between $\Ind_S^G(A(K_\sigma N,\gamma))$ and $\Ind_{S'}^G(A(K'_{\sigma'} N',\gamma')).$

\end{defin}
\begin{defin}
Let $G$ be a finite group, $S$ a subgroup of $G$ and $(A,\cdot)$ an $S$-algebra. For each $g\in G$, we considerer the $g^{-1}Sg$-algebra $(A^{(g)},\cdot_g)$, such that $A^{(g)}=A$ as algebras and $g^{-1}Sg$-action give by $$h\cdot_g a=(g hg^{-1})\cdot a,$$
for all $h\in g^{-1}Sg$ and $a\in A^{(g)}$.
\end{defin}

\begin{lem}\label{lema isomorfismo Ind_S^G(A) and Ind_g(-1)Sg^G(A^(g))}
  Let $G$ be a finite group and $S$ be a subgroup of $G$. For all $g\in G$, there exists a $G$-algebra isomorphism between $\Ind_{S}^G{(A)}$ and $\Ind_{g^{-1}Sg}^G(A^{(g)})$.
\end{lem}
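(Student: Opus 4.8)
The plan is to write down an explicit map between the two induced algebras and check it is a $G$-algebra isomorphism, using the function-algebra model $A_S(G,B)$ for the induced algebra rather than the tensor-product model. Recall $\Ind_S^G(A)\simeq A_S(G,A)=\{r\colon G\to A \mid r(sg)=s\cdot r(g)\ \forall s\in S\}$ with $G$-action $(g\accion r)(x)=r(xg)$, and similarly $\Ind_{g^{-1}Sg}^G(A^{(g)})=\{r\colon G\to A^{(g)} \mid r(hx)=h\cdot_g r(x)\ \forall h\in g^{-1}Sg\}$. First I would fix $g\in G$ and define
\[
\Phi\colon A_S(G,A)\longrightarrow A_{g^{-1}Sg}(G,A^{(g)}),\qquad \Phi(r)(x)=r(gx),
\]
for $x\in G$. (Multiplication by $g$ on the left permutes the right cosets $S\backslash G$, which is the geometric reason such a map should exist.)

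The key steps, in order, are: (1) check that $\Phi(r)$ lands in the correct space, i.e. that $\Phi(r)(hx)=h\cdot_g \Phi(r)(x)$ for $h\in g^{-1}Sg$; this is where the twisted action $\cdot_g$ enters, since $\Phi(r)(hx)=r(ghx)=r\big((ghg^{-1})gx\big)=(ghg^{-1})\cdot r(gx)=h\cdot_g r(gx)=h\cdot_g\Phi(r)(x)$, using the defining relation for $A_S(G,A)$ in the third equality and the definition of $\cdot_g$ in the fourth. (2) Check $\Phi$ is an algebra homomorphism: since $A^{(g)}=A$ as an algebra and multiplication in both function algebras is pointwise, $\Phi(rr')(x)=(rr')(gx)=r(gx)r'(gx)=\Phi(r)(x)\Phi(r')(x)$, and $\Phi$ clearly sends the constant function $1$ to the constant function $1$. (3) Check $\Phi$ is $G$-equivariant: $\Phi(g'\accion r)(x)=(g'\accion r)(gx)=r(gxg')=\Phi(r)(xg')=(g'\accion\Phi(r))(x)$ for $g'\in G$. (4) Check $\Phi$ is bijective, either by exhibiting the inverse $\Psi(r')(x)=r'(g^{-1}x)$ (noting that $(A^{(g)})^{(g^{-1})}$ is naturally $A$ again as an $S$-algebra) or simply by observing that $\Phi$ is injective since left translation by $g$ is surjective on $G$, and then counting dimensions using Lemma \ref{lema inducida=B y algebra invariantes}, which gives $\dim\Ind_S^G(A)=[G:S]\dim A=[G:g^{-1}Sg]\dim A^{(g)}=\dim\Ind_{g^{-1}Sg}^G(A^{(g)})$.

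I do not expect a serious obstacle here: the only place requiring care is bookkeeping the side on which $g$ acts and making sure the conjugation $h\mapsto ghg^{-1}$ is inserted with the correct orientation so that $\Phi(r)$ actually satisfies the equivariance constraint defining $A_{g^{-1}Sg}(G,A^{(g)})$ — this is step (1), and it is the "main obstacle" only in the sense that a wrong convention there propagates everywhere. Everything else is a routine pointwise verification, so I would state the map, verify (1) in a line or two, and leave the remaining checks as straightforward.
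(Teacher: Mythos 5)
Your proposal is correct and uses exactly the same map as the paper: the paper defines $\psi_g(f)=[h\mapsto f(gh)]$ on the function-algebra model $A_S(G,A)$ and leaves the verifications as routine, which are precisely the checks (1)--(4) you carry out. The conjugation bookkeeping in your step (1) is the right orientation, so nothing is missing.
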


\begin{proof}
 For each $g\in G$, considerer the map
\begin{align*}
  \psi_g:\Ind_S^G(A)&\to \Ind_{g^{-1}Sg}^G(A^{(g)})\\
f&\mapsto \psi_g(f)=[h\mapsto f(gh)].
\end{align*}
It is easy to check that $\psi_g$ is a $G$-algebra isomorphism.
\end{proof}

Recall that two transitive $G$-sets $G/S$ and $G/S'$ are isomorphic if and only if there exists $g\in G$ such that $S'=g^{-1}Sg$.

\begin{lem}\label{lema inducidas isomorfas}
  Let $S, S'$ be two subgroups of $G$, let $A$ be a simple $S$-algebra, and $B$ be a simple $S'$-algebra. Then there exists a $G$-algebra isomorphism between $\Ind_S^G(A)$ and $\Ind_{S'}^G(B)$ if and only if $S=g^{-1}S'g$ and $A\simeq B^{(g)}$ as $S$-algebras.
\end{lem}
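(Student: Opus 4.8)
The plan is to prove Lemma \ref{lema inducidas isomorfas} by combining the general theory of imprimitive algebras (Proposition \ref{caracterizacion algebras imprimitivas}) with the twisting Lemma \ref{lema isomorfismo Ind_S^G(A) and Ind_g(-1)Sg^G(A^(g))}. For the backward direction, assume $S = g^{-1}S'g$ and $A \simeq B^{(g)}$ as $S$-algebras. Then $\Ind_S^G(A) \simeq \Ind_S^G(B^{(g)}) = \Ind_{g^{-1}S'g}^G(B^{(g)})$ by functoriality of induction (Remark \ref{nota ind is a functor}), and this is isomorphic to $\Ind_{S'}^G(B)$ by Lemma \ref{lema isomorfismo Ind_S^G(A) and Ind_g(-1)Sg^G(A^(g))}. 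This direction is essentially immediate.

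The forward direction is the substantive one. Suppose $\varphi: \Ind_S^G(A) \to \Ind_{S'}^G(B)$ is a $G$-algebra isomorphism. The key observation is that the canonical imprimitive systems $\{e_i\}$ in $\Ind_S^G(A)$ and $\{e'_j\}$ in $\Ind_{S'}^G(B)$ (Notation \ref{nota sistema imprimitivo canonico}) consist of the \emph{primitive} central idempotents of the respective algebras, precisely because $A$ and $B$ are simple, so each block $e_i\Ind_S^G(A) \simeq A$ and $e'_j\Ind_{S'}^G(B) \simeq B$ is a simple algebra (Lemma \ref{lema inducida=B y algebra invariantes}). An algebra isomorphism must send primitive central idempotents to primitive central idempotents, so $\varphi$ permutes these blocks; being a $G$-algebra map, it intertwines the $G$-actions on the two imprimitive systems, hence induces an isomorphism of the transitive $G$-sets $G/S \cong \{e_i\} \to \{e'_j\} \cong G/S'$. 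By the recalled fact about transitive $G$-sets, there exists $g \in G$ with $S = g^{-1}S'g$, and we may normalize the numbering so that $\varphi(e_1) = e'_{j_0}$ where $e'_{j_0}$ corresponds to the coset fixed by $S = g^{-1}S'g$.

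Restricting $\varphi$ to the blocks then gives an algebra isomorphism $e_1\Ind_S^G(A) \to e'_{j_0}\Ind_{S'}^G(B)$, i.e. an algebra isomorphism $A \to B$; the point is to track how the $G$-equivariance of $\varphi$ translates into $S$-equivariance for the appropriate twisted action. Concretely, the stabilizer $S$ acts on the block $e_1\Ind_S^G(A)$ by the original $S$-action on $A$, while the stabilizer of $e'_{j_0}$ acts on its block by the $S'$-action on $B$ transported along $G$-conjugation by $g$ — which is exactly the $g^{-1}S'g$-action defining $B^{(g)}$. So $\varphi$ restricted to the first block is an $S$-algebra isomorphism $A \to B^{(g)}$, as required. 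The main obstacle I expect is the bookkeeping in this last step: making the identification of the stabilizer action on $e'_{j_0}\Ind_{S'}^G(B)$ with the $B^{(g)}$-action precise, which is cleanest if one first uses Lemma \ref{lema isomorfismo Ind_S^G(A) and Ind_g(-1)Sg^G(A^(g))} to replace $\Ind_{S'}^G(B)$ by $\Ind_{g^{-1}S'g}^G(B^{(g)}) = \Ind_S^G(B^{(g)})$ at the outset, reducing to the case $S = S'$, where a $G$-algebra automorphism-type isomorphism fixing the canonical block restricts to an honest $S$-algebra isomorphism $A \to B^{(g)}$.
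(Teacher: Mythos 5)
Your proposal is correct and follows essentially the same route as the paper: both directions use the canonical imprimitive systems, the observation that simplicity of $A$ and $B$ makes these the primitive central idempotents so that the isomorphism induces a $G$-set isomorphism $G/S\to G/S'$, and then composition with $\psi_g$ from Lemma \ref{lema isomorfismo Ind_S^G(A) and Ind_g(-1)Sg^G(A^(g))} to restrict to the distinguished block and obtain the $S$-algebra isomorphism $A\simeq B^{(g)}$. If anything, you are slightly more explicit than the paper about the normalization needed to ensure the distinguished block maps to the distinguished block.
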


\begin{proof}
  Let $\{e_i\}_{1\leq i\leq[G:S]}$, $\{r_i\}_{1\leq i\leq[G:S']}$ and $\{e'_i\}_{1\leq i\leq[G:g^{-1}S'g]}$ be the canonical imprimitive systems associated to $\Ind_S^G(A)$, $\Ind_{S'}^G(B)$ and $\Ind_{g^{-1}S'g}^G(B^{(g)})$ (see Remark \ref{nota sistema imprimitivo canonico}).

Suppose that $\chi:\Ind_S^G(A)\to \Ind_{S'}^G(B)$ is a $G$-algebra isomorphism. Since that $A$ and $B$ are simple algebras, the elements in $\{e_i\}_{1\leq i\leq[G:S]}$, and $\{r_i\}_{1\leq i\leq[G:S']}$ are the central primitive idempotents for $\Ind_S^G(A)$, and $\Ind_{S'}^G(B)$, respectively. Then $\chi$  induce a $G$-set isomorphism between $\{e_i\}\simeq G/S$ and $\{r_i\}\simeq G/S'$. Hence, there exist $g\in G$ such that $S=g^{-1}S'g$.

Now, by the Lemma \ref{lema isomorfismo Ind_S^G(A) and Ind_g(-1)Sg^G(A^(g))} we have the following $G$-algebra isomorphism
$$\Ind_S^G(A)\stackrel{\chi}{\longrightarrow}\Ind_{S'}^G(B)
\stackrel{\psi_g}{\longrightarrow}\Ind_{g^{-1}S'g}^G(B^{(g)}), $$
then we have that
$$A\simeq e_1\Ind_{S}^G(A)\stackrel{\psi_g\circ \chi}{\longleftrightarrow} e'_1\Ind_{S}^G(B^{(g)})\simeq B^{(g)}. $$

Conversely, suppose that $S=g^{-1}S'g$ and $A\simeq B^{(g)}$ as $S$-algebras. We can construct a $G$-algebra isomorphism between $\Ind_S^G(A)$ and $\Ind_{S}^G(B^{(g)})$, using the Remark \ref{nota ind is a functor}  and the Lemma \ref{lema isomorfismo Ind_S^G(A) and Ind_g(-1)Sg^G(A^(g))}.
\end{proof}

Let $f:A\to A'$ be an $S$-algebra isomorphism between  $A=A(K_\sigma N,\gamma)$ and $A'=A(K'_{\sigma'} N',\gamma')$. By the definition of the Miyashita-Ulbrich action, $f$ is an $S$-graded algebra isomorphism. Since  $N=\{x\in S| A_x\neq 0\}$ (see Lemma \ref{lema equivalencias h en N}), then $N=N'$. Moreover,  $f|_{K}$ is a field isomorphism, so we can suppose without loss of generality that  $K=K'$.

\begin{prop}\label{prop equivalencia simple}
The Galois data $(S,K,N,\sigma,\gamma)$ and $(S,K,N,\s',\gamma')$ associated to $k^S$ are equivalent if and only if there are $\omega\in \mathcal{Z}(\Gal(K|k))$, and $\eta:N\to K^*$, such that
\begin{align}
\omega(\sigma(x,y))\eta(xy) &= \eta(x)\eta(y)\sigma'(x,y) \label{condiciones datos equivalentes 1}\\
    \omega(\gamma(s,x))\eta(\-^sx) &= \bar{s}(\eta(x))\gamma'(s,x),\label{condiciones datos equivalentes 2},
\end{align}for all $x,y\in N$, $s\in S$.
\end{prop}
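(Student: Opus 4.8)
The plan is to unwind the definitions and reduce everything to an explicit comparison of twisted group algebras carrying an $S$-action. By the remarks preceding the statement, for data over $k^S$ an equivalence is nothing but an $S$-algebra isomorphism $f\colon A\to A'$, where $A=A(K_\sigma N,\gamma)$ and $A'=A(K_{\sigma'}N,\gamma')$ (here $\Ind_S^S$ is, up to isomorphism, the identity functor; this also follows from Lemma~\ref{lema inducidas isomorfas} once one notes that $B^{(g)}\simeq B$ as $S$-algebras for $g\in S$), and any such $f$ is automatically a graded isomorphism for the Miyashita--Ulbrich gradings, $f(A_g)=A'_g$ for all $g\in S$. I will use repeatedly that $A_e=\mathcal{Z}(A)=K$ and $A_x=Ku_x$ for $x\in N$, each one-dimensional over $K$.

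For the ``only if'' direction I would start from such an $f$ and set $\omega:=f|_K$. Since $f$ maps $\mathcal{Z}(A)$ isomorphically onto $\mathcal{Z}(A')$, $\omega$ is a $k$-algebra automorphism of $K$, so $\omega\in\Gal(K|k)$; evaluating the $S$-equivariance of $f$ on $\alpha\in K$ gives $\omega(\bar s(\alpha))=\bar s(\omega(\alpha))$ for all $s\in S$, and since $s\mapsto\bar s$ maps $S$ onto $\Gal(K|k)=S/N$ this is precisely $\omega\in\mathcal{Z}(\Gal(K|k))$. For $x\in N$, $f(u_x)$ lies in $A'_x=Ku_x$ and is invertible, so $f(u_x)=\eta(x)u_x$ with $\eta(x)\in K^*$, which defines $\eta\colon N\to K^*$. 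I would then apply $f$ to the relations $u_xu_y=\sigma(x,y)u_{xy}$ and $s\accion u_x=\gamma(s,x)u_{sxs^{-1}}$: using that $\eta$, $\sigma'$ and $\gamma'$ are $K$-valued and $K$ is central, the first relation yields \eqref{condiciones datos equivalentes 1} and the second, combined with the $S$-equivariance of $f$, yields \eqref{condiciones datos equivalentes 2}.

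For the ``if'' direction I would, conversely, define $f\colon A\to A'$ on the canonical $K$-basis by $f(\alpha u_x)=\omega(\alpha)\eta(x)u_x$, extended additively with $f(\beta m)=\omega(\beta)f(m)$ for $\beta\in K$; this is visibly bijective. Equation \eqref{condiciones datos equivalentes 1} is exactly the identity that makes $f$ multiplicative (compare $f(u_xu_y)$ with $f(u_x)f(u_y)$, all remaining scalars being central in $A'$), and it also forces $\eta(e)=1$, so $f$ is unital. Finally, the hypothesis $\omega\in\mathcal{Z}(\Gal(K|k))$ says $\omega\circ\bar s=\bar s\circ\omega$ on $K$, and together with \eqref{condiciones datos equivalentes 2} this gives $f(s\accion a)=s\accion f(a)$ for all $a\in A$, $s\in S$; hence $f$ is an $S$-algebra isomorphism and the two data are equivalent.

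I do not expect a genuine obstacle: after the reduction above the argument is bookkeeping. The one point deserving care is the emergence of $\omega=f|_K$ and the observation that $S$-equivariance forces it to centralize the whole Galois group $S/N\cong\Gal(K|k)$ — this is precisely why the condition in the statement is $\omega\in\mathcal{Z}(\Gal(K|k))$ rather than merely $\omega\in\Gal(K|k)$ — together with, in the converse, matching the two displayed equations to the correct requirements (multiplicativity and $S$-equivariance, respectively).
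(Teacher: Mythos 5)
Your proposal is correct and follows essentially the same route as the paper: extract $\omega=f|_K$ and $\eta$ from the (automatically graded) $S$-algebra isomorphism $f$ with $f(u_x)=\eta(x)u'_x$, and conversely define $f(\alpha u_x)=\omega(\alpha)\eta(x)u'_x$; the paper leaves the computations as "straightforward," which you carry out. Your explicit observation that $S$-equivariance on $K$ is exactly what forces $\omega\in\mathcal{Z}(\Gal(K|k))$ is a worthwhile detail the paper does not spell out.
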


\begin{proof}
Let $\{u_x\}$ and $\{u'_x\}$ be a $K$-basis of $A$ and $A'$, see Remark \ref{obs base twisted}.

Suppose that $f:A(K_\sigma N, \gamma)\to A(K_{\sigma'}N,\gamma')$ be an $S$-algebra isomorphism. Considerer $\omega :=f|_{K}\in \mathcal{Z}(\text{Gal}(K|k))$, and $\eta:N\to K^*$ determined by the equation $$f(u_x)=\eta(x)u'_x\quad (\forall x\in N).$$
A straightforward computation shows that $\omega$ and $\eta$ satisfy the equations \eqref{condiciones datos equivalentes 1}, and \eqref{condiciones datos equivalentes 2}.

Conversely, it is easy to check that if $\omega$ and $\eta$ satisfy the equations \eqref{condiciones datos equivalentes 1}, \eqref{condiciones datos equivalentes 2}, then $f(\alpha u_x) :=\omega(\alpha)\eta(x)u'_x$ is an $S$-algebra isomorphism.
\end{proof}
\begin{lem}\label{lema B(g)}
Let $(S,K,N,\s,\g)$ be a Galois datum associated to $k^G$. For every $g\in G$, the Galois datum associated to $A(K_{\s}N,\g)^{(g)}$ is given by $(g^{-1}Sg,K,N,\s^{(g)},\g^{(g)})$, where $\s^{(g)}(x,y)=\s(gxg^{-1},gyg^{-1})$, and $\g^{(g)}(h,x)=\gamma(ghg^{-1},gxg^{-1})$, for all $x, y \in N$, $h\in g^{-1}Sg$.
\end{lem}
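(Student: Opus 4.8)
The plan is to unwind the two definitions involved — the twist $(-)^{(g)}$ of an $S$-algebra into a $g^{-1}Sg$-algebra, and the way the Galois datum $(S,K,N,\s,\g)$ encodes the $S$-action on $K_\s N$ via the graded structure — and then simply read off the cocycle $\s^{(g)}$ and the function $\g^{(g)}$ from the $g^{-1}Sg$-action on $A(K_\s N,\g)^{(g)}$. First I would observe that $A^{(g)}=A(K_\s N,\g)$ as a $K$-algebra is unchanged, so the underlying twisted group algebra is still $K_\s N$ with basis $(u_x)_{x\in N}$ and the \emph{same} multiplication $u_xu_y=\s(x,y)u_{xy}$; in particular the cocycle computed from the multiplication is still $\s$. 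But the \emph{grading} changes: by Proposition \ref{proposicion propiedades graduacion}(iv) the degree-$x$ component of $A^{(g)}$ with respect to the $g^{-1}Sg$-action is $\{a\in A\mid ab=(h\cdot_g b)a\ \forall b\}$, and since $h\cdot_g b=(ghg^{-1})\accion b$, this is exactly the set on which $ab=((ghg^{-1})\accion b)a$. So the Miyashita-Ulbrich grading index gets conjugated by $g$. This is the one genuinely delicate point — keeping straight that the grading group of $A^{(g)}$ is again $N$ (a normal subgroup of $g^{-1}Sg$, since $N$ is normal in $S$), but that the labelling is transported along $x\mapsto gxg^{-1}$.

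Next I would make a bookkeeping choice of basis for $A^{(g)}$ as a Galois-datum algebra: the Galois datum attached to an $S$-algebra $B$ is extracted (by Proposition \ref{prop A simeq K_sigma N como algebras} and the definition of $\g$ in \eqref{ecua definicion gamma}) from a choice of $K$-basis $(v_y)_{y}$ with $v_y$ spanning the degree-$y$ part. For $A^{(g)}$, the degree-$x$ part (in the $g^{-1}Sg$-grading) is $Ku_{gxg^{-1}}$ by the conjugation observation above, so the natural normalized basis is $v_x:=u_{gxg^{-1}}$. With this choice the multiplication reads $v_xv_y=u_{gxg^{-1}}u_{gyg^{-1}}=\s(gxg^{-1},gyg^{-1})\,u_{gxyg^{-1}}=\s^{(g)}(x,y)\,v_{xy}$, which identifies the cocycle of the new datum as $\s^{(g)}(x,y)=\s(gxg^{-1},gyg^{-1})$. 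For $\g^{(g)}$, take $h\in g^{-1}Sg$ and compute, using the definition of $\cdot_g$ and \eqref{ecua definicion gamma},
\[
h\cdot_g v_x = (ghg^{-1})\accion u_{gxg^{-1}} = \gamma\bigl(ghg^{-1},\,gxg^{-1}\bigr)\,u_{\,(ghg^{-1})(gxg^{-1})(ghg^{-1})^{-1}} = \gamma\bigl(ghg^{-1},gxg^{-1}\bigr)\,u_{g(hxh^{-1})g^{-1}},
\]
and since $u_{g(hxh^{-1})g^{-1}}=v_{\,{}^hx}$, comparing with the defining equation $h\cdot_g v_x=\g^{(g)}(h,x)v_{\,{}^hx}$ gives $\g^{(g)}(h,x)=\gamma(ghg^{-1},gxg^{-1})$, as claimed. (The action of $h\in g^{-1}Sg$ on $K$ used here is $\bar h$, i.e.\ $(ghg^{-1})$ acting through $S/N\to\Gal(K|k)$; this is consistent because $N$ acts trivially on $K$ and $gNg^{-1}=N$.)

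Finally I would record that $(g^{-1}Sg,K,N,\s^{(g)},\g^{(g)})$ really is a Galois datum associated to $k^G$: $N$ is normal in $g^{-1}Sg$ and $(g^{-1}Sg)/N\cong S/N=\Gal(K|k)$; $\operatorname{char}(k)\nshortmid|N|$ is unchanged; $\s^{(g)}$ is non-degenerate because conjugation by $g$ is an automorphism of $N$ carrying $\s$-regular classes to $\s^{(g)}$-regular classes, so $\{1\}$ remains the only one; and $\g^{(g)}$ satisfies \eqref{condicion 1 gamma}, \eqref{condicion 2 gamma}, \eqref{condicion 3 gamma} either by substituting conjugates directly into those identities, or more cheaply by noting that these three conditions are equivalent (by the Proposition following \eqref{ecua definicion gamma}) to the assertion that the formulas $h\cdot_g\alpha u_x=\bar h(\alpha)\g^{(g)}(h,x)u_{\,{}^hx}$ define a genuine $g^{-1}Sg$-action by algebra automorphisms — and that is automatic since $\cdot_g$ is an action by construction. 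The main obstacle is purely notational: tracking the conjugations $x\mapsto gxg^{-1}$ through the grading, the basis, and the three cocycle conditions without sign/index errors; there is no conceptual difficulty once the basis $v_x=u_{gxg^{-1}}$ is fixed.
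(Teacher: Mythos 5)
Your argument is correct and is essentially the paper's own proof, which likewise rests on the identification $(A^{(g)})_x = A_{gxg^{-1}}$, takes $\{u_{gxg^{-1}}\}_{x\in N}$ as the adapted homogeneous basis, and reads off $\s^{(g)}$ and $\g^{(g)}$ from the multiplication and the twisted action; you merely spell out the computations and the verification of the datum axioms that the paper leaves implicit. (One shared imprecision: both you and the paper tacitly identify the grading group $g^{-1}Ng$ of $A^{(g)}$ with $N$ via conjugation by $g$ --- strictly speaking $N$ is only normal in $S$ and need not be stable under an arbitrary $g\in G$, so your parenthetical ``$N$ is a normal subgroup of $g^{-1}Sg$ since $N$ is normal in $S$'' is really a statement about $g^{-1}Ng$ --- but this relabelling is harmless and is exactly how the lemma is used in the equivalence theorem that follows.)
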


\begin{proof}
Let $A=A(K_{\s}N,\g)$. Note that $(A^{(g)})_x= A_{gxg^{-1}}$, then $\{u_{gxg^{-1}}| x\in N\}$ is a basis of $A^{(g)}$, where $u_{gxg^{-1}}\in (A^{(g)})_x$. Hence, $\s^{(g)}(x,y)=\s(gxg^{-1},gyg^{-1})$, and $\g^{(g)}(h,x)=\gamma(ghg^{-1},gxg^{-1})$, for all $x, y \in N$, $h\in g^{-1}Sg$.
\end{proof}

\begin{teor}
  The data $(S,K,N,\s,\g)$ and $(S',K,N,\s',\g')$ are equivalent if and only if there exist $g\in G$, $\omega\in \mathcal{Z}(\Gal(K|k))$ and $\eta:N\to K^*$ such that $S=g^{-1}S'g$ and
\begin{align*}
\omega(\sigma(x,y))\eta(xy) &= \eta(x)\eta(y)\sigma'(gxg^{-1},gyg^{-1}),\\
    \omega(\gamma(s,x))\eta(\-^sx) &= \bar{s}(\eta(x))\gamma'(gsg^{-1},gxg^{-1}),
\end{align*}
for all $x,y\in N$, $s\in S$.
\end{teor}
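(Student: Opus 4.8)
The strategy is to reduce the statement to the two lemmas already at hand: Lemma \ref{lema inducidas isomorfas}, which characterizes when two induced Galois objects are isomorphic as $G$-algebras in terms of a conjugating element $g$ and an isomorphism $A\simeq B^{(g)}$ of $S$-algebras, and Proposition \ref{prop equivalencia simple}, which translates isomorphism of simple Galois objects $A(K_\sigma N,\gamma)$ over a \emph{fixed} subgroup into the cocycle equations \eqref{condiciones datos equivalentes 1} and \eqref{condiciones datos equivalentes 2}. The bridge between them is Lemma \ref{lema B(g)}, which computes the Galois datum of the conjugated $S$-algebra $A(K_{\sigma'}N',\gamma')^{(g)}$ as $(g^{-1}S'g,K,N,(\sigma')^{(g)},(\gamma')^{(g)})$ with $(\sigma')^{(g)}(x,y)=\sigma'(gxg^{-1},gyg^{-1})$ and $(\gamma')^{(g)}(h,x)=\gamma'(ghg^{-1},gxg^{-1})$.

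First I would unwind the definition of equivalence: $(S,K,N,\sigma,\gamma)$ and $(S',K,N,\sigma',\gamma')$ are equivalent exactly when $\Ind_S^G(A(K_\sigma N,\gamma))\simeq \Ind_{S'}^G(A(K_{\sigma'}N',\gamma'))$ as $G$-algebras. Both simple $S$- (resp.\ $S'$-) algebras are simple by Proposition \ref{prop dato implica algebra de galois simple}, so Lemma \ref{lema inducidas isomorfas} applies and tells us this holds iff there is $g\in G$ with $S=g^{-1}S'g$ and $A(K_\sigma N,\gamma)\simeq A(K_{\sigma'}N',\gamma')^{(g)}$ as $S$-algebras. (As remarked in the paragraph preceding Proposition \ref{prop equivalencia simple}, any such $S$-algebra isomorphism forces $N=N'$ and allows us to identify the two fields, which is why the statement is phrased with a common $N$ and $K$.)

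Next I would apply Lemma \ref{lema B(g)} to rewrite the target: $A(K_{\sigma'}N',\gamma')^{(g)}$ is, as an $S$-algebra with $S=g^{-1}S'g$, precisely $A(K_{(\sigma')^{(g)}}N,(\gamma')^{(g)})$. Now both $A(K_\sigma N,\gamma)$ and $A(K_{(\sigma')^{(g)}}N,(\gamma')^{(g)})$ are simple Galois objects attached to data over the \emph{same} subgroup $S$, so Proposition \ref{prop equivalencia simple} gives that they are isomorphic as $S$-algebras iff there exist $\omega\in\mathcal Z(\Gal(K|k))$ and $\eta:N\to K^*$ satisfying
\begin{align*}
\omega(\sigma(x,y))\eta(xy) &= \eta(x)\eta(y)\,(\sigma')^{(g)}(x,y),\\
\omega(\gamma(s,x))\eta({}^sx) &= \bar s(\eta(x))\,(\gamma')^{(g)}(s,x),
\end{align*}
for all $x,y\in N$ and $s\in S$. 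Substituting the formulas $(\sigma')^{(g)}(x,y)=\sigma'(gxg^{-1},gyg^{-1})$ and $(\gamma')^{(g)}(s,x)=\gamma'(gsg^{-1},gxg^{-1})$ yields exactly the two displayed equations in the statement, and chaining the equivalences finishes the proof.

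The only point requiring care — and the one I'd expect to be the main obstacle — is bookkeeping around the identifications $N=N'$ and $K=K'$: Proposition \ref{prop equivalencia simple} is stated for data sharing a fixed $N$ and $K$, whereas a priori the two data in the theorem could have different normal subgroups and different (abstract) Galois extensions. One must verify that the $G$-algebra isomorphism, restricted via the canonical imprimitive system to an $S$-algebra isomorphism $A(K_\sigma N,\gamma)\xrightarrow{\sim}A(K_{\sigma'}N',\gamma')^{(g)}$, is automatically graded (this is immediate since the grading is intrinsic, defined through the Miyashita-Ulbrich action), hence carries $N=\{x\in S:A_x\neq0\}$ onto $N'$, and its restriction to $A_e=\mathcal Z(A)=K$ is the field isomorphism $\omega$ landing in $\Gal(K|k)$, which is central because it must commute with the $S/N$-action on $K$. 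Once these identifications are in place, the rest is the purely formal composition of the three cited results.
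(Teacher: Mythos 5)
Your proposal is correct and follows essentially the same route as the paper: the paper's own proof is precisely the one\-/line observation that the statement follows from Lemma \ref{lema B(g)}, Lemma \ref{lema inducidas isomorfas}, and Proposition \ref{prop equivalencia simple} because the algebras involved are simple, and you have simply filled in the chaining of these three results (together with the bookkeeping on $N=N'$ and $K=K'$ that the paper handles in the paragraph preceding Proposition \ref{prop equivalencia simple}).
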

\begin{proof}
Since $A(K_{\s}N,\g)$ and $A(K_{\s'}N,\g')$ are simple algebras, the proof of theorem follows by Lemma \ref{lema B(g)}, Lemma \ref{lema inducidas isomorfas}, and Proposition \ref{prop equivalencia simple}.
\end{proof}

\section{Classification of $k^G$-Galois objects}\label{seccion 6}

In this section we aim to give a proof of our main result, \textit{i.e.}, the Theorem \ref{main result}.

\begin{lem}\label{lema K_sigma N galois entoces char no divide orden}
Let $(S,K,N,\s,\gamma)$ be a Galois datum associated to $k^S$. If $A(K_\sigma N,\gamma)$ is a $k^S$-Galois object then char$(K)\nshortmid |N|$. In particular, char$(k)\nshortmid |N|.$
\end{lem}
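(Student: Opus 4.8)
The plan is to show that if $A(K_\sigma N,\gamma)$ is a $k^S$-Galois object then $\car(K)\nshortmid|N|$, and then deduce the statement about $\car(k)$ from the fact that $K$ is a field extension of $k$, so $\car(K)=\car(k)$.

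First I would recall from Proposition~\ref{semisimple galois} (or the Remark following it) that any $H$-Galois object $A$ satisfies $\dim_k A=\dim_k H$; applied with $H=k^S$ this gives $\dim_k A(K_\sigma N,\gamma)=|S|$. On the other hand $A(K_\sigma N,\gamma)=K_\sigma N$ as an algebra, and since $[K:k]=|S/N|=|S|/|N|$ (from item~$ii)$ of the Galois datum: $K\supseteq k$ is Galois with group $S/N$, which forces $K/k$ finite of that degree) we get $\dim_k K_\sigma N=[K:k]\cdot|N|=|S|$, consistent; the key point is rather that $A(K_\sigma N,\gamma)$ is a $k^S$-Galois object, hence semisimple. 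Indeed, by Corollary~\ref{algebra de galois semisimple} every $k^S$-Galois object is a semisimple algebra, so $K_\sigma N$ is semisimple (over $k$, equivalently over its center, which contains $K$).

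Next I would invoke Maschke's theorem for twisted group algebras in its converse form: the twisted group algebra $K_\sigma N$ over the field $K$ is semisimple if and only if $\car(K)\nshortmid|N|$ (see \cite[Theorem~2.10, pag~85]{karpi}, which is the same reference used for the forward direction elsewhere in the paper). Since $K_\sigma N$ is semisimple by the previous paragraph, the converse direction of Maschke forces $\car(K)\nshortmid|N|$. One small point to address: semisimplicity of $K_\sigma N$ as a $k$-algebra versus as a $K$-algebra. But $K$ is separable over $k$ (it is Galois over $k$), so $K_\sigma N$ is semisimple as a $k$-algebra iff it is semisimple as a $K$-algebra; alternatively, the Jacobson radical $\rad(K_\sigma N)$ is the same whether computed over $k$ or over $K$ because $\rad$ is intrinsic to the ring. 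Either way $K_\sigma N$ is a semisimple $K$-algebra, and Maschke applies.

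Finally, since $K$ is a field extension of $k$, we have $\car(K)=\car(k)$, so $\car(k)\nshortmid|N|$, which is the ``in particular'' assertion. The main (and only real) obstacle is the bookkeeping about over which field semisimplicity is taken, i.e.\ making sure that ``$k^S$-Galois object $\Rightarrow$ semisimple $k$-algebra'' translates to ``semisimple $K$-algebra'' so that the converse Maschke theorem over $K$ can be invoked; this is handled by separability of $K/k$ or by the radical being intrinsic. Everything else is a direct chain of citations to results already established in the excerpt.
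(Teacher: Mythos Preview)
Your proof is correct, but it follows a different route than the paper's.

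The paper argues directly from the Galois condition: by Corollary~\ref{prop teorema takeuchi importante} the algebra $K_\sigma N$ is a $K^N$-Galois object, so $\theta:K_\sigma N\sedi N\to\End(K_\sigma N)$ is an isomorphism. Applying $\theta$ to the nonzero element $\sum_{y\in N}u_e\# y$ and evaluating on the basis $\{u_x\}$ gives $\sum_{y}u_yu_xu_y^{-1}$, which lies in the center $K$ and hence vanishes for $x\neq e$, while for $x=e$ it equals $|N|u_e$. Injectivity of $\theta$ then forces $|N|\neq 0$ in $K$.

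Your argument instead combines two softer facts already in the paper: Corollary~\ref{algebra de galois semisimple} (any $k^G$-Galois object is semisimple) and the ``only if'' direction of Maschke's theorem for twisted group algebras. This is shorter and more conceptual, and your handling of the one genuine subtlety---that semisimplicity is a property of the ring, hence independent of whether one views $K_\sigma N$ over $k$ or over $K$---is correct. The trade-off is that you are relying on the converse Maschke statement from \cite{karpi}, whereas the paper's computation is self-contained and in fact reproves that converse in this special case via the explicit central element $\sum_y u_yu_xu_y^{-1}$. (Your opening dimension count is harmless but unnecessary; you correctly identify that semisimplicity is the operative input.)
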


\begin{proof}
By the Corollary \ref{prop teorema takeuchi importante}, the algebra $K_\sigma N$ is a $K^N$-Galois object, so the map $\theta:K_\sigma N\sedi N\to \End(K_\sigma N)$ is an isomorphism. Note that $$\theta(\sum_{y\in N}u_e\# y)(u_x)=\sum_{y\in N}u_yu_xu_y^{-1}$$ lies in the center of the twisted group algebra. Then $\theta(\sum_{y\in N}u_e\# y)(u_x)=0$ if $x\neq e$ and $\theta(\sum_{y\in N}u_e\# y)(u_e)=|N|u_e$. Hence, $|N|\neq 0.$
\end{proof}

\begin{teor}\label{Teorema casiPrincipal}
Let $S$ be a finite group, $k$ be  a field, and $A$ be a simple $S$-algebra. Then $A$ is a $k^S$-Galois object if and only if there exists $(S,K,N,\s,\gamma)$ a Galois datum associated to $k^S$,  such that
$$A\simeq A(K_\sigma N,\gamma),$$ as $S$-algebras.
\end{teor}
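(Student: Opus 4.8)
\textbf{Proof proposal for Theorem \ref{Teorema casiPrincipal}.}

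The plan is to prove the two implications separately, using the structural results already established for simple $k^S$-Galois objects and the converse construction of Proposition \ref{prop dato implica algebra de galois simple}. The ``if'' direction is essentially done: if $(S,K,N,\s,\gamma)$ is a Galois datum associated to $k^S$, then Proposition \ref{prop dato implica algebra de galois simple} shows that $A(K_\sigma N,\gamma)$ is a simple $k^S$-Galois object, so $A\simeq A(K_\sigma N,\gamma)$ is one too. The real content is the ``only if'' direction, and the strategy is to extract a Galois datum from the internal structure of $A$.

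So assume $A$ is a simple $S$-algebra that is a $k^S$-Galois object. First I would pass to its center $K=\mathcal{Z}(A)$, which is a field since $A$ is simple artinian (Corollary \ref{algebra de galois semisimple} gives semisimplicity, simplicity is assumed), and set $N=\{x\in S\mid x\accion\alpha=\alpha\ \forall\alpha\in K\}$, the kernel of the action $S\to\Aut_k(K)$; this $N$ is automatically normal in $S$. By Corollary \ref{prop teorema takeuchi importante}, $A$ being a $k^S$-Galois object forces $A$ to be a $K^N$-Galois object and $K$ to be a $k^{S/N}$-Galois object; in particular $K\supseteq k$ is a Galois extension with group $S/N$, giving item $(ii)$ of Definition \ref{defini intro}. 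Next, Proposition \ref{prop A simeq K_sigma N como algebras} applied to the $k^G$-Galois object structure (with $G$ replaced by $S$) identifies $A$ with a twisted group algebra $K_\sigma N$ for a non-degenerate 2-cocycle $\sigma\in Z^2(N,K^*)$, which is item $(iv)$; note the hypotheses of that proposition are met since $A$ is central simple over $K$. The characteristic condition $(iii)$, char$(k)\nshortmid |N|$, then follows from Lemma \ref{lema K_sigma N galois entoces char no divide orden}. Finally, having fixed the $K$-basis $(u_x)_{x\in N}$ of $A=K_\sigma N$, the $S$-action on $A$ restricted to the homogeneous components defines, via $g\accion u_x=\gamma(g,x)u_{\-^gx}$, the function $\gamma:S\times N\to K^*$, and the Proposition preceding Notation \ref{Remark N abeliano} shows $\gamma$ satisfies \eqref{condicion 1 gamma}, \eqref{condicion 2 gamma}, \eqref{condicion 3 gamma}, which is item $(v)$. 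Assembling these, $(S,K,N,\sigma,\gamma)$ is a Galois datum associated to $k^S$, and by construction $A\simeq A(K_\sigma N,\gamma)$ as $S$-algebras, since both the multiplication and the $S$-action have been matched on the chosen basis.

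The main obstacle is verifying that the pieces assembled this way genuinely form a \emph{single coherent} Galois datum — in particular, that Proposition \ref{prop A simeq K_sigma N como algebras} is legitimately applicable. That proposition is stated for a $k^G$-Galois object that is central simple over $K$; here one must be careful that the abstract $k^S$-Galois object $A$ really does carry the hypothesis ``central simple over $K$'' with $K=\mathcal{Z}(A)$, which it does precisely because $A$ is simple artinian. One should also check that the $\gamma$ read off from the $S$-action on the $u_x$ is well-defined and $K^*$-valued (it is, by item $(iv)$ of Proposition \ref{proposicion propiedades graduacion}, since $g\accion A_x=A_{\-^gx}=Ku_{\-^gx}$ and the $S$-action is by algebra automorphisms hence invertible on each one-dimensional-over-$K$ component), and that no compatibility is lost when we choose the field isomorphism $f|_K$; but these are routine once the structure theorems are invoked. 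The rest is bookkeeping: matching the datum-defined $S$-algebra $A(K_\sigma N,\gamma)$ against $A$ on the explicit basis.
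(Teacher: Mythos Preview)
Your proof is correct and follows essentially the same route as the paper's own argument: extract $K=\mathcal{Z}(A)$ and $N$ as the stabilizer of $K$, apply Proposition~\ref{prop A simeq K_sigma N como algebras} to obtain the twisted group algebra structure with non-degenerate $\sigma$, invoke Lemma~\ref{lema K_sigma N galois entoces char no divide orden} for the characteristic condition, and read off $\gamma$ from the $S$-action on the basis $(u_x)$. You are in fact slightly more careful than the paper, which omits the explicit verification of item $(ii)$ (that $K\supseteq k$ is Galois with group $S/N$) and the ``if'' direction; your citations of Corollary~\ref{prop teorema takeuchi importante} and Proposition~\ref{prop dato implica algebra de galois simple} fill those in correctly.
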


\begin{proof}
Suppose that $A$ is a $k^S$-Galois object. Let $K=\mathcal{Z}(A)$ and
$N=\{g\in S|g\accion \alpha=\alpha \ \ \forall \alpha\in K\}$.  $A$ is isomorphic to the twisted group algebra $K_\sigma N$, where $\sigma$ is non-degenerate (see Proposition \ref{prop A simeq K_sigma N como algebras}). Now, char$(k)\nshortmid |N|$ by Lemma \ref{lema K_sigma N galois entoces char no divide orden}. The function $\gamma$ defined by the equation $$g\accion \alpha u_x=\bar{g}(\alpha)\gamma(g,x)u_{\-^gx},\quad \  \  \  (g\in S, x\in N, \alpha\in K),$$ completes a Galois datum $(S,K,N,\s,\gamma)$, such that $A\simeq A(K_\sigma N,\gamma)$.
\end{proof}

Now our main result follows immediately from our previous results.

\begin{proof}[Proof of Theorem \ref{main result}]
It follows from Theorem \ref{Teorema casiPrincipal}, and Theorem \ref{teor principal cap 2}.
\end{proof}

\subsection{Two families of Examples}

Let $G=\mathbb{Z}_n\oplus\mathbb{Z}_n\oplus\mathbb{Z}_n$ and $N=\{0\}\oplus \mathbb{Z}_n\oplus\mathbb{Z}_n\subseteq G$ with $n$ an odd prime. Considerer the fields $k=\mathbb{Q}[\zeta]$ and $K=\mathbb{Q}[q]$, where $q=e^{\frac{2\pi i}{n^2}}$ is a primitive $n$th root of unity and $\zeta= q^n$. Now, define a 2-cocycle $\s:N\times N\to k^*$ of $N$ with values in $\langle\zeta\rangle^*\subseteq k^*$, by $$\s(x,y)=\zeta^{x_2y_1-x_1y_2}\quad (x=(x_1,x_2), y=(y_1,y_2)\in N).$$ Since
$\{0\}=\{x\in N|\s(x,y)=\s(y,x)\ \forall y\in N\},$
thus $\s$ is a non-degenerate 2-cocycle.

In this case, by the Remark \ref{Remark N abeliano} a function $\g:G\times N\to k^*\subseteq K^*$ satisfies the conditions \eqref{condicion 1 gamma}, \eqref{condicion 2 gamma} and \eqref{condicion 3 gamma} if and only if $\g$ is a pairing such that $\g|_{N\times N}=\mathrm{Alt}_\s=\s^2$. Hence, it is easy to see that the collection $(G,K,N,\s,\g)$ is a Galois datum associated to $k^G$. Moreover, if $\g'$ is another paring such that $\g'|_{N\times N}=\s^2$, then the Galois data $(G,K,N,\s,\g)$ and $(G,K,N,\s,\g')$ are equivalent if and only if $\g=\g'$.

We can obtain other family of Galois data taking $G$, $N$, $k$, and $K$ as above, and defining the 2-cocycle $\s':N\times N\to k^*$ by
$$\s'(x,y)=\zeta^{x_2y_1}\quad (x=(x_1,x_2), y=(y_1,y_2)\in N),$$
and  a pairing $\g:G\times N\to k^*$, such that
$$\g(x,y)=\mathrm{Alt}_{\s'}(x,y)=\zeta^{x_2y_1-x_1y_2},$$
for all $x,y\in N$.

\section{Obstruction theory for Galois data}\label{seccion obstruccion}
The main result of this section is to present some necessaries conditions for the existence of a function $\gamma:G\times N\to K^*$, such that $(S,N,K,\sigma, \gamma)$ is a Galois datum associated to $k^G$.
\subsection{Hochschild cohomology for groups}

We briefly recall the well-known Hochschild cohomology for the algebra $\mathbb ZG$.

Let $G$ be a group and $A$ be a $G$-bimodule. Define $C^0(G,A)=A$, and for $n\geq 1$ $$C^n(G,A)=\{f:\underbrace{G\times\cdots \times G}_{n-times}\to A| f(x_1\ldots,x_n)=0, \text{ if } x_i=1_G \text{ for some }i \}.$$

Considerer the following cochain complex
\begin{equation*}\label{complex}
0 \longrightarrow C^0 (G, A) \stackrel{d_0}{\longrightarrow }
C^1 (G, A) \stackrel{d_1}{\longrightarrow }C^2 (G, A) \cdots C^{n} (G, A)
\stackrel{d_n}{\longrightarrow } C^{n+1} (G, A) \cdots
\end{equation*} where
\begin{align*}
    d_n(f)(x_1,x_2,\ldots,x_{n+1})&=x_1\accion f(x_2,\ldots,x_{n+1})\\
    &+\sum_{i=1}^n(-1)^{i}f(x_1,\ldots,x_{i-1},x_ix_{i+1},x_{i+2},\ldots,x_{n+1})\\
    &+(-1)^{n+1}f(x_1,\ldots,x_{n})\acciond x_{n+1}.
\end{align*}
Then as usual define $HZ^n(G,A):=\ker(d_n)$, $HB^n(G,A):= \text{Im}(d_{n-1})$ and $HH^n(G,A):=HZ^n(G,A)/HB^n(G,A)$ ($n\geq 0$) the Hochschild cohomology of $G$ with coefficients in $A$.

\begin{obs}
A left $G$-module $A$, is a $G$-bimodule with the trivial right action, in this case the differential maps will be denoted by $\delta_n$, and the Hochschild Cohomology of $G$ with coefficients in $A$, is the ordinary group cohomology.
\end{obs}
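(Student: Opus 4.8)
The statement to establish is that when a left $G$-module $A$ is regarded as a $G$-bimodule via the trivial right action $a \acciond g := a$, the cochain complex $(C^\bullet(G,A), d_\bullet)$ defining $HH^\bullet(G,A)$ coincides with the normalized bar complex computing the ordinary group cohomology of $G$ with coefficients in $A$, so that $HH^n(G,A) = H^n(G,A)$ for all $n \ge 0$.

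The plan is essentially a substitution followed by a comparison of formulas. First I would plug $f(x_1,\dots,x_n) \acciond x_{n+1} = f(x_1,\dots,x_n)$ into the displayed expression for $d_n(f)(x_1,\dots,x_{n+1})$: the leading term $x_1 \accion f(x_2,\dots,x_{n+1})$ and the alternating sum $\sum_{i=1}^{n} (-1)^i f(x_1,\dots,x_i x_{i+1},\dots,x_{n+1})$ are untouched, while the last term collapses from $(-1)^{n+1} f(x_1,\dots,x_n) \acciond x_{n+1}$ to $(-1)^{n+1} f(x_1,\dots,x_n)$. The operator obtained this way is, verbatim, the standard coboundary $\delta_n$ of group cohomology for the module $A$. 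Hence the two complexes are not merely quasi-isomorphic but literally equal: the same underlying abelian groups $C^n(G,A)$ and the same differentials. Passing to kernels and images then gives $HZ^n = \ker \delta_n$, $HB^n = \operatorname{Im} \delta_{n-1}$, and therefore $HH^n(G,A) = H^n(G,A)$.

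The only point deserving a word of care is the normalization convention: $C^n(G,A)$ has been defined to consist of cochains vanishing whenever some argument equals $1_G$, which is the convention of the normalized bar resolution rather than the unnormalized one. I would dispatch this by invoking the classical fact that the inclusion of the normalized subcomplex into the full cochain complex of $G$ is a quasi-isomorphism, so that ``ordinary group cohomology'' computed from normalized cochains agrees with $\operatorname{Ext}^\bullet_{\mathbb{Z}G}(\mathbb{Z},A)$; the coefficient module is, as usual, the abelian group $A$ with its given left $G$-action, the right action playing no role once it has been trivialized. I do not expect any genuine obstacle here: the entire content is the observation that trivializing the right action degenerates the Hochschild differential to the group-cohomology differential, the sole remaining subtlety being the well-known, harmless normalization issue.
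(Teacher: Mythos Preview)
Your argument is correct. The paper states this observation as a remark without proof, so there is no ``paper's own proof'' to compare against; the authors simply assert the well-known fact and move on. Your justification---substituting the trivial right action into the Hochschild differential to recover the standard group-cohomology coboundary, and noting that the normalized cochains compute the same cohomology as the unnormalized ones---is exactly the routine verification one would supply if asked to expand the remark, and it is sound.
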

Let $G$ be a finite group, $N$ a normal subgroup of $G$, $K$ a Galois extension of $k$
with Galois group $G/N$, and $\sigma\in Z^2(N,K^*)$ a non-degenerate 2-cocycle.

Thus the abelian group $$C^1(N,K^*)=\{f:N\to K^*|f(e)=1\},$$
is a $G$-bimodule with left action
\begin{equation}\label{ecuacion accion de C^1 a izquierda}
    (g\accion f)(x)=\bar{g}(f(x)),
\end{equation}
and right action
\begin{equation}\label{ecuacion accion de C^1 a derecha}
    (f\acciond g)(x)=f(\-^gx)   \  \  \  (\-^gx=gxg^{-1}),
\end{equation}
for all $g\in G, x\in N$, and  $f\in C^1(N,K^*)$.

By abuse of notation we will identify to a function $\gamma: G\to C^1(N,K^*)$ with its associated function $\gamma: G\times N\to K^*,$ and vice versa.

\subsection{Obstructions}

The abelian groups $C^n(N,K^*), Z^n(G,K^*)$ are $G$-bimodules with left action $(g\cdot \sigma)(x_1,\ldots,x_n)=\overline{g}(\sigma(x_1,\ldots,x_n))$, and right action $(\sigma^g)(x_1,\ldots,x_n)= \sigma(^gx_1,\ldots,^gx_n)$. Analogously, the abelian group $\Hom(N,K^*)=\widehat{N}$ is a $G$-bimodule.

\begin{prop}[First obstruction]
There exists a function $\gamma:G\times N\to K^*$ that satisfied \eqref{condicion 2 gamma}
if and only if the second cohomology class of $\frac{g\cdot\sigma}{\sigma^g}$ is zero for all $g\in G$.
\end{prop}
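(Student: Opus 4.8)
The statement concerns condition \eqref{condicion 2 gamma}, which reads
$$\bar{g}(\sigma(x,y))\gamma(g,xy)=\sigma(\-^gx,\-^gy)\gamma(g,x)\gamma(g,y).$$
The plan is to read this, for each fixed $g\in G$, as a statement about $2$-cocycles on $N$ with values in $K^*$. Fix $g$. Define $\tau_g\in C^2(N,K^*)$ by $\tau_g(x,y)=\dfrac{(g\cdot\sigma)(x,y)}{\sigma^g(x,y)}=\dfrac{\bar g(\sigma(x,y))}{\sigma(\-^gx,\-^gy)}$. First I would check that $\tau_g$ is a $2$-cocycle on $N$: both $g\cdot\sigma$ and $\sigma^g$ are $2$-cocycles on $N$ (the first because $\bar g$ is a field automorphism fixing cocycles, the second because conjugation by $g$ is an automorphism of $N$, $N$ being normal), and the quotient of two $2$-cocycles is a $2$-cocycle since $Z^2(N,K^*)$ is an abelian group. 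So $\tau_g\in Z^2(N,K^*)$ for every $g$.

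The key observation is then that equation \eqref{condicion 2 gamma}, rewritten, says precisely $\gamma(g,xy)=\tau_g(x,y)^{-1}\gamma(g,x)\gamma(g,y)$, or equivalently
$$\tau_g(x,y)=\frac{\gamma(g,x)\gamma(g,y)}{\gamma(g,xy)}.$$
The right-hand side is exactly $-\delta_1(\gamma(g,-))(x,y)$ in multiplicative notation, where $\gamma(g,-):N\to K^*$ is viewed as a $1$-cochain on $N$ (recall $\gamma(g,-)\in C^1(N,K^*)$ since $\gamma(g,e)=1$, which follows from the normalization built into the setup). Hence: a function $y\mapsto\gamma(g,y)$ on $N$ satisfying \eqref{condicion 2 gamma} for this fixed $g$ exists if and only if $\tau_g$ is a coboundary, i.e.\ $\tau_g\in B^2(N,K^*)$, i.e.\ the cohomology class $[\tau_g]\in H^2(N,K^*)$ vanishes. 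Running this equivalence over all $g\in G$ gives the proposition: a single function $\gamma:G\times N\to K^*$ satisfying \eqref{condicion 2 gamma} exists iff for every $g\in G$ the class of $\dfrac{g\cdot\sigma}{\sigma^g}$ in $H^2(N,K^*)$ is zero.

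For the write-up I would do the forward direction first (given $\gamma$, solve for $\tau_g$ as a coboundary, exhibiting the explicit $1$-cochain $\gamma(g,-)$), then the converse (for each $g$ pick a $1$-cochain $\lambda_g$ on $N$ with $\delta_1\lambda_g=\tau_g^{\pm 1}$ and set $\gamma(g,x):=\lambda_g(x)$; one must only note that these choices can be made independently for distinct $g$, so no compatibility across different $g$ is needed at this stage — that is what conditions \eqref{condicion 1 gamma} and \eqref{condicion 3 gamma} will handle later). The only mildly delicate point, and the one I'd be most careful about, is sign/convention bookkeeping: making sure the Hochschild differential $\delta_1$ on $C^1(N,K^*)$ with the left $N$-action induced by $\sigma$-conjugation matches the way \eqref{condicion 2 gamma} is arranged, and that the normalization $\gamma(g,e)=1$ is consistent with $\gamma(g,-)\in C^1(N,K^*)$. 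These are routine but must be stated correctly; no genuine obstacle beyond that.
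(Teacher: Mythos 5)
Your argument is correct and is exactly the content the paper leaves implicit behind its one-line proof (``It follows immediately by the condition (C2)''): for each fixed $g$, equation \eqref{condicion 2 gamma} says precisely that $\frac{g\cdot\sigma}{\sigma^g}$ equals the coboundary of the $1$-cochain $\gamma(g,-)\in C^1(N,K^*)$ (the $N$-action on $K^*$ being trivial), and the choices for distinct $g$ are independent, so the equivalence holds $g$ by $g$. One minor bookkeeping point: the right-hand side $\gamma(g,x)\gamma(g,y)/\gamma(g,xy)$ is $+\delta_1\bigl(\gamma(g,-)\bigr)(x,y)$ rather than its inverse, but since $B^2(N,K^*)$ is a subgroup this sign ambiguity, which you flagged yourself, is immaterial.
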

\begin{proof}
It follows immediately by the condition   \eqref{condicion 2 gamma}.
\end{proof}

Suppose that the first obstruction is zero, then for all $g\in G$ there exists $\gamma_g:N\to K^*$, such that $$\delta_1(\gamma_g)=\frac{g\cdot\sigma}{\sigma^g}\in B^2(N,K^*),$$ it defines a function $\gamma: G\to C^1(N,K^*)$, $g\mapsto \g_g$.

\begin{lem}
For all $g,h\in G$, $d_1(\gamma)(g,h)\in \widehat{N}$, the function $d_1(\gamma):G\times G\to \widehat{N}$ is a Hochschild 2-cocycle of $G$ with values in $\widehat{N}$.
\end{lem}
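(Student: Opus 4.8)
The statement to prove is that, given $\gamma\colon G\to C^1(N,K^*)$ chosen so that $\delta_1(\gamma_g)=\frac{g\cdot\sigma}{\sigma^g}$ for every $g$, the map $d_1(\gamma)\colon G\times G\to C^1(N,K^*)$ actually takes values in $\widehat N=\Hom(N,K^*)$ and is a Hochschild $2$-cocycle there. I would split this into two independent claims. First, the landing-in-$\widehat N$ claim: compute $\delta_1$ of $d_1(\gamma)(g,h)$ and show it vanishes, so that $d_1(\gamma)(g,h)$ is a $1$-cocycle of $N$ with trivial action, hence a homomorphism $N\to K^*$. Second, the cocycle claim: apply $d_2$ to $d_1(\gamma)$ and use $d_2\circ d_1=0$ for the Hochschild complex of $G$ with coefficients in the $G$-bimodule $C^1(N,K^*)$; since $d_1(\gamma)$ already lands in the sub-bimodule $\widehat N$ and $d_2$ is compatible with this inclusion, $d_2(d_1(\gamma))=0$ in $C^3(G,\widehat N)$ as well.

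\medbreak
For the first claim I would exploit that $\delta_1$ (the differential for the $N$-complex with coefficients in $K^*$, trivial right action) commutes appropriately with the two $G$-actions on $C^1(N,K^*)$. Concretely, writing out $d_1(\gamma)(g,h)=(g\accion\gamma_h)\cdot\gamma_g\cdot(\gamma_{gh})^{-1}$ (using multiplicative notation and recalling the bimodule actions \eqref{ecuacion accion de C^1 a izquierda}, \eqref{ecuacion accion de C^1 a derecha}), I would apply $\delta_1$ term by term. One needs: $\delta_1(g\accion\gamma_h)=g\cdot\delta_1(\gamma_h)=g\cdot\frac{h\cdot\sigma}{\sigma^h}=\frac{gh\cdot\sigma}{(g\cdot\sigma)^h}$; $\delta_1(\gamma_g)=\frac{g\cdot\sigma}{\sigma^g}$; and $\delta_1(\gamma_{gh})=\frac{gh\cdot\sigma}{\sigma^{gh}}$. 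Here I must be careful: the right action on $C^1(N,K^*)$ in \eqref{ecuacion accion de C^1 a derecha} is conjugation of the argument, so $\delta_1$ of $\gamma_g\acciond h$ equals $(\delta_1\gamma_g)\acciond h$ only after tracking how conjugation interacts with the $\delta_1$ formula — this is the one spot where a genuine (short) computation is unavoidable. Multiplying the three contributions, the $\sigma$-terms should telescope: $\frac{gh\cdot\sigma}{(g\cdot\sigma)^h}\cdot\frac{g\cdot\sigma}{\sigma^g}\cdot\frac{\sigma^{gh}}{gh\cdot\sigma}$, and using $(\sigma^g)^h=\sigma^{gh}$ together with $(g\cdot\sigma)^h = ?$ — precisely, one needs the identity relating $(g\cdot\sigma)^h$ to $gh\cdot\sigma$ modulo the $\sigma^g$ terms, which follows because both actions are by (anti)automorphisms of $K^*$ and conjugation in $G$. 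The product collapses to $1$, so $\delta_1(d_1(\gamma)(g,h))=1$ and $d_1(\gamma)(g,h)\in HZ^1(N,K^*)=\widehat N$.

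\medbreak
For the second claim, the cleanest route is formal: $\gamma\in C^1(G,C^1(N,K^*))$, so $d_1(\gamma)\in C^2(G,C^1(N,K^*))$, and $d_2(d_1(\gamma))=0$ automatically since the composite of consecutive differentials in any Hochschild complex is zero. Then I invoke the first claim: $d_1(\gamma)$ factors through the $G$-sub-bimodule $\widehat N\hookrightarrow C^1(N,K^*)$ (one must check $\widehat N$ is indeed a sub-bimodule under \eqref{ecuacion accion de C^1 a izquierda}, \eqref{ecuacion accion de C^1 a derecha} — it is, since $\bar g$ is a field automorphism so sends homomorphisms to homomorphisms, and conjugation by $g$ is a group automorphism of $N$). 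Because the $G$-Hochschild differentials are functorial in the coefficient bimodule, the equation $d_2(d_1(\gamma))=0$ valid in $C^3(G,C^1(N,K^*))$ restricts to the same equation in $C^3(G,\widehat N)$. Hence $d_1(\gamma)\in HZ^2(G,\widehat N)$, as claimed.

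\medbreak
\textbf{Main obstacle.} The only real difficulty is bookkeeping: getting the interaction between $\delta_1$ (the $N$-differential) and the \emph{right} $G$-action $f\acciond g = f(\-^gx)$ exactly right, since conjugation reshuffles arguments and one must verify that $\delta_1$ is $G$-bi-equivariant before the telescoping in the first claim goes through. Once that equivariance is recorded, everything else is formal. I would state the equivariance of $\delta_1$ as a small sub-lemma (or inline remark) and then let the telescoping and the $d_2\circ d_1=0$ identity finish the proof.
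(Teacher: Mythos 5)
Your proposal is correct and follows essentially the same route as the paper: the paper's proof is exactly your first claim, applying $\delta_1$ factor by factor to $d_1(\gamma)(g,h)=(g\accion\gamma_h)(\gamma_{gh})^{-1}(\gamma_g\acciond h)$ and telescoping via the equivariance of $\delta_1$ with respect to both $G$-actions, while the cocycle condition itself is left to the formal $d_2\circ d_1=0$ argument you give (the paper does not even write this part out). One bookkeeping correction to your displayed product: the middle factor must be $\delta_1(\gamma_g\acciond h)=(\delta_1\gamma_g)\acciond h=(g\cdot\sigma)^h/(\sigma^g)^h$ rather than $(g\cdot\sigma)/\sigma^g$; using $(\sigma^g)^h=\sigma^{gh}$ and $(g\cdot\sigma)^h=g\cdot\sigma^h$, the three factors then cancel exactly as in the paper's displayed computation.
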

\begin{proof}
If $g,h\in G$, then
\begin{align*}
\delta_1(d_1(\gamma)(g,h))&=\delta_1[(g\cdot \gamma_{h})(\gamma_{gh})^{-1}(\gamma_{g}^{h})]\\
&=\left(g\cdot \frac{h\cdot \sigma}{\sigma^{h}}\right)\left(\frac{\sigma^{gh}}{gh\cdot \sigma}\right)\left(\frac{g\cdot \sigma^h}{(\sigma^g)^h}\right)\\
&=1.
\end{align*}
\end{proof}

A straightforward computation shows that if $\gamma':G \to C^1(N,K^*)$, $g\mapsto\g'_g$, is another function such that $\delta_1(\gamma'_g)=\frac{g\cdot\sigma}{\sigma^g}$ for all $g\in G$, then the Hochschild 2-cocycles $d_1(\gamma')$ and $d_1(\gamma)$ are cohomologous.

\begin{prop}[Second obstruction]
 There exists a function $\gamma:G\times N\to K^*$ that satisfied \eqref{condicion 3 gamma} if and only if the second Hochschild cohomology class of $d_1(\gamma)$ is zero.
\end{prop}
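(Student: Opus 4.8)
The plan is to argue exactly as in the first obstruction, but one level up in the Hochschild complex of $G$. Recall that if the first obstruction vanishes we have chosen, for each $g\in G$, an element $\gamma_g\in C^1(N,K^*)$ with $\delta_1(\gamma_g)=\frac{g\cdot\sigma}{\sigma^g}$, and that the preceding lemma shows $d_1(\gamma)\in HZ^2(G,\widehat N)$, with cohomology class independent of the choice of the $\gamma_g$'s. I claim that the existence of some $\gamma$ satisfying \eqref{condicion 3 gamma} (simultaneously with the already-fixed requirement that $\delta_1(\gamma_g)$ has the prescribed value, which is \eqref{condicion 2 gamma}) is equivalent to the vanishing of $[d_1(\gamma)]\in HH^2(G,\widehat N)$.

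First I would set up the bookkeeping. Any candidate $\gamma':G\to C^1(N,K^*)$ that satisfies \eqref{condicion 2 gamma} must, by the computation in the text, have $\delta_1(\gamma'_g)=\frac{g\cdot\sigma}{\sigma^g}=\delta_1(\gamma_g)$; hence $\mu_g:=\gamma'_g(\gamma_g)^{-1}$ lies in $\ker\delta_1=\widehat N$. Conversely every such $\gamma'$ arises this way from a function $\mu:G\to\widehat N$ (with $\mu_{1}=1$, i.e.\ $\mu\in C^1(G,\widehat N)$). The point is then to translate condition \eqref{condicion 3 gamma} for $\gamma'$ into a condition on $\mu$. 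A direct substitution — using that $d_1$ is the group-cohomology differential on $C^1(G,-)$ valued in the $G$-bimodule $\widehat N$, and that $d_1$ is multiplicative — gives
\begin{equation*}
d_1(\gamma')(g,h)=d_1(\mu\gamma)(g,h)=d_1(\mu)(g,h)\,d_1(\gamma)(g,h),
\end{equation*}
so $\gamma'$ satisfies \eqref{condicion 3 gamma}, i.e.\ $d_1(\gamma')=1$, precisely when $d_1(\mu)=d_1(\gamma)^{-1}$, that is, when $d_1(\gamma)$ is a Hochschild $2$-coboundary. Thus a suitable $\gamma'$ exists if and only if $[d_1(\gamma)]=0$ in $HH^2(G,\widehat N)$.

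I would then check the two directions cleanly. For the ``only if'' direction: given $\gamma$ satisfying both \eqref{condicion 2 gamma} and \eqref{condicion 3 gamma}, the element $\mu_g=\gamma_g(\tilde\gamma_g)^{-1}$ relating it to the fixed reference family $\tilde\gamma$ satisfies $d_1(\mu)=d_1(\gamma)\,d_1(\tilde\gamma)^{-1}$; since $d_1(\gamma)=1$ we get $d_1(\tilde\gamma)=d_1(\mu)^{-1}\in HB^2(G,\widehat N)$, and because $[d_1(\tilde\gamma)]$ does not depend on the chosen reference family, the class is zero. For the ``if'' direction: if $[d_1(\gamma)]=0$ pick $\mu\in C^1(G,\widehat N)$ with $d_1(\mu)=d_1(\gamma)^{-1}$ and set $\gamma'_g=\mu_g\gamma_g$; then $\gamma'$ still satisfies \eqref{condicion 2 gamma} (since $\mu_g\in\widehat N=\ker\delta_1$) and now $d_1(\gamma')=1$, i.e.\ \eqref{condicion 3 gamma} holds.

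The only genuinely delicate point is verifying the identity $d_1(\mu\gamma)=d_1(\mu)\,d_1(\gamma)$ at the level of $\widehat N$-valued functions — that is, that the Hochschild $1$-differential is a homomorphism on those cochains whose $d_1$ already lands in $\widehat N$, once we pass to values in the $G$-bimodule $\widehat N$ with the actions described before the statement. This is a routine but slightly fiddly manipulation with the formula $d_1(f)(g,h)=(g\accion f_h)\,f_{gh}^{-1}\,(f_g\acciond h)$ and the fact that on $\widehat N$ the two actions are by group automorphisms; I expect this to be the main (still mild) obstacle, and I would relegate it to a one-line ``straightforward computation'' as the paper does elsewhere. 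Everything else is formal cohomological bookkeeping.
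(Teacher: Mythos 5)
Your proposal is correct and follows essentially the same route as the paper: both directions translate condition \eqref{condicion 3 gamma} into the vanishing $d_1(\gamma)=1$ and use a correcting cochain $\mu\in C^1(G,\widehat N)$ (the paper's $\theta$) to adjust a reference family, relying on the multiplicativity of $d_1$ and the independence of the class $[d_1(\gamma)]$ from the choice of lifts, which the paper records in the remark just before the proposition. Your version merely spells out the bookkeeping that the paper leaves implicit.
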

\begin{proof}
If there is a function $\gamma: G\times N\to K^*$, that satisfies \eqref{condicion 3 gamma}, then $d_1(\gamma)=0$, so its cohomology is zero too.

Conversely, if $\gamma\in C^1(G, C^1(N,K^*))$ is a function, such that there exists  $\theta\in C^1(G, \widehat{N})$ with $d_1(\theta)= d_1(\gamma)$, then the function $\overline{\gamma}:G\times N\to K^*$ defined by $\overline{\gamma}(g,n)= \theta_g(n)^{-1}\gamma(g,n)$ for all $g\in G, n\in N$, satisfies  \eqref{condicion 3 gamma}.
\end{proof}


\end{document}